\newtheorem{theorem}{Theorem}
\newtheorem{definition}[theorem]{Definition}
\newtheorem{proposition}[theorem]{Proposition}
\newtheorem{lemma}[theorem]{Lemma}
\theoremstyle{remark}
\newtheorem{remark}[theorem]{Remark}
\newcommand*{\rom}[1]{\expandafter\@slowromancap\romannumeral #1@}
\newcommand{\BR}{\mathbb{R}}
\newcommand{\BZ}{\mathbb{Z}}
\newcommand{\vn}{{\mathbf{n}}}
\newcommand{\CB}{\mathcal{B}}
\newcommand{\CF}{\mathcal{F}}
\newcommand{\CH}{\mathcal{H}}
\newcommand{\CK}{\mathcal{K}}
\newcommand{\CM}{\mathcal{M}}
\newcommand{\FH}{\mathfrak{H}}
\newcommand{\FS}{\mathfrak{S}}
\newcommand{\ga}{\gamma}
\newcommand{\de}{\delta}
\newcommand{\ep}{\varepsilon}
\newcommand{\ph}{\varphi}
\newcommand{\rh}{\rho}
\newcommand{\De}{\Delta}
\newcommand{\Ph}{\Phi}
\newcommand{\wt}{\widetilde}
\newcommand{\wh}{\widehat}
\newcommand{\mx}{{\rm max}}
\newcommand{\Ck}[1]{\left\{#1\right\}}
\newcommand{\K}[1]{\left(#1\right)}
\newcommand{\I}{\infty}
\newcommand{\sd}{\langle \nabla \rangle}
\newcommand{\lxr}{\langle \xi \rangle}
\newcommand{\st}{\star}
\newcommand{\R}{\mathbb{R}}
\newcommand{\tw}{\frac{1}{2}}
\newcommand{\na}{\nabla}
\newcommand{\ls}{\lesssim}
\newcommand{\II}{\mathds{1}}
\newcommand{\lm}{\lambda}
\definecolor{light-gray1}{gray}{0.90}
\definecolor{light-gray2}{gray}{0.80}
\definecolor{light-gray3}{gray}{0.60}
\numberwithin{equation}{section}
\numberwithin{theorem}{section}
\numberwithin{table}{section}
\numberwithin{figure}{section}
\title[GWP of NLH for infinitely many particles]{Global well-posedness of the Nonlinear Hartree equation for infinitely many particles with singular interaction}
\date{\today}
\author[S. Hadama]{Sonae Hadama}
\address{Research Institute for Mathematical Sciences, Kyoto University, Kita-Shirakawa, Sakyo-ku, Kyoto, Japan 606-8502.}
\email{hadama@kurims.kyoto-u.ac.jp}
\author[Y. Hong]{Younghun Hong}
\address{Department of Mathematics, Chung-Ang University, Seoul 06974, Korea}
\email{yhhong@cau.ac.kr}
\begin{document}

\begin{abstract}
The nonlinear Hartree equation (NLH) in the Heisenberg picture admits steady states of the form $\gamma_f=f(-\Delta)$ representing quantum states of infinitely many particles. In this article, we consider the time evolution of perturbations from a large class of such steady states via the three-dimensional NLH. We prove that if the interaction potential $w$ has finite measure and initial states have finite relative entropy, then solutions preserve the relative free energy, and they exist globally in time. This result extends the important work of Lewin and Sabin \cite{LS3} to singular interactions.
\end{abstract}

\maketitle

\section{Introduction}

\subsection{Nonlinear Hartree equation for infinitely many quantum particles}

The nonlinear Hartree equation (NLH) is a fundamental equation describing the mean-field dynamics of a large number of quantum particles. In the Heisenberg picture, by a non-negative self-adjoint operator $\gamma$ on $L^2(\mathbb{R}^3)$, it is given by 
\begin{equation}\label{NLH0}
i\partial_t\gamma=\big[-\Delta+w*\rho_\gamma,\gamma\big],
\end{equation}
 where $[A,B]=AB-BA$ is the Lie bracket. Here, $w$ is a pair interaction potential on $\mathbb{R}^3$, and $\rho_\gamma(x)=\gamma(x,x)$ is the total density of a quantum observable $\gamma$, where $\gamma(x,x')$ denotes the integral kernel of the operator $\gamma$, i.e., 
$$(\gamma \phi)(x)=\int_{\mathbb{R}^{{3}}} \gamma(x,x')\phi(x')dx'.$$
The convolution $w*\rho_\gamma$ represents the self-generated potential which acts as a multiplication operator in \eqref{NLH0}. By the spectral representation $\gamma=\sum_{j=1}^\infty \lambda_j|\phi_j\rangle\langle\phi_j|$ with an orthonormal set $\{\phi_j\}_{j=1}^\infty$ in $L^2(\mathbb{R}^3)$, the NLH \eqref{NLH0} is equivalent to the system of (possibly) infinitely many coupled equations for orthonormal functions,
\begin{equation}\label{NLH0'}
i\partial_t\phi_j=-\Delta \phi_j+\left(w*\sum_{k=1}^\infty\lambda_k|\phi_k|^2\right)\phi_j,\quad \forall j\in\mathbb{N}.
\end{equation}

The NLH \eqref{NLH0} can be used for both boson and fermion particles. When a bosonic system is cooled down to nearly zero temperature, almost all particles occupy the same quantum state, forming a Bose-Einstein condensate. In this case, they can be described by the single wave equation 
\begin{equation}\label{BEC NLH}
i\partial_t\phi=-\Delta \phi+(w*|\phi|^2)\phi.
\end{equation}
For rigorous derivation including the delta interaction, we refer to \cite{ESY1, ESY2, ESY3, ESY4}. However, when the temperature of the total system is not sufficiently low, the multi-particle model must be taken into account \cite{ADS}. On the other hand, for fermions, the Hartree-Fock equation is first derived by the mean-field approximation with the additional exchange term and with the additional condition $0\leq \gamma\leq 1$ (or $0\leq\lambda_j\leq1$), which comes from Pauli's exclusion principle \cite{BJPS, BPS1}. However, the exchange term is sometimes dropped because it is treated as a smaller order term in the semi-classical approximation. In this context, the NLH \eqref{NLH0} is called the \textit{reduced} Hartree-Fock equation. In the finite-trace case, or when the particle number is finite, the global well-posedness of the Hartree(-Fock) equation has established  \cite{1974BDF, 1976BDF, 1976C, 1992Z}. Recently, the global well-posedness and the small data scattering of the more general Kohn-Sham equation is proved \cite{2021PS}.

One of the advantages of considering the operator-valued equation \eqref{NLH0} is that contrary to the multi-particle system \eqref{NLH0'}, it admits steady states of infinitely many particles. Precisely, any Fourier multiplier operator $\gamma_f=f(-\Delta)$, with symbol $f(|\xi|^2)\in L^1(\mathbb{R}^3)$ such that $f \ge 0$ and $f \not\equiv 0$, solves the NLH \eqref{NLH0}, because it has constant density. Moreover, it has infinite particle number, i.e., $\textup{Tr}(\gamma_f)=\int_{\mathbb{R}^d}\rho_{\gamma_f} dx=\infty$.
Such steady states include the following physically important examples.
\begin{enumerate}
\item $\mathds{1}(-\Delta\leq\mu)$ with chemical potential $\mu>0$ (Fermi gas at zero temperature).
\item $\frac{1}{e^{(-\Delta-\mu)/T}+1}$ with $\mu\in\mathbb{R}$ and $T>0$ (Fermi gas at positive temperature).
\item $\frac{1}{e^{(-\Delta-\mu)/T}-1}$ with $\mu<0$ and $T>0$ (Bose gas at positive temperature).
\item $\frac{1}{e^{(-\Delta-\mu)/T}}$ with $\mu\in\mathbb{R}$ and $T>0$ (Boltzmann gas at positive temperature).
\end{enumerate}
Having steady states of infinitely many particles, it is natural to investigate the dynamics of their (possibly large) perturbations, governed by the NLH \eqref{NLH0}. In the celebrated work of Lewin and Sabin \cite{LS2, LS3}, the rigorous mathematical formulation was \textit{first} introduced. Moreover, global well-posedness of large perturbations is proved in the zero/positive temperature \cite{LS3}, and the 2D small data scattering in the positive temperature case was established \cite{LS2}. Since then, the global well-posedness is extended to the singular interaction case in the zero temperature case \cite{CHP1}. In addition, the small scattering is shown in higher dimensions with singular interactions \cite{CHP2, H1}. We also remark that an alternative formulation of \eqref{NLH0} introducing the random field is proposed in de Suzzoni \cite{2015S}. In this direction, small data scattering is given in \cite{2020CS, 2022CS, 2023M, 2023CDSM}. See also the recent work \cite{2021D, HY1}  in different settings.

\subsection{Statement of the main result}
The main goal of this article is to extend the global well-posedness of the NLH \eqref{NLH0} for infinite particles in the positive temperature case to a larger class of singular interaction potentials as well as to a larger class of reference states. For the setup of the problem, following Lewin and Sabin \cite{LS2, LS3}, we fix a reference state of the form 
\begin{equation}\label{gamma f}
\gamma_f=f(-\Delta)\quad\textup{with }f=(S')^{-1},
\end{equation}
where $S$ is admissible in the following sense.

\begin{definition}[$\mathcal{H}$-admissible]\label{def: H-admissible}
A function $S: [0,1]\to\mathbb{R}^+$ is called $\mathcal{H}$-admissible if 
\begin{enumerate}
\item $S$ is continuous on $[0,1]$, and is differentiable on $(0,1)$;
\item $-S'$ is operator monotone on $(0,1)$, and not constant;
\item $\displaystyle\lim_{x\to 0^+}S'(x)=+\infty$ and $\displaystyle\lim_{x\to 1^-}S'(x)\leq0$;
\item $\|(S')_+\|_{L^{\frac{3}{2}}(0,1)}<\infty$.
\end{enumerate}
\end{definition}

For such a reference state, the \textit{relative entropy} is defined by 
\begin{equation}\label{eq: relative entropy}
\mathcal{H}(\gamma|\gamma_f)=\mathcal{H}_S(\gamma|\gamma_f):=-\textup{Tr}\big(S(\gamma)-S(\gamma_f)-S'(\gamma_f)(\gamma-\gamma_f)\big).
\end{equation}
Note that by construction, $\CH(\ga|\ga_f)$ is non-negative for any $0 \le \ga \le 1$. For more details and key properties of the relative entropy, we refer to \cite{LS1, LS2} and Section \ref{sec: Generalized relative entropy} of this paper. Let 
\begin{equation}\label{eq: operator class}
\mathcal{K}_f:=\Big\{\gamma:\ \textup{self-adjoint on }L^2(\mathbb{R}^3),\ 0\leq \gamma\leq 1\textup{ and }\mathcal{H}(\gamma|\gamma_f)<\infty\Big\}\end{equation}
denote the collection of operators having finite relative entropy, called the \textit{free energy space}. Then, given a finite signed measure $w$ on $\BR^3$, we define the \textit{relative free energy} by
\begin{equation}\label{eq: relative free energy}
\mathcal{F}_f(\gamma|\gamma_f):=\mathcal{H}(\gamma|\gamma_f)+\frac{1}{2}\iint_{\mathbb{R}^3\times\mathbb{R}^3}w(x-y)\rho_{\gamma-\gamma_f}(x)\rho_{\gamma-\gamma_f}(y)dxdy.
\end{equation}
This quantity is well-defined on the free energy space $\mathcal{K}_f$ (see Proposition \ref{main:Lieb-Thirring}). 

Our main result asserts that under weaker assumptions on $w$ and $f$, the NLH \eqref{NLH0} preserves the relative free energy and is globally well-posed in the free energy space.

\begin{theorem}\label{main theorem}
Suppose that $w\in\mathcal{M}(\mathbb{R}^3)$ is a finite measure on $\BR^{3}$ such that $\wh{w} \ge 0$, $S:[0.1] \to \BR^+$ is $\CH$-admissible, and that $f := (S')^{-1}$ satisfies 
$$\|\lxr^{\frac{3}{2}+\epsilon}f(|\xi|^2)\|_{L^2_\xi(\mathbb{R}^3)} <\infty$$
for small $\epsilon>0${, where $\langle\xi\rangle=\sqrt{1+|\xi|^2}$ is the standard Japanese bracket.} Then, given initial data $\ga_0  \in \CK_f$, there exists a unique global solution $\gamma(t)$ to the NLH \eqref{NLH0} with initial data $\ga_0$ such that $\gamma(t)\in \CK_f$ and $\CF_f(\gamma(t)|\ga_f) = \CF_f(\ga_0 | \ga_f)$ for all $t\in\mathbb{R}$.
\end{theorem}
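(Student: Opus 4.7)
The plan is to (i) recast the problem in terms of the perturbation $Q:=\gamma-\gamma_f$, (ii) prove local well-posedness in the free-energy space $\mathcal{K}_f$ by a contraction-mapping argument built on density-Strichartz estimates in the spirit of Frank–Lewin–Lieb–Seiringer and Frank–Sabin, and (iii) extend to global times by combining the Lieb–Thirring-type inequality of Proposition~\ref{main:Lieb-Thirring} with conservation of the relative free energy $\mathcal{F}_f$. Since $\gamma_f$ commutes with $-\Delta$ and $\rho_{\gamma_f}$ is constant, the equation for $Q$ reads
$$i\partial_t Q=[-\Delta,Q]+[w*\rho_Q,\gamma_f]+[w*\rho_Q,Q],\qquad Q(0)=\gamma_0-\gamma_f,$$
so only $\rho_Q$ enters the mean field, and the delicate term is the inhomogeneous one that is \emph{linear} in $Q$, namely $[w*\rho_Q,\gamma_f]$.

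For the local theory I would set up a fixed-point problem for $Q$ in a Strichartz-type space in which $\rho_Q$ lies in some $L^p_t L^q_x$, obtained from Frank–Sabin estimates on $e^{it\Delta}Q_0 e^{-it\Delta}$ and their inhomogeneous counterparts. Because $w$ is only a finite measure, convolution with $w$ provides no smoothing: one has $\widehat{w}\in L^\infty$ with no decay. What rescues the estimate is the decay hypothesis $\|\langle\xi\rangle^{3/2+\epsilon}f(|\xi|^2)\|_{L^2_\xi}<\infty$, which places $\gamma_f$ in a sufficiently small Schatten class and allows the commutator $[w*\rho_Q,\gamma_f]$ to be bounded in the same norm as the source $Q$. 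The purely quadratic piece $[w*\rho_Q,Q]$ is treated by H\"older combined with the density-Strichartz chain. One then verifies that the constraints $0\le\gamma\le 1$ and self-adjointness propagate (this is structural, as the flow is Hamiltonian unitary conjugation) and that the time of existence depends only on $\mathcal{H}(\gamma_0|\gamma_f)$, through the Lieb–Thirring-type control
$$\|\rho_Q\|_{L^p(\mathbb{R}^3)} \lesssim \mathcal{H}(\gamma|\gamma_f)^{\theta}+\text{(lower order)}$$
provided by Proposition~\ref{main:Lieb-Thirring}.

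For globalization, the sign assumption $\widehat{w}\ge 0$ makes the interaction contribution to $\mathcal{F}_f$ non-negative, so conservation of $\mathcal{F}_f$ along the flow would yield $\mathcal{H}(\gamma(t)|\gamma_f)\le \mathcal{F}_f(\gamma_0|\gamma_f)$ uniformly in $t$, which by Proposition~\ref{main:Lieb-Thirring} gives a uniform-in-time bound on $\rho_{Q(t)}$ in the relevant $L^p$ space, and iterating the local theory then produces a global solution. The conservation itself must be justified by approximation: regularize the initial data and the interaction (e.g.\ truncate $\widehat{w}$ to a compactly supported smooth function, or pass through finite-trace approximations of $\gamma_f$) so that $\tfrac{d}{dt}\mathcal{F}_f=0$ is a direct computation on smooth solutions, and then pass to the limit using the uniform Strichartz estimates and lower semicontinuity of the relative entropy. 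Uniqueness in $\mathcal{K}_f$ follows from a Gronwall argument exploiting the Lipschitz continuity of the nonlinearity with respect to $\rho_Q$ in the Strichartz norm.

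The principal obstacle is the local theory itself: without any smoothing from $w$, the linear-in-$Q$ source $[w*\rho_Q,\gamma_f]$ cannot be absorbed by the dispersive estimate for $Q$ alone, and one must trade the missing regularity against regularity of the reference state, which is precisely what the strengthened hypothesis on $f$ is designed to supply. A secondary difficulty is upgrading conservation of $\mathcal{F}_f$ from the regularized equations to arbitrary data in $\mathcal{K}_f$: weak convergence of $\rho_{Q_n-\gamma_f}$ is only good enough if the interaction term in $\mathcal{F}_f$ is passed to the limit via Fatou together with the non-negativity furnished by $\widehat{w}\ge 0$, and one must match this lower semicontinuity with the corresponding lower semicontinuity for $\mathcal{H}(\cdot|\gamma_f)$ from Section~\ref{sec: Generalized relative entropy}.
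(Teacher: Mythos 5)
Your outline is right about the broad strategy (perturbation $Q=\gamma-\gamma_f$, Strichartz-based local theory, conservation of $\mathcal{F}_f$ plus Lieb--Thirring for globalization, and regularization plus lower semicontinuity to justify the conservation law), and your remark that the hypothesis on $f$ is what tames the commutator $[w*\rho_Q,\gamma_f]$ is correct. However, the local theory as you propose it contains a genuine gap: you suggest running the contraction mapping directly on the \emph{operator} $Q$ in an operator Strichartz norm. In 3D this does not close. Klein's inequality only gives $\mathcal{H}(\gamma|\gamma_f)\gtrsim \|\langle\nabla\rangle(\gamma-\gamma_f)\|_{\mathfrak{S}^2}^2$, i.e.\ a one-sided derivative in Schatten-2, while any density estimate good enough to absorb the non-smoothing convolution $w*\rho_Q$ demands control of a two-sided, higher-regularity norm such as $\|\langle\nabla\rangle^{(3+\epsilon)/4}Q\langle\nabla\rangle^{(3+\epsilon)/4}\|_{\mathfrak{S}^2}$, which the relative entropy does not control. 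So there is no natural operator Banach space for the iteration whose norm is finite for arbitrary data in $\mathcal{K}_f$; this obstruction is precisely why the paper abandons the operator-level fixed point.

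What the paper does instead is recast the Duhamel representation using the propagator $S_{w*\rho}(t,t_0)$ of the linear Schr\"odinger equation with potential $w*\rho$ and take the density, obtaining a closed integral equation \eqref{density NLH} in the \emph{scalar} unknown $\rho$ alone. The contraction is run purely in the mixed Lebesgue--Sobolev space $\mathcal{D}^{1+\epsilon}(I)=L_t^\infty L_x^2\cap L_t^2\dot H_x^{1+\epsilon}$, and the operator $Q$ is reconstructed afterward from the fixed point $\rho$. To make this work the paper develops Strichartz estimates for densities of the \emph{perturbed} flow starting from data in $\mathcal{K}_f$ (Proposition~\ref{Density estimates III}), and the proof of these uses structure your argument does not invoke: the integral representation $\gamma-\gamma_f=\int_0^1(\mathbbm{1}(\gamma\ge\lambda)-\Pi_\mu^-)\,d\lambda$, the relative entropy lower bound \eqref{eq: relative entropy lower bound}, and a frequency decomposition into $\Pi_{2\mu}^\pm$ pieces (Lemma~\ref{lem: LT ineq}) that lets one trade trace information in high frequencies for Sobolev regularity in the density while using the low-frequency cut-off to gain smoothness. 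Without this mechanism, there is no way to feed the free-energy data into the local theory. Finally, for globalization you should also note that the regularization must involve the entropy-generating function $S$ itself (a sequence $S_n$ as in \cite[Lemma~10]{LS3}), not only $w$ and the initial data, so that both the approximating relative entropies $\mathcal{H}_{S_n}$ converge and the reference states $\gamma_{f_n}$ dominate $\gamma_f$ in the monotonicity needed for the lower-semicontinuity step; and the interaction term in $\mathcal{F}_f$ is actually handled by \emph{strong} convergence of the densities in $L_t^2H_x^{1+\epsilon}$, not Fatou.
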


\begin{remark}
$(i)$ In 1D, the analogous global well-posedness (see Theorem \ref{thm: 1D GWP}) can be proved by minor modifications of the proof in Lewin and Sabin \cite{LS3}. One of the reasons is that the $L^2(\mathbb{R})$-norm of the density $\rho_{\gamma-\gamma_f}$ of a perturbation can be controlled by the operator norm $\|\langle\nabla\rangle^{\frac{1+\epsilon}{4}}(\gamma-\gamma_f)\langle\nabla\rangle^{\frac{1+\epsilon}{4}}\|_{\mathfrak{S}^2}$, where $\|\cdot\|_{\mathfrak{S}^2}$ is the Schatten-2 norm, and we can establish a proper local well-posedness using the corresponding operator class. Then, combining with Klein's inequality \eqref{eq: Klein's inequality}, one can deduce the continuity of the initial data-to-solution map for the perturbation equation. It allows us to justify the conservation of the relative free energy (see \eqref{eq: relative free energy}) taking the limit of those for regular solutions.\\
$(ii)$ On the other hand, in multi-dimensions, the situation becomes more complicated to include singular interactions for global well-posedness. Indeed, in order to control $\|\rho_{\gamma-\gamma_f}\|_{L^2(\mathbb{R}^d)}$ to make the potential energy well-defined, a higher regularity bound, including $\||\nabla|^{\frac{d+\epsilon}{4}}(\gamma-\gamma_f)|\nabla|^{\frac{d+\epsilon}{4}}\|_{\mathfrak{S}^2}$, is required. However, the a priori bound from the relative free energy by Klein's inequality controls the lower regularity quantity $\|(\gamma-\gamma_f)\langle\nabla\rangle\|_{\mathfrak{S}^2}$ only. For this reason, it is difficult to choose a proper operator norm for local well-posedness with the conservation law of the relative free energy.\\
$(iii)$ Intuitively, global well-posedness seems easier to show in 2D. However, we are not able to prove it, because endpoint failures in some inequalities arise in several places. Precisely, the Schatten-2 norm of $|\nabla|^{\frac{1}{2}}Q_0|\nabla|^{\frac{1}{2}}$ is used in \eqref{high freq part initial data estimate}, but then with these half-derivatives, Lemma \ref{lem: Strichartz estimates for density functions} fails in 2D. We speculate that this is a technical issue, but we postpone to work on the 2D problem in future work.
\end{remark}

\subsection{Ideas of Proof}

The general outline of the proof follows from Lewin and Sabin \cite{LS2}. Analyzing the equation 
\begin{equation}\label{NLH}
i\partial_t Q=[-\Delta+w*\rho_Q, Q+\gamma_f]
\end{equation}
for the perturbation $Q(t)=\gamma(t)-\gamma_f$, we will show that solutions to the NLH \eqref{NLH0}, evolving from the free energy space, preserve the relative free energy, and that the conserved quantity prevents solutions from blowing up in finite time. However, several new ingredients are introduced to include singular pair interactions. 

First of all, our proof is based on the key observation that the relative entropy has a better control in high frequencies. Indeed, it is already proved in Lewin and Sabin \cite{LS2} that the relative entropy has a lower bound
\begin{equation}\label{eq: relative entropy lower bound}
\mathcal{H}(\gamma|\gamma_f)\geq\int_0^1\textup{Tr}(-\Delta-\mu)\big(\mathds{1}(\gamma\geq\lambda)-\Pi_\mu^-\big)d\lambda,
\end{equation}
where $\mu=S'(\lambda)$ and $\Pi_\mu^-=\mathds{1}(-\Delta\leq\mu)$ (see \cite[$(86)$]{LS2}). In the right hand side lower bound, since $(-\Delta-\mu)$ is degenerate on the frequency sphere $|\xi|=\sqrt{\mu}$, it is natural to decompose the integral with respect to $\mu$, and write
\begin{equation}\label{eq: relative entropy lower bound'}
\mathcal{H}(\gamma, \gamma_f)\geq\sum_{\pm}\int_0^1\textup{Tr}(-\Delta-\mu)\Pi_{2\mu}^\pm\big(\mathds{1}(\gamma\geq\lambda)-\Pi_\mu^-\big)\Pi_{2\mu}^\pm d\lambda,
\end{equation}
where $\Pi_{2\mu}^-=\mathds{1}(-\Delta\leq2\mu)$ and $\Pi_{2\mu}^+=\mathds{1}(-\Delta>2\mu)$. On the other hand, from the expression 
\begin{equation}\label{eq:perturbation representation}
	\gamma-\gamma_f=\int_0^1 \big(\mathds{1}(\gamma\geq\lambda)-\Pi_\mu^-\big)d\lambda
\end{equation}
for the perturbation, obtained by functional calculus, the density also can be decomposed into 
$$\begin{aligned}
\rho_{\gamma-\gamma_f}=\sum_{\pm}\int_0^1\rho_{\Pi_{2\mu}^\pm(\mathds{1}(\gamma\geq\lambda)-\Pi_\mu^-)\Pi_{2\mu}^\pm}d\lambda+\sum_{\pm}\int_0^1\rho_{\Pi_{2\mu}^\pm(\mathds{1}(\gamma\geq\lambda)-\Pi_\mu^-)\Pi_{2\mu}^\mp}d\lambda.
\end{aligned}$$
We observe that the high-high frequency contribution of the density
$${\int_0^1\rho_{\Pi_{2\mu}^+(\mathds{1}(\gamma\geq\lambda)-\Pi_\mu^-)\Pi_{2\mu}^+}d\lambda}$$
can be controlled by that of the lower bound in \eqref{eq: relative entropy lower bound'}, because it includes a trace norm $\textup{Tr}(-\Delta-\mu)\Pi_{2\mu}^+\big(\mathds{1}(\gamma\geq\lambda)-\Pi_\mu^-\big)\Pi_{2\mu}^+$ but also $(-\Delta-\mu)$ is comparable with $ -\Delta$. For the remaining part of the density, one can take an advantage of smoothness from the low frequency cut-off.

Secondly, we employ the equivalent density function formulation for perturbations, motivated by Lewin and Sabin \cite{LS2}, but it is modified adopting the idea in the first author's work \cite{H1}. In general, for global well-posedness, proper local well-posedness is required to construct solutions satisfying the conservation of the relative free energy. However, in our setting, an issue is that it seems that there is no Banach space of operators equipped with the norm controlled by the relative entropy. Indeed, by Klein's inequality $\mathcal{H}(\gamma,\gamma_f)\geq C_{\textup{Klein}}\textup{Tr}(1-\Delta)(\gamma-\gamma_f)^2$ (see \cite{LS2}), one may speculate that the operator space equipped with the norm $(\textup{Tr}(1-\Delta)(\gamma-\gamma_f)^2)^{1/2}$ would be a possible candidate, but it is not good enough when $w$ is too singular. To overcome this problem, we reformulate the equation as follows. Let $S_V(t,t_0)$ denote the linear propagator for the linear Schr\"odinger equation from time $t_0$ with a time-dependent potential
\begin{equation}\label{one-particle LS}
i\partial_t u=-\Delta u+Vu,
\end{equation}
where $V=V(t,x):\mathbb{R}\times\mathbb{R}^{3}\to\mathbb{R}$. In other words, $\phi(t)=S_V(t,t_0)\phi$ is the solution to the equation \eqref{one-particle LS} with $\phi(t_0)=\phi$. Then, it is deduced from the NLH \eqref{NLH} for perturbations that  
\begin{equation}\label{eq: NLH'}
Q(t)=S_{w*\rho_Q}(t,0)_\star Q_0-i\int_0^t S_{w*\rho_Q}(t,t_1)_\star [w*\rho_Q(t_1), \gamma_f]dt_1,
\end{equation}
where $\ga_0 = Q_0 + \ga_f$ and $\mathcal{L}_\star $ is the abbreviated notation for the operator 
$$\mathcal{L}_\star A=\mathcal{L}A\mathcal{L}^*.$$
Consequently, taking density of \eqref{eq: NLH'}, we obtain the equation 
\begin{equation}\label{density NLH}
\rho(t)=\rho_{S_{w*\rho}(t,0)_\star Q_0}-i\int_0^t \rho_{S_{w*\rho}(t,t_1)_\star [w*\rho(t_1), \gamma_f]}dt_1.
\end{equation}
Now, unlike \eqref{NLH}, the equation \eqref{density NLH} has an unknown function $\rho$ only. Therefore, for local well-posedness, it suffices to choose a standard mixed norm function space where solutions live. The idea of using the density function formulation was introduced earlier via the wave operator in \cite{LS1, LS2}, but \eqref{density NLH} becomes relatively easier to deal with.

Finally, for the well-posedness of the density formulation \eqref{density NLH}, we prove and make use of Strichartz estimates for density functions of the linear perturbative Schr\"odinger flow from the free energy space $\mathcal{K}_f$.  Indeed, by the perturbative nature of the contraction mapping argument, one may hope to show local well-posedness taking a function space, with possibly the strongest norm, in which the free evolution $\rho_{e^{it\Delta}Q_0e^{-it\Delta} }$, with $\ga_0 = Q_0 + \ga_f \in\mathcal{K}_f$, lies. For this, we combine Strichartz estimates in \cite{CHP1} and the Lieb-Thirring inequality \cite[Theorem 7]{LS2}, and also employ the relative entropy bound \eqref{eq: relative entropy lower bound'}. 

\subsection{Outline of the paper}
The rest of the paper is organized as follows. In Section \ref{sec: preliminaries}, we provide preliminary definitions and basic Strichartz estimates for operators and density functions associated with the free Schr\"odinger flow. In Section \ref{sec: Generalized relative entropy}, we review the relative entropy and the relative free energy from Lewin and Sabin \cite{LS1, LS2}, and extend the Lieb-Thirring inequality (see Proposition \ref{main:Lieb-Thirring}). In Section \ref{sec: Density function estimates for linear Schrodinger flows}, we develop the Strichartz estimates for density functions associated with the \textit{perturbed} linear flow (Proposition \ref{Density estimates III}), which is the key analysis tool in our paper. Then, in Section \ref{sec: local well-posedness of the NLH}, using the estimates in the previous section, we establish the local well-posedness of the density formulation \eqref{density NLH} for the NLH. Finally, in Section \ref{sec: Global well-posedness of the NLH}, making the conservation of the relative free energy rigorous, we establish global well-posedness. In addition, in Appendix \ref{sec: 1D case}, we give a simpler proof of global well-posedness in the 1D case, and illustrate why the Strichartz estimates (Proposition \ref{Density estimates III}) are introduced for the 3D problem. In Appendix \ref{sec: global well-posedness in the trace class}, we show the global well-posedness of trace-class perturbations for NLH, which construct regular solutions used in Section \ref{sec: Construction of approximate solutions}.

\subsection{Acknowledgement}
This research was supported by the Chung-Ang Univ
ersity research grant in 2024. Y. Hong was supported by National Research Foundation of Korea (NRF) grant funded by the Korean government (MSIT) (No. RS-2023-00208824 and No. RS-2023-00219980).
S. Hadama was supported by JST, the establishment of university fellowships towards the creation of science technology innovation, Grant Number JPMJFS2123.

\section{Preliminaries}\label{sec: preliminaries}

\subsection{Operator spaces}

Throughout this paper, for $1\leq p\leq\infty$, the $p$-th Schatten class $\mathfrak{S}^p$ is defined to be the completion of compact operators on $L^2(\mathbb{R}^3)$ equipped with the norm
$$\|\gamma\|_{\mathfrak{S}^p}:=\left\{\begin{aligned}
&\big(\textup{Tr}|\gamma|^p\big)^{\frac{1}{p}}&&\textup{if }1\leq p<\infty,\\
&\|\gamma\|_{\mathcal{L}(L_x^2(\mathbb{R}^3))}&&\textup{if }p=\infty,\\
\end{aligned}\right.$$
where $|\gamma|=\sqrt{\gamma \gamma^*}$, $\gamma^*$ is the adjoint of $\gamma$ and $\mathcal{L}(H)$ is the operator nom on the Hilbert space $H$. In particular, $\mathfrak{S}^1$ is the trace class, $\mathfrak{S}^2$ is the Hilbert-Schmidt class, and $\mathfrak{S}^\infty$ is the Banach space of all compact operators.

In our analysis, operator norms based on the Schatten-2 norm, or the Hilbert-Schmidt norm, are particularly useful. Indeed, the Schatten-2 norm is friendlier to Fourier analysis, because it can be identified with the $L^2$-norm for operator kernels as functions, and 
$$\|\gamma\|_{\mathfrak{S}^2}^2=\iint_{\mathbb{R}^3\times\mathbb{R}^3}|\gamma(x,x')|^2dxdx'=\|\gamma(x,x')\|_{L_{x,x'}^2}^2.$$
In this regard, for $\alpha\geq0$, we define $\mathcal{H}^\alpha$ by the Hilbert-Schmidt Sobolev space equipped with the norm 
$$\begin{aligned}
\|\gamma\|_{\mathcal{H}^\alpha}:=\|\langle\nabla\rangle^\alpha\gamma\langle\nabla\rangle^\alpha\|_{\mathfrak{S}^2}&=\bigg\{\iint_{\mathbb{R}^3\times\mathbb{R}^3}\big|\langle\nabla_x\rangle^\alpha\langle\nabla_{x'}\rangle^\alpha\gamma(x,x')\big|^2dxdx'\bigg\}^{1/2}\\
&=\bigg\{\frac{1}{(2\pi)^{6}}\iint_{\mathbb{R}^3\times\mathbb{R}^3}\langle\xi\rangle^{2\alpha}\langle\xi'\rangle^{2\alpha}|\hat{\gamma}(\xi,\xi')|^2d\xi d\xi'\bigg\}^{1/2}.
\end{aligned}$$

\subsection{Strichartz estimates for the free flow}

We recall the Strichartz estimates for the free flow \cite{KT},
\begin{equation}\label{eq: one-particle Strichartz}
\|e^{it\Delta} u_0\|_{L_t^q(\mathbb{R};L_x^r)}\lesssim \|u_0\|_{L_x^2}
\end{equation}
provided that $(q,r)$ is Strichartz-admissible, that is, $(q,r)$ satisfying 
\begin{equation}\label{eq: Strichartz admissible}
\frac{2}{q}+\frac{3}{r}=\frac{3}{2}\quad\textup{and}\quad2\leq q,r\leq\infty.
\end{equation}
To capture dispersive properties at the operator level, we introduce the Strichartz norm for the operator kernel defined by
\begin{equation}\label{eq: operator kernel Strichartz norm}
\|\gamma(t)\|_{\mathcal{S}(I)}:=\sup_{(q,r):\textup{ admissible}}\|\gamma(t,x,x')\|_{L_t^q(I; L_x^rL_{x'}^2)}+\|\gamma(t,x,x')\|_{L_t^q(I; L_{x'}^rL_{x}^2)},
\end{equation}
where $I\subset\mathbb{R}$. For $\alpha\geq0$, we define the Sobolev-Strichartz norm by 
$$\|\gamma(t)\|_{\mathcal{S}^{\alpha}(I)}:=\big\|\langle\nabla\rangle^\alpha\gamma(t)\langle\nabla\rangle^\alpha\big\|_{\mathcal{S}(I)},$$
and let $\mathcal{S}^{\alpha}(I)$ denote the subspace of $L_t^\infty(I; \mathcal{H}^\alpha)$ satisfying $\|\gamma(t)\|_{\mathcal{S}^{\alpha}(I)}<\infty$. Then, the following Strichartz estimates hold (see \cite{CHP1} for instance).

\begin{lemma}[Strichartz estimates for operator kernels]\label{lem: Strichartz estimates for operator kernels}
There exists $c_0>0$ such that 
$$\begin{aligned}
\|e^{it\Delta}\gamma_0e^{-it\Delta}\|_{\mathcal{S}^{\alpha}(\mathbb{R})}&\leq c_0 \|\gamma_0\|_{\mathcal{H}^\alpha},\\
\bigg\|\int_0^t e^{i(t-t_1)\Delta}\gamma(t_1)e^{-i(t-t_1)\Delta}dt_1\bigg\|_{\mathcal{S}^{\alpha}(\mathbb{R})}&\leq c_0 \|{\langle\nabla\rangle^\alpha}\gamma{\langle\nabla\rangle^\alpha}\|_{L_t^{\tilde{q}'}(\mathbb{R}; L_x^{\tilde{r}'}L_{x'}^2)},
\end{aligned}$$
provided that $(\tilde{q},\tilde{r})$ is Strichartz-admissible.
\end{lemma}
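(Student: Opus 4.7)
My plan is to derive both estimates from the scalar Strichartz estimate \eqref{eq: one-particle Strichartz} by decomposing $\gamma_0$ into rank-one pieces via a singular value decomposition. As a first reduction, since $\langle\nabla\rangle^\alpha$ is a Fourier multiplier it commutes with $e^{it\Delta}$, giving the identity
$$\langle\nabla\rangle^\alpha\bigl(e^{it\Delta}\gamma_0 e^{-it\Delta}\bigr)\langle\nabla\rangle^\alpha = e^{it\Delta}\bigl(\langle\nabla\rangle^\alpha\gamma_0\langle\nabla\rangle^\alpha\bigr)e^{-it\Delta}.$$
The $\mathcal{H}^\alpha$ estimate therefore follows from the $\alpha=0$ estimate applied to $\langle\nabla\rangle^\alpha\gamma_0\langle\nabla\rangle^\alpha\in\mathfrak{S}^2$, and I will restrict to $\alpha=0$ in what follows.

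For the homogeneous bound I will write $\gamma_0=\sum_j\lambda_j |u_j\rangle\langle v_j|$ with orthonormal $\{u_j\}$, $\{v_j\}$ and $\sum_j\lambda_j^2=\|\gamma_0\|_{\mathfrak{S}^2}^2$, so that the kernel of $\gamma(t):=e^{it\Delta}\gamma_0 e^{-it\Delta}$ is $\gamma(t,x,x')=\sum_j\lambda_j (e^{it\Delta}u_j)(x)\overline{(e^{it\Delta}v_j)(x')}$. Because $e^{it\Delta}$ is unitary, the family $\{e^{it\Delta}v_j\}$ stays orthonormal for every $t$, and this is precisely what decouples the $L^2_{x'}$-integral into the clean pointwise identity
$$\|\gamma(t,x,\cdot)\|_{L^2_{x'}}^2=\sum_j\lambda_j^2 |(e^{it\Delta}u_j)(x)|^2.$$
Since every Strichartz-admissible pair $(q,r)$ in dimension three satisfies $q/2,\,r/2\geq 1$, two applications of Minkowski's inequality followed by \eqref{eq: one-particle Strichartz} then yield
$$\|\gamma(t,x,x')\|_{L^q_t L^r_x L^2_{x'}}^2\leq\sum_j\lambda_j^2\|e^{it\Delta}u_j\|_{L^q_t L^r_x}^2\lesssim\sum_j\lambda_j^2=\|\gamma_0\|_{\mathfrak{S}^2}^2,$$
and the companion estimate $\|\gamma(t,x,x')\|_{L^q_t L^r_{x'} L^2_x}$ follows after exchanging the roles of $\{u_j\}$ and $\{v_j\}$ (equivalently, by replacing $\gamma_0$ with $\gamma_0^*$ and using unitarity).

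For the Duhamel bound I will run a standard $TT^*$/duality argument. The homogeneous estimate just proved says that $T\colon\gamma_0\mapsto e^{it\Delta}\gamma_0 e^{-it\Delta}$ is bounded $\mathfrak{S}^2\to L^q_t(L^r_x L^2_{x'})$, whose formal adjoint $T^*F=\int_{\mathbb{R}} e^{-is\Delta}F(s)e^{is\Delta}\,ds$ is then bounded $L^{\tilde q'}_t(L^{\tilde r'}_x L^2_{x'})\to\mathfrak{S}^2$; composing these gives the untruncated bound on $\int_{\mathbb{R}} e^{i(t-t_1)\Delta}\gamma(t_1)e^{-i(t-t_1)\Delta}\,dt_1$ in $\mathcal{S}(\mathbb{R})$. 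The retarded integral $\int_0^t$ is then recovered from the Christ--Kiselev lemma whenever $q>\tilde q'$, which covers every admissible pairing except the single endpoint $(q,r)=(\tilde q,\tilde r)=(2,6)$; that endpoint, which in any case is not needed for the present applications, can be handled separately by a Keel--Tao type bilinear argument. The only step I expect to require genuine care is the unitary decoupling producing the pointwise identity above: it is precisely the preservation of orthonormality of $\{v_j\}$ under $e^{it\Delta}$ that removes all cross terms and reduces the mixed Lebesgue estimate to a diagonal sum amenable to Minkowski and the scalar Strichartz inequality; everything else is mechanical.
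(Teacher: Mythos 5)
Your argument is correct and is essentially the standard proof: the paper does not reprove this lemma but instead cites \cite{CHP1}, and the singular-value decomposition together with the unitary decoupling of the $L^2_{x'}$-integral, Minkowski's inequality for $q/2,r/2\ge 1$, and the scalar Strichartz bound is exactly the mechanism used there for the homogeneous estimate, with the retarded estimate obtained via $TT^*$, duality, and Christ--Kiselev (with the double-endpoint $q=\tilde q'=2$ handled separately, as you note). One small remark worth keeping in mind: since $\mathcal{S}(\mathbb{R})$ involves a supremum over admissible $(q,r)$, you should ensure uniformity of the implicit constants near $q=2$ when $\tilde q=2$; this is cleanest if you obtain the two extreme norms $(q,r)=(\infty,2)$ and $(q,r)=(2,6)$ and interpolate, rather than invoking Christ--Kiselev separately for each $q$.
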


For densities, we will make use of the following Strichartz estimates.

\begin{lemma}[Strichartz estimates for density functions]\label{lem: Strichartz estimates for density functions}
For sufficiently small $\epsilon>0$, there exists $c_0=c_0(\epsilon)>0$ such that 
$$\|\rho_{e^{it\Delta}\gamma_0e^{-it\Delta}}\|_{L_t^\infty(\mathbb{R}; L_x^2)\cap L_t^2(\mathbb{R}; \dot{H}_x^{1+\epsilon})}\leq c_0\big\||\nabla|^{\frac{1}{2}}\gamma_0|\nabla|^{\frac{1}{2}}\big\|_{\mathcal{H}^{\frac{1+2\epsilon}{4}}}.$$
\end{lemma}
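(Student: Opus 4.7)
The plan is to prove both estimates by direct Fourier analysis, reducing each to an explicit weighted oscillatory integral in the frequency variables. Set $\beta=(1+2\epsilon)/4$ and $A:=|\nabla|^{1/2}\gamma_0|\nabla|^{1/2}$, so the hypothesis reads $A\in\mathcal{H}^{\beta}$ and $\hat{A}(\xi,\eta)=|\xi|^{1/2}|\eta|^{1/2}\hat{\gamma}_0(\xi,\eta)$. The basic identity I will use is that, for $\gamma_t=e^{it\Delta}\gamma_0 e^{-it\Delta}$,
\[
\widehat{\rho_{\gamma_t}}(\zeta)=(2\pi)^{-3}e^{-it|\zeta|^2}\int_{\mathbb R^3} e^{-2it\eta\cdot\zeta}\,\hat{\gamma}_0(\eta+\zeta,\eta)\,d\eta,
\]
obtained from $\rho_{\gamma_t}(x)=\gamma_t(x,x)$ by inserting the free propagation of the kernel in Fourier.

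For the $L^\infty_t L^2_x$ piece, I will first reduce to $t=0$: since $|\nabla|^{1/2}\gamma_t|\nabla|^{1/2}=e^{it\Delta}A\,e^{-it\Delta}$, its $\mathcal{H}^{\beta}$-norm is time-independent. Then a Cauchy--Schwarz in $\eta$ against the weight $w(\eta):=|\eta||\eta+\zeta|\langle\eta\rangle^{2\beta}\langle\eta+\zeta\rangle^{2\beta}$, followed by Plancherel in $\zeta$ and the change of variable $\xi=\eta+\zeta$, yields the bound provided the integral $I(\zeta):=\int_{\mathbb R^3}w(\eta)^{-1}d\eta$ is uniformly bounded in $\zeta$. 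Convergence at infinity requires $4\beta>1$, which is exactly our hypothesis $\epsilon>0$, while the two $|\eta|^{-1}$-type factors are locally integrable in $\mathbb R^3$.

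For the $L^2_t\dot H^{1+\epsilon}_x$ piece, I will Fourier-analyze in $t$ as well. Treating the phase $e^{-2it\eta\cdot\zeta}$ as a 1D Fourier factor in the level-set parameter $r=2\eta\cdot\zeta$ and combining the coarea formula with Plancherel in $t$ gives
\[
\int_{\mathbb R}|\widehat{\rho_{\gamma_t}}(\zeta)|^2\,dt\;\simeq\;\frac{1}{|\zeta|^2}\int_{\mathbb R}\Big|\int_{\{2\eta\cdot\zeta=r\}}\hat{\gamma}_0(\eta+\zeta,\eta)\,d\sigma(\eta)\Big|^2 dr.
\]
A weighted Cauchy--Schwarz on the 2D surface integral (with the same $w$), multiplication by $|\zeta|^{2(1+\epsilon)}$, and another change of variable reduce the claim to the uniform pointwise estimate
\[
\sup_{r\in\mathbb R}\int_{\{2\eta\cdot\zeta=r\}}\frac{d\sigma(\eta)}{|\eta||\eta+\zeta|\langle\eta\rangle^{2\beta}\langle\eta+\zeta\rangle^{2\beta}}\;\lesssim\;|\zeta|^{-(1+2\epsilon)}.
\]
Parametrizing the plane by polar coordinates $\eta_\perp\in\zeta^\perp$ and splitting into $|\eta_\perp|\lesssim|\zeta|$ and $|\eta_\perp|\gtrsim|\zeta|$, each region contributes exactly $|\zeta|^{-4\beta}=|\zeta|^{-(1+2\epsilon)}$, matching the critical power because of the choice $4\beta=1+2\epsilon$.

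The main obstacle is precisely this critical surface-integral bound. The singular factors $|\eta|^{-1}$ and $|\eta+\zeta|^{-1}$ behave quite differently depending on whether the plane $\{2\eta\cdot\zeta=r\}$ passes near $\eta=0$ (when $r\approx 0$), near $\eta=-\zeta$ (when $r\approx -2|\zeta|^2$), or stays far from both, and the matching condition $4\beta=1+2\epsilon$ leaves no slack. Executing the case analysis in $r$, performing the rotationally symmetric $|\eta_\perp|$-integrals, and checking that the worst $r$-regime still yields exactly $|\zeta|^{-4\beta}$ is the heart of the argument; without the $\epsilon>0$ surplus in $\beta$ a logarithmic divergence appears, which explains the presence of $\epsilon$ in both the statement and the hypothesis of Theorem \ref{main theorem}.
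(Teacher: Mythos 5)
Your proof follows essentially the same route as the paper: compute the spatial (respectively space-time) Fourier transform of $\rho_{e^{it\Delta}\gamma_0 e^{-it\Delta}}$, apply Cauchy--Schwarz in the internal frequency variable against the weight $w(\eta)=|\eta||\eta+\zeta|\langle\eta\rangle^{2\beta}\langle\eta+\zeta\rangle^{2\beta}$, and reduce both estimates to uniform bounds on the resulting volume and surface integrals; your coarea-formula phrasing of the $L^2_t$ piece is the same computation as the paper's $\delta$-function on the plane $\tau-|\xi|^2+2\xi\cdot\xi_1=0$, and your observation that $|\nabla|^{1/2}\gamma_t|\nabla|^{1/2}=e^{it\Delta}Ae^{-it\Delta}$ is the paper's appeal to unitarity. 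The one slight imprecision is the claim that each region contributes \emph{exactly} $|\zeta|^{-4\beta}$: this is sharp only for $|\zeta|\gtrsim 1$, while for $|\zeta|\ll 1$ the surface integral is of size $\log(1/|\zeta|)\lesssim|\zeta|^{-4\beta}$, which is still all that is needed.
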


\begin{remark}
For density functions, much wider ranges of Strichartz estimates for general Schatten class initial data have been established in various settings since the important work of Frank-Lewin-Lieb-Seiringer \cite{FLLS2} and Frank-Sabin \cite{FS} (see also \cite{BHLNS, Nakamura, BLS}). However, in this article, we only employ the above Hilbert-Schmidt type version, because it is simpler and easier to apply together with Lemma \ref{lem: Strichartz estimates for operator kernels}.
\end{remark}

\begin{proof}[Proof of Lemma \ref{lem: Strichartz estimates for density functions}]
Note that the Fourier transform of $\rho_\gamma$ is given by
$$\int_{\mathbb{R}^3}\bigg\{\frac{1}{(2\pi)^{6}}\iint_{\mathbb{R}^3\times\mathbb{R}^3}\widehat{\gamma}(\xi_1,\xi_2)e^{ix\cdot(\xi_1+\xi_2)}d\xi_1d\xi_2\bigg\}e^{ix\cdot\xi}dx=\frac{1}{(2\pi)^3}\int_{\mathbb{R}^3} \widehat{\gamma}(\xi_1,\xi-\xi_1)d\xi_1.$$
Hence, by the Plancherel theorem and the H\"older inequality, we obtain that 
$$\begin{aligned}
\|\rho_\gamma\|_{L_x^2}&\lesssim \Bigg\{\sup_{\xi\in\mathbb{R}^3}\int_{\mathbb{R}^3} \frac{d\xi_1}{|\xi-\xi_1|\langle\xi-\xi_1\rangle^{\frac{1+2\epsilon}{2}}|\xi_1|\langle\xi_1\rangle^{\frac{1+2\epsilon}{2}}}\Bigg\}^{\frac{1}{2}}\big\||\nabla|^{\frac{1}{2}}\gamma|\nabla|^{\frac{1}{2}}\big\|_{\mathcal{H}^{\frac{1+2\epsilon}{4}}}\\
&\lesssim \big\||\nabla|^{\frac{1}{2}}\gamma|\nabla|^{\frac{1}{2}}\big\|_{\mathcal{H}^{\frac{1+2\epsilon}{4}}}.
\end{aligned}$$
Thus, by unitarity of $e^{it\Delta}$, it follows that
\begin{equation}\label{eq: pointwise bound for density}
\|\rho_{e^{it\Delta}\gamma_0e^{-it\Delta}}\|_{L_t^\infty(\mathbb{R}; L_x^2)}\lesssim\big\||\nabla|^{\frac{1}{2}}\gamma_0|\nabla|^{\frac{1}{2}}\big\|_{\mathcal{H}^{\frac{1+2\epsilon}{4}}}.
\end{equation}
Similarly, we write the space-time Fourier transform of the density $\rho_{e^{it\Delta}\gamma_0e^{-it\Delta}}$ as
$$\begin{aligned}
&\int_{\mathbb{R}}\bigg\{\frac{1}{(2\pi)^3}\int_{\mathbb{R}^3} e^{-it(|\xi_1|^2-|\xi-\xi_1|^2)}\widehat{\gamma_0}(\xi_1,\xi-\xi_1)d\xi_1\bigg\} e^{-it\tau}dt\\
&=\frac{1}{(2\pi)^{2}}\int_{\mathbb{R}^3}\widehat{\gamma_0}(\xi_1,\xi-\xi_1)\delta(\tau+|\xi_1|^2-|\xi-\xi_1|^2)d\xi_1.
\end{aligned}$$
Then, it follows that 
$$\begin{aligned}
\|\rho_{e^{it\Delta}\gamma_0e^{-it\Delta}}\|_{L_t^2\dot{H}_x^{1+\epsilon}}&\sim\bigg\||\xi|^{1+\epsilon}\int_{\mathbb{R}^3} \widehat{\gamma_0}(\xi-\xi_1,\xi_1)\delta\big(\tau+|\xi_1|^2-|\xi-\xi_1|^2\big)d\xi_1\bigg\|_{L_\tau^2 L_\xi^2}\\
&\leq \Bigg\{{\sup_{(\tau,\xi)\in\mathbb{R}\times\mathbb{R}^3}}|\xi|^{2+2\epsilon}\int_{\mathbb{R}^3} \frac{\delta\big(\tau+|\xi_1|^2-|\xi-\xi_1|^2\big)}{|\xi-\xi_1|\langle\xi-\xi_1\rangle^{\frac{1+2\epsilon}{2}}|\xi_1|\langle\xi_1\rangle^{\frac{1+2\epsilon}{2}}}d\xi_1\Bigg\}^{\frac{1}{2}}\\
&\quad\cdot\big\||\nabla|^{\frac{1}{2}}\gamma_0|\nabla|^{\frac{1}{2}}\big\|_{\mathcal{H}^{\frac{1+2\epsilon}{4}}}.
\end{aligned}$$
By changing the variables, 
$$\begin{aligned}
&|\xi|^{2+2\epsilon}\int_{\mathbb{R}^d} \frac{\delta\big(\tau+|\xi_1|^2-|\xi-\xi_1|^2\big)}{|\xi-\xi_1|\langle\xi-\xi_1\rangle^{\frac{1+2\epsilon}{2}}|\xi_1|\langle\xi_1\rangle^{\frac{1+2\epsilon}{2}}}d\xi_1\\
&=\frac{|\xi|^{1+2\epsilon}}{2}\int_{\tau-|\xi|^2+2\xi\cdot\xi_1=0} \frac{d\sigma_{\xi_1}}{|\xi-\xi_1|\langle\xi-\xi_1\rangle^{\frac{1+2\epsilon}{2}}|\xi_1|\langle\xi_1\rangle^{\frac{1+2\epsilon}{2}}}\lesssim1,
\end{aligned}$$
where $d\sigma_{\xi_1}$ is the standard measure on the plane $\tau-|\xi|^2+2\xi\cdot\xi_1=0$. This proves the lemma.
\end{proof}

\section{Generalized relative entropy}\label{sec: Generalized relative entropy}

Following Lewin-Sabin \cite{LS1, LS3}, we review the rigorous construction of the quantum relative entropy and its basic properties, and provide a refined Lieb-Thiring inequality (Proposition \ref{main:Lieb-Thirring}).

\subsection{A heuristic argument}
We begin with some formal calculations illustrating a motivation to introduce the relative entropy and the relative free energy. First, we note that solutions to the NLH \eqref{NLH0} formally preserve the average energy
$$\mathcal{E}(\ga)= \Tr((-\De)\ga) + \tw {\iint_{\mathbb{R}^3\times\mathbb{R}^3}} w(x-y) \rh_\ga(x) \rh_\ga(y) dxdy$$
and a generalized entropy\footnote{because a solution can be written formally as $\ga(t)=U(t)_\st \ga(0)$ for some unitary operator $U(t)$.}
$$\mathcal{T}_S(\ga):= -\Tr(S(\ga))$$
for ``any'' function $S$. For $\ga=\ga_f+Q$, we formally expand the conserved quantity $\mathcal{T}_S(\ga)-\mathcal{T}_S(\ga_f)+ \mathcal{E}(\ga)-\mathcal{E}(\ga_f)$ as 
$$\begin{aligned}
& -\Tr\big(S(\ga_f+Q)-S(\ga_f)-(-\De)Q\big) + \tw {\iint_{\mathbb{R}^3\times\mathbb{R}^3}} w(x-y) \rh_Q(x) \rh_Q(y) dxdy\\
&\quad+ \bigg\{\frac{1}{(2\pi)^3}\int_{\mathbb{R}^3} f(|\xi|^2)d\xi \int_{\R^3} w(x) dx\bigg\} \Tr(Q).
\end{aligned}$$
Since $\Tr(Q)=\textup{Tr}(\gamma)-\textup{Tr}(\gamma_f)$ is formally conserved, so is 
$$\CF_f(\ga_f+Q|\ga_f):=\underbrace{-\Tr\big(S(\ga_f+Q)-S(\ga_f)-(-\De)Q\big)}_{=\textup{ }\mathcal{H}_S(\ga_f+Q|\ga_f)\textup{ = relative entropy}} + \tw {\iint_{\mathbb{R}^3\times\mathbb{R}^3}} w(x-y) \rh_Q(x) \rh_Q(y) dxdy.$$
However, in order to employ this formal conservation law for global well-posedness, it must satisfy some coercivity property. Therefore, we assume that $\hat{w} \ge 0$ and take  $-\De=S'(\ga_f)$ with $S'' \le 0$. Then, by the formal Taylor series expansion, we have 
\begin{equation}\label{eq: entropy formal lower bound}
\mathcal{H}_S(\ga_f+Q|\ga_f) = -\Tr\big(S(\ga_f+Q)-S(\ga_f)-S'(\ga_f)Q\big)\approx -\frac{1}{2}\Tr\big(QS''(\ga_f)Q\big) \ge 0.
\end{equation}

\subsection{General theory}
Suppose that $S: [0,1]\to \mathbb{R}^+$ is $\mathcal{H}$-admissible (see Definition \ref{def: H-admissible}), and define the \textit{relative entropy} by
\begin{equation}\label{eq: relative entropy definition; finite dim}
\mathcal{H}_S(A|B) := -\textup{Tr}\big(S(A)-S(B)-S'(B)(A-B)\big) \in [0,\I]
\end{equation}
for two finite-dimensional Hermitian matrices $0\leq A, B\leq 1$. By the assumptions, $\mathcal{H}_S(A|B)$ is monotone in the sense that $\mathcal{H}_S(XAX^*|XBX^*) \leq \mathcal{H}_S(A|B)$ for any linear map $X:H \to \tilde{H}$ such that $X^*X \leq 1$, where $H$ and $\tilde{H}$ are finite-dimensional vector spaces \cite[Theorem 1]{LS1}. Thus, the relative entropy can be extended to infinite-dimensional spaces as
\begin{equation}\label{eq: relative entropy definition; infinite dim}
\mathcal{H}_S(A|B) := \lim_{k \to \infty} \mathcal{H}_S(P_k A P_k|P_k B P_k)\in [0,\infty]
\end{equation}
for self-adjoint operators $0 \leq A, B \leq 1$ on a separable Hilbert space $H$, where $P_k$ is an increasing sequence of finite-rank projections on $H$ such that $P_k\uparrow 1$ strongly. An important fact is that $\mathcal{H}_S(A|B)$ is well-defined, since it is independent of the choice of the sequence $P_k\uparrow 1$ \cite[Theorem 2 (1)]{LS1}. Moreover, the relative entropy satisfies the following properties. 
\begin{theorem}[Properties of the relative entropy$\textup{\cite[Theorem 2]{LS3}}$]\label{thm: basic properties of relative entropy}
Suppose that $S: [0,1]\to \mathbb{R}^+$ is $\mathcal{H}$-admissible and that $0\leq A, B\leq 1$ are self-adjoint on a Hilbert space $H$. 
\begin{enumerate}[$(i)$]
\item (Monotonicity) If $X:H \to \tilde{H}$ is linear and $X^*X \leq 1$, then
$$\mathcal{H}_S(A|B) \geq \mathcal{H}_S(XAX^*|XBX^*).$$
\item (Approximation) If $X_k:H \to \tilde{H}_k$ is linear, $X_k^* X_k \leq 1$ and $X_k^* X_k \to 1$ strongly in $H$, then
$$\mathcal{H}_S(A|B) = \lim_{k \to \infty} \mathcal{H}_S(X_k A X_k^*|X_k B X_k^*).$$
\item (Weak lower semi-continuity) If $A_k \to A$ and $B_k \to B$ weakly-$*$ in $\mathcal{B}(H)$, then
$$\mathcal{H}_S(A|B) \leq \liminf_{k \to \infty} \mathcal{H}_S(A_k|B_k).$$
\end{enumerate}
\end{theorem}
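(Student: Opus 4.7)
The plan is to prove the three parts in the order (i), (iii), (ii), using the cited finite-dimensional monotonicity [LS1, Theorem 1] as the only substantial input together with the definition $\mathcal{H}_S(A|B)=\lim_k \mathcal{H}_S(P_k A P_k | P_k B P_k)$ along any increasing sequence of finite-rank projections $P_k\uparrow 1$. The unifying fact used throughout will be that whenever $P$ is a fixed finite-rank projection, weak-$*$ convergence of bounded operators on $H$ forces operator-norm convergence of the compressions $P(\cdot)P$ on the finite-dimensional range of $P$, so that the relative entropy of these compressions is (after a harmless boundary regularization) continuous in the limit.

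For part (i), I would fix increasing finite-rank projections $P_k\uparrow 1_H$ and $\tilde P_\ell \uparrow 1_{\tilde H}$ and consider, for each $k,\ell$, the finite-dimensional map $Y_{k,\ell}:=\tilde P_\ell X|_{\mathrm{Ran}(P_k)}:\mathrm{Ran}(P_k)\to\mathrm{Ran}(\tilde P_\ell)$. Because
$$Y_{k,\ell}^*Y_{k,\ell}=P_k X^*\tilde P_\ell X P_k\le P_k X^*X P_k\le P_k,$$
the hypothesis $X^*X\le 1$ transfers to $Y_{k,\ell}$, and finite-dimensional monotonicity yields
$$\mathcal{H}_S(P_k A P_k|P_k B P_k)\ge \mathcal{H}_S\!\bigl(\tilde P_\ell X P_k A P_k X^*\tilde P_\ell \bigm| \tilde P_\ell X P_k B P_k X^*\tilde P_\ell\bigr).$$
Sending $k\to\infty$ with $\ell$ fixed, the left side tends to $\mathcal{H}_S(A|B)$ by the definition, while the right side, living on the fixed finite-dimensional space $\mathrm{Ran}(\tilde P_\ell)$, converges in operator norm to $\mathcal{H}_S(\tilde P_\ell X A X^*\tilde P_\ell | \tilde P_\ell X B X^*\tilde P_\ell)$ by continuity of $\mathcal{H}_S$ on finite-dimensional matrices. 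Letting $\ell\to\infty$ then recovers $\mathcal{H}_S(XAX^*|XBX^*)$ on the right by the definition applied on $\tilde H$.

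For part (iii), given $A_k,B_k\in\mathcal{B}(H)$ with $A_k\to A$ and $B_k\to B$ weakly-$*$ and an arbitrary finite-rank projection $P$, apply part (i) with $X=P:H\to H$ to obtain $\mathcal{H}_S(A_k|B_k)\ge\mathcal{H}_S(PA_k P|PB_k P)$; since $P$ is finite-rank, the compressions converge in operator norm, so $\liminf_k \mathcal{H}_S(A_k|B_k)\ge\mathcal{H}_S(PAP|PBP)$, and choosing $P$ from an exhausting sequence and passing to the limit recovers $\mathcal{H}_S(A|B)$ on the right by the definition. For part (ii), the inequality $\limsup_k \mathcal{H}_S(X_k A X_k^*|X_k B X_k^*)\le \mathcal{H}_S(A|B)$ is immediate from (i). For the matching lower bound I would apply (i) once more to the adjoint $X_k^*:\tilde H_k\to H$, which satisfies $X_k X_k^*\le 1$ because $X_k^*X_k$ and $X_k X_k^*$ share the same non-zero spectrum, to get
$$\mathcal{H}_S(X_k A X_k^*|X_k B X_k^*)\ge \mathcal{H}_S\!\bigl((X_k^*X_k)A(X_k^*X_k)\bigm|(X_k^*X_k)B(X_k^*X_k)\bigr);$$
since $X_k^*X_k\to 1$ strongly, the arguments on the right converge weakly-$*$ to $A$ and $B$, and (iii) closes the loop with $\liminf_k \mathcal{H}_S(X_k A X_k^*|X_k B X_k^*)\ge \mathcal{H}_S(A|B)$.

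The main obstacle I foresee is the ``continuity on finite dimensions'' step repeatedly invoked in the limit arguments: the finite-dimensional functional $(A,B)\mapsto \mathcal{H}_S(A|B)$ need not extend continuously up to the boundary of $\{0\le A,B\le 1\}$, since $\lim_{x\to 0^+}S'(x)=+\infty$ causes $S'(B)$ to diverge whenever $B$ has an eigenvalue at $0$, and an analogous issue can arise at $1$. I would handle this in the standard way: replace $B$ by $B_\varepsilon=(1-2\varepsilon)B+\varepsilon$, whose spectrum lies in $[\varepsilon,1-\varepsilon]$, carry out each limit argument with $B_\varepsilon$ in place of $B$, and then remove the regularization using monotone convergence from below (together with non-negativity of the relative entropy, ensured by operator monotonicity of $-S'$) to justify the exchange of limits.
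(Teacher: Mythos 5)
The paper does not prove this statement: it is quoted directly from Lewin--Sabin \cite[Theorem 2]{LS3} (which in turn rests on \cite{LS1}), so your proposal must stand on its own. The skeleton you build --- finite-dimensional monotonicity from \cite[Theorem 1]{LS1}, compression by finite-rank projections on both sides, deducing (iii) from (i), and obtaining the lower bound in (ii) by applying (i) to $X_k^*$ and then invoking (iii) --- is exactly the route of \cite{LS1}, and the operator-theoretic reductions are all sound: $Y_{k,\ell}^*Y_{k,\ell}\le P_k$, $\|X_kX_k^*\|=\|X_k^*X_k\|\le1$, strong convergence of $P_kAP_k\to A$, and norm convergence of finite-rank compressions under weak-$*$ convergence are all correct.

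The genuine gap is precisely the step you flag at the end: every limit you take needs the \emph{finite-dimensional} functional $(A,B)\mapsto\mathcal{H}_S(A|B)$ to be jointly lower semicontinuous in norm up to the boundary of $\{0\le A,B\le1\}$, and this is not contained in \cite[Theorem 1]{LS1}. Your regularization does not close it, for two reasons. First, $\varepsilon\mapsto\mathcal{H}_S(A|B_\varepsilon)$ with $B_\varepsilon=(1-2\varepsilon)B+\varepsilon$ is not monotone in $\varepsilon$ (the relative entropy is jointly convex, not monotone in its second argument), so there is nothing for ``monotone convergence from below'' to act on; and recovering $\mathcal{H}_S(A|B)$ from $\mathcal{H}_S(A|B_\varepsilon)$ as $\varepsilon\to0$ is itself a lower-semicontinuity assertion, so the patch is circular. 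Second, the regularization does not commute with compression: $XB_\varepsilon X^*=(1-2\varepsilon)XBX^*+\varepsilon XX^*$, which is not $(XBX^*)_\varepsilon$ unless $XX^*=1$, so on the right-hand side of your monotonicity inequality the second argument is not pushed off the boundary and the continuity you invoke still fails exactly where you need it. The missing ingredient --- which \cite{LS1} proves separately and then uses --- is joint lower semicontinuity of the finite-dimensional relative entropy, obtained from the integral representation of the operator-monotone function $-S'$, which exhibits $\mathcal{H}_S(A|B)$ as an integral of non-negative continuous quantities to which Fatou's lemma applies. Once that lemma is imported (or reproved), only lower semicontinuity, never continuity, is needed at each of your limit steps, and your argument goes through as written.
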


By the assumptions, the relative entropy is non-negative (see also \eqref{eq: entropy formal lower bound}). Indeed, even more than that, it enjoys a better coercivity property to measure the difference between two operators.

\begin{theorem}[Klein's inequality$\textup{\cite[Theorem 3]{LS3}}$]\label{thm: Klein's inequality}
Under the assumptions in Theorem \ref{thm: basic properties of relative entropy}, there exists $C_S>0$, depending only on $S$, such that
$$\mathcal{H}_S(A|B) \geq C_S \textup{Tr}\big(1+|S'(B)|\big)(A-B)^2.$$
\end{theorem}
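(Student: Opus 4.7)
The strategy is to reduce the operator inequality to a pointwise scalar inequality by combining the finite-dimensional reduction built into the construction of the relative entropy with the spectral theorem. By the Approximation property (Theorem \ref{thm: basic properties of relative entropy}(ii)) together with an increasing sequence $P_k \uparrow 1$ of finite-rank orthogonal projections, $\CH_S(P_k A P_k|P_k B P_k) \to \CH_S(A|B)$. On the right-hand side, strong continuity of the bounded functional calculus (applied after truncating $|S'|$ suitably to avoid the singularity at $0$) and Fatou-type trace lower semicontinuity let us pass to the limit. This reduces the statement to the finite-dimensional case with $A,B$ Hermitian and $0\leq A,B\leq 1$.

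In finite dimensions, diagonalize $A=\sum_i a_i|\phi_i\rangle\langle\phi_i|$ and $B=\sum_j b_j|\psi_j\rangle\langle\psi_j|$. Inserting the completeness relations $\sum_j|\langle\phi_i|\psi_j\rangle|^2=1$ (and its symmetric counterpart) under the trace, a direct computation gives
\[
\CH_S(A|B)=\sum_{i,j}|\langle\phi_i|\psi_j\rangle|^2\,\Psi_{b_j}(a_i),\qquad \Psi_b(a):=-\bigl(S(a)-S(b)-S'(b)(a-b)\bigr),
\]
and analogously
\[
\textup{Tr}\bigl((1+|S'(B)|)(A-B)^2\bigr)=\sum_{i,j}|\langle\phi_i|\psi_j\rangle|^2\,(1+|S'(b_j)|)(a_i-b_j)^2.
\]
Since the weights $|\langle\phi_i|\psi_j\rangle|^2$ are non-negative and identical on both sides, the operator inequality reduces to the pointwise scalar estimate
\begin{equation*}
\Psi_b(a)\ \geq\ C_S\,(1+|S'(b)|)(a-b)^2\qquad \text{for all } a,b\in[0,1].\tag{$\ast$}
\end{equation*}
Because $-S'$ is non-decreasing, $S$ is concave and $\Psi_b$ satisfies $\Psi_b(b)=\Psi_b'(b)=0$ and $\Psi_b''(a)=-S''(a)\geq 0$. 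Taylor's theorem with integral remainder then yields
\[
\Psi_b(a)=\int_{\min(a,b)}^{\max(a,b)}|a-z|\,\bigl(-S''(z)\bigr)\,dz.
\]
On any compact subinterval of $(0,1)$, $|S'|$ is bounded and $-S$ is uniformly strictly convex (since $-S'$ is non-constant), so $(\ast)$ holds with a constant depending only on $S$ in this regime.

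The main obstacle, and where I expect the proof to be technical, is the singular endpoint $b\to 0^+$, where $|S'(b)|\to+\infty$. Here I plan to exploit operator monotonicity of $-S'$ (which is strictly stronger than ordinary monotonicity): by L\"owner's theorem, $-S'$ admits an integral representation as a superposition of elementary Pick functions $(t-\,\cdot\,)^{-1}$ with $t\leq 0$, which forces a quantitative comparison of the form $-S''(z)\gtrsim (1+|S'(z)|)/z$ near the origin. Inserting this bound into the Taylor representation of $\Psi_b(a)$ and splitting the analysis between the regimes $|a-b|\lesssim b$ and $|a-b|\gtrsim b$ should yield $(\ast)$ with a constant $C_S$ depending only on $S$. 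This sharpening---from the naive $-S''\geq c>0$ used in the classical Klein inequality to a bound that scales like $|S'|$---is exactly what buys the improved coercive weight $1+|S'(B)|$ that will later control the density of perturbations $\gamma-\gamma_f$ in the high-frequency region via \eqref{eq: relative entropy lower bound}.
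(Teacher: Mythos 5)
The paper does not prove this theorem; it is quoted verbatim from \cite[Theorem 3]{LS3}, so there is no in-paper argument to compare against. Your overall strategy --- reduce to finite dimensions, insert $\sum_j|\langle\phi_i|\psi_j\rangle|^2=1$ to rewrite both sides as weighted sums over eigenvalue pairs, and deduce the scalar inequality $\Psi_b(a)\geq C_S(1+|S'(b)|)(a-b)^2$ with the Taylor integral remainder $\Psi_b(a)=\int_{\min(a,b)}^{\max(a,b)}|a-z|\,(-S''(z))\,dz$ --- is essentially the approach of Lewin--Sabin, and that part of the plan is sound.

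The plan breaks down, however, at the quantitative claim that L\"owner's integral representation forces $-S''(z)\gtrsim (1+|S'(z)|)/z$ near the origin. This is false, and the Fermi--Dirac entropy (which is $\mathcal{H}$-admissible) is already a counterexample: there $S'(x)=\log\frac{1-x}{x}$, so $-S''(x)=\tfrac{1}{x}+\tfrac{1}{1-x}\sim\tfrac{1}{x}$ while $(1+|S'(x)|)/x\sim\frac{\log(1/x)}{x}$, which is strictly larger by an unbounded factor $\log(1/x)$. More structurally, from the representation $g(x):=-S'(x)=\alpha+\beta x+\int\big(\tfrac{1}{t-x}-\tfrac{t}{1+t^2}\big)d\mu(t)$ with $\mu$ supported outside $(0,1)$, one has $\frac{1}{(t-x)^2}\geq\frac{c}{x\,|t-x|}$ only when $|t|\lesssim x$; the measure $\mu$ can (and generically does) have mass at scales $|t|\gg x$, so the kernel comparison you invoke is not available. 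What the representation does give, via Cauchy--Schwarz applied to $\nu:=\mu|_{[-1,0]}$ (which is a finite measure), is
\[
g'(x)=\beta+\int\frac{d\mu(t)}{(t-x)^2}\ \geq\ \frac{1}{\nu([-1,0])}\Big(\int_{[-1,0]}\frac{d\nu(t)}{x+|t|}\Big)^2 - C\ \gtrsim\ |g(x)|^2-C',
\]
that is, $-S''(z)\gtrsim|S'(z)|^2-C'$. This is strictly weaker than your claimed bound near $z=0$ (since $|S'(z)|=o(1/z)$ under the $L^{3/2}$ condition in the definition of $\mathcal{H}$-admissibility), but it is still enough to close the argument: combined with the fact that $g'>0$ on $(0,1]$ with $g'(1^-)>0$, one can run a two-regime analysis (roughly, $|a-b|\lesssim 1/|S'(b)|$ via the quadratic Taylor bound, and $|a-b|\gtrsim 1/|S'(b)|$ by integrating the differential inequality $(\,1/|g|\,)'\geq c$) to get $(\ast)$. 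So the gap is not in the architecture of your proof but in the specific pointwise bound you assert; as written, it would not survive even the most standard example, and the correct replacement is $-S''\gtrsim|S'|^2$ rather than $-S''\gtrsim|S'|/z$.
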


\subsection{Perturbation from quantum steady states}
Now, coming to our setting, we fix a reference state of the form $\gamma_f=f(-\Delta)$ with $f=(S')^{-1}$ such that $0\leq \gamma_f\leq 1$. By the conditions in Definition \ref{def: H-admissible}, for a self-adjoint operator $0\leq \gamma\leq 1$  on $L^2(\mathbb{R}^3)$, we define its relative entropy from the reference state $\gamma_f$ by 
$$\mathcal{H}(\gamma|\gamma_f)=\mathcal{H}_S(\gamma|\gamma_f):=\lim_{k\to\infty}\mathcal{H}(P_k\gamma P_k|P_k\gamma_fP_k),$$
where $P_k$ is a finite-rank projector such that $P_k\uparrow 1$ strongly in $L^2(\mathbb{R}^3)$. Then, collecting self-adjoint operators having finite relative entropy, we define the \textit{relative free energy space} by
$$\boxed{\quad\mathcal{K}_f:=\Big\{\gamma: L^2(\mathbb{R}^3)\to L^2(\mathbb{R}^3)\ \big|\ 0\leq\gamma=\gamma^*\leq1\textup{ and }\mathcal{H}(\gamma|\gamma_f)<\infty\Big\}.\quad}$$
We note that Klein's inequality (Theorem \ref{thm: Klein's inequality}) is stated as 
\begin{equation}\label{eq: Klein's inequality}
\mathcal{H}(\gamma|\gamma_f) \gtrsim\|\langle\nabla\rangle (\gamma-\gamma_f)\|_{\mathfrak{S}^2}^2\geq \|\gamma-\gamma_f\|_{\mathcal{H}^{\frac{1}{2}}}^2.
\end{equation}
Hence, $\gamma\in\mathcal{K}_f$ is a compact perturbation of $\gamma_f$. {It is also known that }the relative entropy can be approximated by that of a finite-rank smooth perturbation.

\begin{lemma}[Partial finite-dimensional approximation{$\textup{\cite[Corollary 2]{LS1}}$}]\label{lem: partial finite-dimensional approximation}
Suppose that $S: [0,1]\to \mathbb{R}^+$ is $\mathcal{H}$-admissible, and $\gamma_f=f(-\Delta)$ with $f=(S')^{-1}$.
Then, for any $\ga = \ga_f + Q \in \CK_f$, there exist finite-rank smooth operators $Q_k$ such that $0 \le  \ga_f +Q_k\le 1$ and  
$$\lim_{k\to\infty} \abs{\mathcal{H}(\gamma_f+Q_k|\gamma_f) - \mathcal{H}(\gamma_f+Q|\gamma_f)}
+ \lim_{k \to \I} \|Q_k - Q\|_{\FS^2} = 0.$$
\end{lemma}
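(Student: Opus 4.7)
My plan is a two-stage regularization: first a smooth frequency cutoff commuting with $\ga_f$ reduces $Q$ to a smooth bandlimited perturbation that automatically lies in $[0,1]$, and then within that bandlimited subspace one truncates further to finite rank, with a diagonal sequence combining the two.

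First, set $F_k := \chi(-\De \le k^2)$; since this Fourier multiplier commutes with $\ga_f$, the operator $\widetilde{\ga}_k := F_k \ga F_k + (I-F_k)\ga_f(I-F_k)$ is block-diagonal with respect to $\mathrm{ran}(F_k) \oplus \mathrm{ran}(I-F_k)$, and each diagonal block is a compression of an operator in $[0,1]$, so $\widetilde{\ga}_k \in [0,1]$. Thus $\widetilde Q_k := \widetilde{\ga}_k - \ga_f = F_k Q F_k$ has smooth (Paley--Wiener) kernel and $\ga_f + \widetilde Q_k \in [0,1]$. By additivity of $\CH_S$ over orthogonal direct sums and Theorem \ref{thm: basic properties of relative entropy}(ii) applied with $X = F_k$ (which satisfies $F_k^* F_k = F_k \to I$ strongly), $\CH(\ga_f + \widetilde Q_k|\ga_f) = \CH(F_k \ga F_k | F_k \ga_f F_k) \to \CH(\ga|\ga_f)$; moreover $\|\widetilde Q_k - Q\|_{\FS^2} \to 0$, since Klein's inequality \eqref{eq: Klein's inequality} puts $Q \in \FS^2$.

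For each fixed $k$, I would then make $\widetilde Q_k$ finite-rank. The hypothesis $\|\lg\xi\rg^{3/2+\ep}f(|\xi|^2)\|_{L^2} < \I$ implies $f \in L^1(\R^3)$, so $F_k\ga_f F_k$, and hence $F_k\ga F_k$, is trace class on $\mathrm{ran}(F_k)$ with smooth bandlimited eigenfunctions. Letting $\Pi_{k,n}$ be the orthogonal projection onto an $n$-dimensional subspace $V_{k,n} \subset \mathrm{ran}(F_k)$ containing the top eigenvectors of both $F_k\ga F_k$ and $F_k\ga_f F_k$, I set
\[
\ga_{k,n} := \Pi_{k,n} \ga \Pi_{k,n} + (F_k - \Pi_{k,n}) \ga_f (F_k - \Pi_{k,n}) + (I-F_k)\ga_f(I-F_k),
\]
which is block-diagonal over three orthogonal subspaces with each block a compression of an operator in $[0,1]$, so $\ga_{k,n} \in [0,1]$. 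The perturbation $Q_{k,n} := \ga_{k,n} - \ga_f$ has finite rank and a smooth bandlimited kernel. A diagonal selection $Q_k := Q_{k,n(k)}$ with $n(k) \to \I$ fast enough then produces the claimed sequence.

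The main obstacle will be verifying $\CH(\ga_f + Q_{k,n}|\ga_f) \to \CH(\ga_f + \widetilde Q_k|\ga_f)$ as $n \to \I$ with $k$ fixed, since $\Pi_{k,n}$ need not commute with $\ga_f$. The strategy is to compare via the pinching $\CP_{k,n}(X) := \Pi_{k,n} X \Pi_{k,n} + (F_k - \Pi_{k,n}) X (F_k - \Pi_{k,n}) + (I-F_k) X (I-F_k)$, a convex combination of unitary conjugations and hence a CPTP contraction. The monotonicity in Theorem \ref{thm: basic properties of relative entropy}(i) (via Stinespring dilation) gives $\CH(\ga_{k,n}|\CP_{k,n}(\ga_f)) \le \CH(\ga_{k,n}|\ga_f)$, whose left-hand side is additive over the three blocks and converges to $\CH(\ga_f + \widetilde Q_k|\ga_f)$ by another application of Theorem \ref{thm: basic properties of relative entropy}(ii). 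The matching upper bound will come from weak lower-semicontinuity (Theorem \ref{thm: basic properties of relative entropy}(iii)), with the residual commutator $\|[\ga_f, \Pi_{k,n}]\|_{\FS^2}$ controlled by Klein's inequality \eqref{eq: Klein's inequality} and vanishing as $V_{k,n}$ fills out $\mathrm{ran}(F_k)$.
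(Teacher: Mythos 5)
Your Stage 1 is sound and is essentially the paper's first step (the paper uses the two-sided cutoff $\Pi'_\de=\mathbbm{1}(\de\le f(-\De)\le 1-\de)$ rather than $\mathbbm{1}(-\De\le k^2)$, and that difference matters below). The finite-rank stage, however, has two genuine gaps. First, the construction itself is ill-posed: $F_k\ga_f F_k$ restricted to $\Ran F_k$ is the Fourier multiplier by $f(|\xi|^2)\mathbbm{1}(|\xi|\le k)$, which is not compact, has purely continuous spectrum and no eigenvectors, so it is certainly not ``trace class with smooth bandlimited eigenfunctions'' (integrability of the symbol never gives compactness of a translation-invariant operator), and the same goes for $F_k\ga F_k$ up to a Hilbert--Schmidt perturbation. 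Hence ``the top eigenvectors of both $F_k\ga F_k$ and $F_k\ga_f F_k$'' does not define $V_{k,n}$. (Also note the weighted $L^2_\xi$ hypothesis you invoke is not among the lemma's assumptions; only $\CH$-admissibility is.)

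Second, and more fundamentally, the convergence $\CH(\ga_f+Q_{k,n}|\ga_f)\to\CH(\ga_f+\widetilde Q_k|\ga_f)$ is not established. The pinching inequality $\CH(\ga_{k,n}|\CP_{k,n}(\ga_f))\le\CH(\ga_{k,n}|\ga_f)$ does not follow from Theorem \ref{thm: basic properties of relative entropy}$(i)$, which only covers compressions $A\mapsto XAX^*$ by a single contraction; monotonicity of $\CH_S$ under pinchings/CPTP maps (equivalently joint convexity) is not among the tools the paper provides. Even granting it, this only yields the lower bound $\liminf_n\CH(\ga_{k,n}|\ga_f)\ge\CH(\ga_f+\widetilde Q_k|\ga_f)$; for the needed upper bound on $\limsup_n$, weak lower semicontinuity goes in the wrong direction, and ``$\|[\ga_f,\Pi_{k,n}]\|_{\FS^2}\to 0$'' is neither justified (Klein's inequality controls $\ga-\ga_f$, not commutators of $\ga_f$ with arbitrary finite-rank projections, and the $\FS^2$ norm of such a commutator need not vanish just because $\Pi_{k,n}\to F_k$ strongly) nor, by itself, a mechanism for upper-semicontinuity of the entropy. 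This missing continuity is exactly the crux that the paper supplies quantitatively via the local Lipschitz estimate of \cite[Theorem 3]{LS1}, $|\CH(\ga_f+\Pi'_\de Q\Pi'_\de|\ga_f)-\CH(\ga_f+\Pi'_\de Q_k\Pi'_\de|\ga_f)|\ls_{\de'}\|\Pi'_\de(Q-Q_k)\Pi'_\de\|_{\FS^2}^2+\|\Pi'_\de Q\Pi'_\de\|_{\FS^2}\|\Pi'_\de(Q-Q_k)\Pi'_\de\|_{\FS^2}$, which is only available when both states are strictly bounded away from $0$ and $1$. That is why the paper (a) cuts off with $\mathbbm{1}(\de\le f\le 1-\de)$ (your cutoff bounds $\ga_f$ away from $0$ on $\Ran F_k$ but not away from $1$), (b) chooses $\de$ so that $\de$ and $-(1-\de)$ are not eigenvalues of $Q$ to get $\de'\le\Pi'_\de(\ga_f+Q)\Pi'_\de\le 1-\de'$, and (c) truncates with the spectral projections of $Q$ itself, $Q_k=P_kQP_k$, which preserves $0\le\ga_f+Q_k\le1$ without ever pinching the reference state, so no commutator issue arises. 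To repair your argument you would need either a proof of monotonicity under pinchings for $\CH_S$ plus a genuine upper-semicontinuity mechanism, or to switch to the paper's route.
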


A very important remark is that in this setting, the relative entropy has a useful lower bound (see Lemma \ref{lem: integral representation of the relative entropy} below). To see this, we recall the notion of the relative kinetic energy from a Fermi sea; we refer to \cite{FLLS1} for more detailed rigorous construction. Let $\Pi_\mu^-=\mathds{1}_{(-\Delta\leq\mu)}$ be the Fermi sea  with a chemical potential $\mu>0$. For a quantum state $0\leq\gamma=\Pi_\mu^-+Q\leq1$ with a finite-rank smooth perturbation $Q$, its relative kinetic energy is defined by
$$\textup{Tr}(-\Delta-\mu)(\gamma-\Pi_\mu^-)=\textup{Tr}(-\Delta-\mu)Q.$$
Note that $\textup{Tr}(-\Delta-\mu)Q$ is non-negative, because $\textup{Tr}(-\Delta-\mu)Q=\textup{Tr}(-\Delta-\mu)\Pi_\mu^+\gamma \Pi_\mu^++\textup{Tr}(\Delta+\mu)\Pi_\mu^- (1-\gamma)\Pi_\mu^-\geq0$, where $\Pi_\mu^+=1-\Pi_\mu^-=\mathds{1}_{(-\Delta>\mu)}$. Moreover, the inequality 
\begin{equation}\label{eq: relative kinetic energy controls schatten 2}
\big\|Q|\Delta+\mu|^{\frac{1}{2}}\big\|_{\mathfrak{S}^2}^2\leq \textup{Tr}(-\Delta-\mu)Q
\end{equation}
holds. We define the operator class $\mathcal{K}_\mu$ having finite relative energy from $\Pi_\mu^-$ by 
$$\mathcal{K}_\mu:=\Pi_\mu^-+\mathcal{X}_\mu,$$
where $\mathcal{X}_\mu$ is the completion of the smooth finite-rank self-adjoint operators on $L^2(\mathbb{R}^d)$ such that $-\Pi_\mu\leq Q\leq \Pi_\mu^+$ (so, $0\leq \Pi_\mu^-+Q\leq1$) with respect to the norm
$$\|Q\|_{\mathcal{X}_\mu}:=\|Q\|+\big\|Q|\Delta+\mu|^{\frac{1}{2}}\big\|_{\mathfrak{S}^2}+\sum_{\pm}\big\||\Delta+\mu|^{\frac{1}{2}}\Pi_\mu^\pm Q\Pi_\mu^\pm|\Delta+\mu|^{\frac{1}{2}}\big\|_{\mathfrak{S}^1}.$$
Then, by density, $\textup{Tr}(-\Delta-\mu)Q\geq0$ and \eqref{eq: relative kinetic energy controls schatten 2} holds for all $\gamma\in\mathcal{K}_\mu$.

\begin{lemma}[Lower bound for the relative entropy $\textup{\cite[(86)]{LS3}}$]\label{lem: integral representation of the relative entropy}
If $S: [0,1]\to \mathbb{R}^+$ is $\mathcal{H}$-admissible and $\gamma_f=f(-\Delta)$ with $f=(S')^{-1}$, then
$$\mathcal{H}(\gamma|\gamma_f)\geq\int_0^1 \textup{Tr}(-\Delta-\mu)Q_\mu d\lambda,$$
where $\mu=S'(\lambda)$ and $Q_\mu=\mathds{1}(\gamma\geq\lambda)-\Pi_\mu^-$.
\end{lemma}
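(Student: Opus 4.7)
The plan is to derive the stronger identity
$$\mathcal{H}(\gamma|\gamma_f) = \int_0^1 \textup{Tr}\bigl((-\Delta - \mu)\, Q_\mu\bigr)\, d\lambda, \qquad \mu = S'(\lambda),$$
by layer-cake decompositions in the spectral variables of $\gamma$ and $\gamma_f$, and then to extract the stated inequality through a finite-rank approximation. The starting point is the scalar identity $S(t) - S(0) = \int_0^1 S'(\lambda)\, \mathbbm{1}(t \geq \lambda)\, d\lambda$ for $t \in [0,1]$, which under functional calculus yields
$$S(\gamma) - S(\gamma_f) = \int_0^1 S'(\lambda)\,\bigl(\mathbbm{1}(\gamma \geq \lambda) - \mathbbm{1}(\gamma_f \geq \lambda)\bigr)\, d\lambda.$$
Because $-S'$ is operator monotone and not constant, $f = (S')^{-1}$ is strictly decreasing, so $\mathbbm{1}(\gamma_f \geq \lambda) = \mathbbm{1}(-\Delta \leq S'(\lambda)) = \Pi_\mu^-$ and the entropy difference becomes $\int_0^1 \mu\, Q_\mu\, d\lambda$. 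Using $f = (S')^{-1}$ again gives $S'(\gamma_f) = S'(f(-\Delta)) = -\Delta$, while the same layer cake produces $\gamma - \gamma_f = \int_0^1 Q_\mu\, d\lambda$; hence $S'(\gamma_f)(\gamma - \gamma_f) = \int_0^1 (-\Delta)\, Q_\mu\, d\lambda$. Subtracting the two expressions and taking traces yields the identity formally.

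To make this rigorous, I would first restrict to a dense subclass where every piece is separately finite. By Lemma \ref{lem: partial finite-dimensional approximation}, each $\gamma = \gamma_f + Q \in \mathcal{K}_f$ admits smooth finite-rank approximants $\gamma_f + Q_k$ with $\mathcal{H}(\gamma_f + Q_k | \gamma_f) \to \mathcal{H}(\gamma_f + Q|\gamma_f)$; for those the computation above delivers the exact identity
$$\mathcal{H}(\gamma_f + Q_k|\gamma_f) = \int_0^1 \textup{Tr}\bigl((-\Delta - \mu)\, (Q_\mu)_k\bigr)\, d\lambda, \qquad (Q_\mu)_k := \mathbbm{1}(\gamma_f + Q_k \geq \lambda) - \Pi_\mu^-.$$
Each integrand on the right is non-negative: since $\Pi_\mu^\pm$ commutes with $-\Delta - \mu$, cyclicity of the trace kills the off-diagonal blocks, and on the diagonal $\Pi_\mu^+ (Q_\mu)_k \Pi_\mu^+ \geq 0$ is paired with $(-\Delta - \mu)\Pi_\mu^+ \geq 0$, while $\Pi_\mu^- (Q_\mu)_k \Pi_\mu^- \leq 0$ is paired with $(-\Delta - \mu)\Pi_\mu^- \leq 0$. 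Sending $k \to \infty$ and invoking Fatou's lemma in the $\lambda$-integral then delivers the claimed inequality.

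The main obstacle is the $\lambda$-wise convergence needed for Fatou: one must verify $\liminf_k \textup{Tr}((-\Delta - \mu)\Pi_\mu^\pm(Q_\mu)_k \Pi_\mu^\pm) \geq \textup{Tr}((-\Delta - \mu)\Pi_\mu^\pm Q_\mu \Pi_\mu^\pm)$ for almost every $\lambda$. For $\lambda$ that is not an eigenvalue of $\gamma$, the strong resolvent convergence $\gamma_f + Q_k \to \gamma_f + Q$ (following from $\|Q_k - Q\|_{\mathfrak{S}^2} \to 0$) yields strong convergence of the spectral projectors $\mathbbm{1}(\gamma_f + Q_k \geq \lambda) \to \mathbbm{1}(\gamma \geq \lambda)$, and the positive-valued functional $A \mapsto \textup{Tr}((-\Delta-\mu)\Pi_\mu^\pm A \Pi_\mu^\pm)$ on non-negative operators is weakly lower semi-continuous, giving exactly the required direction of inequality. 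Since the set of eigenvalues of $\gamma$ is at most countable, this covers a.e.\ $\lambda \in (0,1)$. The identity is in fact an equality, but only the $\geq$ direction is needed for the subsequent coercivity analysis, so the reverse direction is not pursued.
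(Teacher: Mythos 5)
The paper offers no proof of this lemma; it cites $\textup{\cite[(86)]{LS3}}$ directly, so your proposal is a reconstruction from scratch. Your formal derivation is correct and finds the right ingredients: the scalar layer-cake formula lifted by functional calculus, the identity $S'(\gamma_f)=-\Delta$, the representation \eqref{eq:perturbation representation} of the perturbation, and the pointwise non-negativity of $\textup{Tr}\big((-\Delta-\mu)Q_\mu\big)$ via the $\Pi_\mu^\pm$ block decomposition (the same argument the paper uses just after \eqref{eq: relative kinetic energy controls schatten 2}). The final Fatou step, implemented via strong convergence $\mathbbm{1}(\gamma_f+Q_k\geq\lambda)\to\mathbbm{1}(\gamma\geq\lambda)$ for $\lambda$ outside the countable set of eigenvalues of $\gamma$ and lower semicontinuity of $A\mapsto\textup{Tr}(BA)$ for $A,B\geq 0$, is essentially sound; just be careful to phrase the $\Pi_\mu^-$ block correctly, since there you must pair $(\Delta+\mu)\Pi_\mu^-\geq0$ against $\Pi_\mu^-\big(1-\mathbbm{1}(\gamma_f+Q_k\geq\lambda)\big)\Pi_\mu^-\geq0$, the compressed operator $\Pi_\mu^-(Q_\mu)_k\Pi_\mu^-$ itself being non-positive.

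The genuine gap is the one-line assertion that the approximants of Lemma \ref{lem: partial finite-dimensional approximation} ``deliver the exact identity.'' Even for smooth finite-rank $Q_k$, the relative entropy $\mathcal{H}(\gamma_f+Q_k\,|\,\gamma_f)$ is \emph{defined} as $\lim_{n\to\infty}\mathcal{H}\big(P_n(\gamma_f+Q_k)P_n\,\big|\,P_n\gamma_f P_n\big)$, and in each finite-dimensional truncation the layer-cake computation produces $S'(P_n\gamma_f P_n)$, which is not $P_n(-\Delta)P_n$ since compression does not commute with functional calculus; your clean formula therefore does not transfer verbatim. What you actually need is that for these approximants the untruncated trace formula $\mathcal{H}(\gamma_f+Q_k\,|\,\gamma_f)=-\textup{Tr}\big(S(\gamma_f+Q_k)-S(\gamma_f)-S'(\gamma_f)Q_k\big)$ holds with each piece separately trace class (cf.\ \cite[Theorem 2]{LS1}); this is precisely where the frequency cutoff $\Pi'_\delta$ built into the approximants, which keeps the spectrum of $\gamma_f+Q_k$ away from $\{0,1\}$ on the subspace where the perturbation lives, is indispensable, and you say nothing about it. You then still need $\mathbbm{1}(\gamma_f+Q_k\geq\lambda)-\Pi_\mu^-$ to lie in the relative kinetic energy class $\mathcal{K}_\mu$ for a.e.\ $\lambda$ to justify exchanging $\textup{Tr}$ and $\int_0^1 d\lambda$. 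These verifications are not corner cases; they are the main content of a rigorous proof, and the sketch passes over exactly them.
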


\subsection{Lieb-Thirring inequalities}

The fundamental Lieb-Thirring inequality states that if $d=3$ and $\frac{5}{3}\leq r\leq 3$, then 
\begin{equation}\label{eq: classical LT ineq}
\|\rho_\gamma\|_{L^r}\lesssim \big\||\nabla|\gamma|\nabla|\big\|_{\mathfrak{S}^1}^{\frac{3(r-1)}{2r}}
\end{equation}
for any self-adjoint operator $\gamma$ such that $0\leq \gamma\leq 1$. In Frank-Lewin-Lieb-Seiringer \cite{FLLS1}, extending to quantum states perturbed from Fermi seas, it is shown that for $\Pi_\mu^-+Q\in \mathcal{K}_\mu$, 
\begin{equation}\label{eq: LT ineq, 0T}
\textup{Tr}(-\Delta-\mu)Q\gtrsim \int_{\mathbb{R}^3} \bigg\{\big(\rho_{\Pi_\mu^-}+\rho_Q(x)\big)^{\frac{5}{3}}-(\rho_{\Pi_\mu^-})^{\frac{5}{3}}-\frac{5}{3}(\rho_{\Pi_\mu^-})^{\frac{2}{3}}\rho_Q(x)\bigg\}dx.
\end{equation}
From this, we derive a form that is convenient to use in our setting. 

\begin{lemma}[Lieb-Thirring inequality for perturbations from a Fermi sea at zero termperature]\label{lem: LT ineq}
For any $\mu > 0$ and $\Pi_\mu^-+Q\in\mathcal{K}_\mu$, we have
\begin{equation}\label{eq: LT ineq}
\|\rho_{Q}\|_{H^{\frac{1}{2}}}\lesssim \textup{Tr}\big((-\Delta-\mu)Q\big)+\Big\{\textup{Tr}\big((-\Delta-\mu)Q\big)\Big\}^{\frac{3}{4}}+(\mu^{\frac{1}{4}}+\mu^{\frac{1}{2}})\Big\{\textup{Tr}\big((-\Delta-\mu)Q\big)\Big\}^{\frac{1}{2}}.
\end{equation}
\end{lemma}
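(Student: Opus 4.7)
I would combine the Frank-Lewin-Lieb-Seiringer inequality \eqref{eq: LT ineq, 0T} with the Schatten-$2$ bound \eqref{eq: relative kinetic energy controls schatten 2}, splitting $\|\rho_Q\|_{H^{1/2}}\le\|\rho_Q\|_{L^2}+\|\rho_Q\|_{\dot H^{1/2}}$ and handling each norm separately. Since $\rho_{\Pi_\mu^-}=c\mu^{3/2}$ is constant, the integrand on the right-hand side of \eqref{eq: LT ineq, 0T} is $g(\rho_Q(x))$ with
$g(u)=(c\mu^{3/2}+u)^{5/3}-(c\mu^{3/2})^{5/3}-\tfrac{5}{3}(c\mu^{3/2})^{2/3}u$;
a Taylor expansion of $g$ at $0$, valid on the admissible range $u\ge -c\mu^{3/2}$, yields the pointwise coercivity $g(u)\gtrsim\mu^{-1/2}u^2$ for $|u|\lesssim\mu^{3/2}$ and $g(u)\gtrsim |u|^{5/3}$ for $|u|\gtrsim\mu^{3/2}$. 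Splitting the spatial integration along $|\rho_Q(x)|\sim\mu^{3/2}$ produces
$\|\rho_Q\mathbbm{1}_{|\rho_Q|\le c\mu^{3/2}}\|_{L^2}\lesssim\mu^{1/4}T^{1/2}$ and $\|\rho_Q\mathbbm{1}_{|\rho_Q|>c\mu^{3/2}}\|_{L^{5/3}}\lesssim T^{3/5}$.

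For the $L^2$ part, the sub-threshold piece already gives $\mu^{1/4}T^{1/2}$. For the super-threshold piece I would use the Lyapunov interpolation $\|f\|_{L^2}\lesssim\|f\|_{L^{5/3}}^{5/8}\|f\|_{L^3}^{3/8}$ combined with the three-dimensional Sobolev embedding $\dot H^{1/2}\hookrightarrow L^3$, giving
$\|\rho_Q\mathbbm{1}_{|\rho_Q|>c\mu^{3/2}}\|_{L^2}\lesssim T^{3/8}\|\rho_Q\|_{\dot H^{1/2}}^{3/8}$.
Substituting the $\dot H^{1/2}$ bound from the next step and applying the weighted AM-GM inequality $\mu^{3/16}T^{9/16}\le\tfrac{3}{4}\mu^{1/4}T^{1/2}+\tfrac{1}{4}T^{3/4}$ then yields $\|\rho_Q\|_{L^2}\lesssim T^{3/4}+\mu^{1/4}T^{1/2}$.

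For the $\dot H^{1/2}$ part, the target is $\|\rho_Q\|_{\dot H^{1/2}}\lesssim T+\mu^{1/2}T^{1/2}$. Decomposing $\rho_Q$ in Fourier at the scale $\sqrt\mu$, the low-frequency piece (Fourier support in $|k|\le\sqrt\mu$) is handled by Bernstein's inequality: $\|\rho_Q^{\textup{low}}\|_{\dot H^{1/2}}\lesssim\mu^{1/4}\|\rho_Q\|_{L^2}\lesssim\mu^{1/2}T^{1/2}+\mu^{1/4}T^{3/4}$, and AM-GM absorbs the second term into $T+\mu^{1/2}T^{1/2}$. For the high-frequency piece I would use the representation $\hat{\rho_Q}(k)=(2\pi)^{-3}\int\tilde Q(\xi,\xi-k)\,d\xi$ together with a weighted Cauchy-Schwarz and the Schatten-$2$ bound \eqref{eq: relative kinetic energy controls schatten 2}, after a further dyadic decomposition of the frequency variables. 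The main obstacle is precisely this last step: the weight $||\xi|^2-\mu|^{1/2}$ in \eqref{eq: relative kinetic energy controls schatten 2} degenerates on the Fermi sphere $|\xi|=\sqrt\mu$ and, even away from the sphere, the three-dimensional frequency integrals arising from a naive Cauchy-Schwarz diverge at infinity. Careful decomposition relative to the Fermi scale, distinguishing the high-frequency bulk where $||\xi|^2-\mu|\sim|\xi|^2$ from a near-sphere shell of thickness $\sim\sqrt\mu$, is the essential device needed to extract the linear-in-$T$ contribution.
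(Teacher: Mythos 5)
Your treatment of the $L^2$ part is sound: applying \eqref{eq: LT ineq, 0T} directly to $Q$, the pointwise coercivity $g(u)\gtrsim\mu^{-1/2}u^2$ on $|u|\lesssim\mu^{3/2}$ and $g(u)\gtrsim|u|^{5/3}$ on $|u|\gtrsim\mu^{3/2}$ is correct, and the Lyapunov interpolation together with the absorption by weighted AM-GM closes the circular dependence with the $\dot H^{1/2}$ bound. (This $L^2$ route is structurally different from the paper, which instead applies the FLLS inequality only to $\Pi_{2\mu}^-Q\Pi_{2\mu}^-$, where $|\rho_{\Pi_{2\mu}^-Q\Pi_{2\mu}^-}|\lesssim\mu^{3/2}$ holds automatically and no interpolation is needed.)

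However, the high-frequency $\dot H^{1/2}$ piece is not a technical inconvenience you can fix with a cleverer dyadic decomposition; it is a genuine gap, and you have correctly diagnosed its source. The Schatten-$2$ bound \eqref{eq: relative kinetic energy controls schatten 2} supplies only one factor of $||\xi|^2-\mu|^{1/2}$ per variable, whereas the Cauchy-Schwarz in $\xi$ in the representation $\hat\rho_Q(k)=(2\pi)^{-3}\int\hat Q(\xi,k-\xi)\,d\xi$ needs to divide by both $||\xi|^2-\mu|^{1/2}$ and $||k-\xi|^2-\mu|^{1/2}$ to produce a convergent $d\xi$ integral; with only one you are left with $\int d\xi\,||k-\xi|^2-\mu|^{-1}$, which diverges at infinity in $\mathbb{R}^3$. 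No amount of dyadic bookkeeping rescues this, because the Schatten-$2$ control does not localize the operator kernel in frequency, and the slow tail $|\xi|^{-2}$ is not summable.

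What your plan is missing is the decomposition of the \emph{operator} $Q$ rather than the density $\rho_Q$, i.e.\ $Q=\sum_{\pm,\pm}\Pi_{2\mu}^\pm Q\Pi_{2\mu}^\pm$. The crucial observation is that the high-high block satisfies $\Pi_{2\mu}^+ Q\Pi_{2\mu}^+ = \Pi_{2\mu}^+\gamma\Pi_{2\mu}^+\ge 0$, which upgrades the available control from Schatten-$2$ to trace class: $\bigl\||\nabla|\Pi_{2\mu}^+Q\Pi_{2\mu}^+|\nabla|\bigr\|_{\mathfrak{S}^1}\lesssim\operatorname{Tr}((-\Delta-\mu)Q)$. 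Writing its spectral decomposition $\Pi_{2\mu}^+Q\Pi_{2\mu}^+=\sum_j c_j|\phi_j\rangle\langle\phi_j|$ with $c_j\ge 0$, the fractional Leibniz rule and Sobolev embedding give
\begin{align}
\bigl\||\nabla|^{1/2}\rho_{\Pi_{2\mu}^+Q\Pi_{2\mu}^+}\bigr\|_{L^2}\le\sum_j c_j\bigl\||\nabla|^{1/2}|\phi_j|^2\bigr\|_{L^2}\lesssim\sum_j c_j\|\nabla\phi_j\|_{L^2}^2=\operatorname{Tr}\bigl(|\nabla|\Pi_{2\mu}^+Q\Pi_{2\mu}^+|\nabla|\bigr),
\end{align}
giving the linear-in-$T$ contribution. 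For the high-low blocks $\Pi_{2\mu}^+Q\Pi_{2\mu}^-$, the Cauchy-Schwarz you envisaged actually \emph{does} close, because one of the two frequency variables is now confined to the ball $\{|\xi'|\le\sqrt{2\mu}\}$, so the offending $d\xi$-integral is over a bounded region; this produces the $\mu^{1/4}+\mu^{1/2}$ prefactors. The low-low block $\Pi_{2\mu}^-Q\Pi_{2\mu}^-$ has density Fourier-supported in $\{|\xi|\le 2\sqrt{2\mu}\}$, so Bernstein as in your plan applies. In short, your scalar decomposition of $\rho_Q$ loses the positivity of the high-high operator block, and it is exactly that positivity (via the trace-norm rather than Schatten-$2$ control) that closes the estimate.
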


\begin{proof}
For $\Pi_\mu^-+Q\in\mathcal{K}_\mu$, we split $Q=\Pi_{2\mu}^+ Q\Pi_{2\mu}^++\Pi_{2\mu}^+ Q\Pi_{2\mu}^-+\Pi_{2\mu}^- Q\Pi_{2\mu}^++\Pi_{2\mu}^-Q\Pi_{2\mu}^-$, and estimate them separately. 

\noindent \textbf{(Step 1. High-high term)}
We claim that 
$$\|\rho_{\Pi_{2\mu}^+ Q\Pi_{2\mu}^+}\|_{H^{\frac{1}{2}}}\lesssim\textup{Tr}\big((-\Delta-\mu)Q\big)+\Big\{\textup{Tr}\big((-\Delta-\mu)Q\big)\Big\}^{\frac{3}{4}}.$$
Indeed, since $0\leq\Pi_{2\mu}^+ Q\Pi_{2\mu}^+=\Pi_{2\mu}^+\gamma\Pi_{2\mu}^+\leq1$ is self-adjoint and non-negative, it follows from \eqref{eq: classical LT ineq} with $r=2$ that 
$$
\|\rho_{\Pi_{2\mu}^+ Q\Pi_{2\mu}^+}\|_{L^2}
\lesssim \big\||\nabla|\Pi_{2\mu}^+ Q\Pi_{2\mu}^+|\nabla|\big\|_{\mathfrak{S}^1}^{\frac{3}{4}}\sim \big(\textup{Tr}(-\Delta-\mu)Q\big)^{\frac{3}{4}}.
$$
On the other hand, by the spectral decomposition $
\Pi_{2\mu}^+ Q\Pi_{2\mu}^+ = \sum_{j=1}^\infty c_{\mu; j}|\phi_{\mu; j}\rangle\langle \phi_{\mu; j}|$ with $\{c_{\mu; j}\}_{j=1}^\infty\subset[0,1]$ and an orthonormal set $\{\phi_{\mu; j}\}_{j=1}^\infty$, the simple triangle inequality, the fractional Leibniz rule and the Sobolev inequality, we obtain that 
$$\begin{aligned}
	\big\||\nabla|^{\frac{1}{2}}\rho_{\Pi_{2\mu}^+ Q\Pi_{2\mu}^+}\big\|_{L^2}
	&\leq\sum_{j=1}^\infty c_{\mu; j}\big\||\nabla|^{\frac{1}{2}}\big(|\phi_{\mu; j}|^2\big)\big\|_{L^2}\lesssim \sum_{j=1}^\infty c_{\mu; j}\|\phi_{\mu; j}\|_{L^6}
	\||\nabla|^{\frac{1}{2}}\phi_{\mu; j}\|_{L^3}\\
	&\lesssim \sum_{j=1}^\infty c_{\mu; j}\|\nabla\phi_{\mu; j}\|_{L^2}^2
	=\textup{Tr}\big(|\nabla|\Pi_{2\mu}^+ Q\Pi_{2\mu}^+|\nabla|\big)\lesssim\textup{Tr}\big((-\Delta-\mu)Q\big).
\end{aligned}$$

\noindent \textbf{(Step 2. High-low and low-high terms)}
By symmetry, it suffices to consider the high-low term $\Pi_{2\mu}^+Q \Pi_{2\mu}^-$. For the proof, we apply the Plancherel theorem with 
$$\hat{\rho}_{\Pi_{2\mu}^+Q \Pi_{2\mu}^-}(\xi)=\frac{1}{(2\pi)^3}\int_{|\xi-\xi_1|\leq\sqrt{2\mu}\leq|\xi_1|} \hat{Q}(\xi_1,\xi-\xi_1) d\xi_1$$
and the H\"older inequality to obtain 
$$\|\rho_{\Pi_{2\mu}^+Q \Pi_{2\mu}^-}\|_{H^{\frac{1}{2}}}^2\lesssim \Bigg\{\sup_{\xi\in\mathbb{R}^3}\langle\xi\rangle \int_{|\xi-\xi_1|\leq\sqrt{2\mu}\leq|\xi_1|}\frac{d\xi_1}{|\xi_1|^2}\Bigg\}\big\|\big(|\nabla|\Pi_{2\mu}^+Q\big)(x,x')\big\|_{L_{x}^2L_{x'}^2}^2.$$
We observe that 
$$\begin{aligned}
\langle\xi\rangle \int_{|\xi-\xi_1|\leq\sqrt{2\mu}\leq|\xi_1|}\frac{d\xi_1}{|\xi_1|^2}&\lesssim \int_{|\xi-\xi_1|\leq\sqrt{2\mu}\leq|\xi_1|}\frac{1}{|\xi_1|^2}+\frac{1}{|\xi_1|}+\frac{|\xi-\xi_1|}{|\xi_1|^2}d\xi_1\\
&{\lesssim \int_{|\xi-\xi_1|\leq\sqrt{2\mu}}\frac{1}{\mu}+\frac{1}{\sqrt{\mu}}+\frac{|\xi-\xi_1|}{\mu}d\xi_1}\\
&\sim\mu^{\frac{1}{2}}+\mu.
\end{aligned}$$
On the other hand, we have by \eqref{eq: relative kinetic energy controls schatten 2} that
$$\begin{aligned}
	\big\|(|\nabla|\Pi_{2\mu}^+Q)(x,x')\big\|_{L_x^2L_{x'}^2}^2
	&=\big\||\nabla|\Pi_{2\mu}^+Q\big\|_{\mathfrak{S}^2}^2 \lesssim \big\||-\Delta-\mu|^{\frac{1}{2}}Q\big\|_{\mathfrak{S}^2}^2\ls \Tr((-\De-\mu)Q).
\end{aligned}$$
Thus, it follows that
$$\begin{aligned}
\|\rho_{\Pi_{2\mu}^\pm Q \Pi_{2\mu}^\mp}\|_{H^{\frac{1}{2}}}\lesssim (\mu^{\frac{1}{4}}+\mu^{\frac{1}{2}})\Big\{\textup{Tr}((-\Delta-\mu)Q)\Big\}^{\frac{1}{2}}.
\end{aligned}$$
\noindent \textbf{(Step 3. Low-low term)} Note that $0\leq\Pi_\mu^-+(\Pi_{2\mu}^-Q\Pi_{2\mu}^-)=\Pi_{2\mu}^-\gamma\Pi_{2\mu}^-\leq 1$. Hence, it follows from the Lieb-Thirring inequality \eqref{eq: LT ineq, 0T} that 
$$\begin{aligned}
\textup{Tr}(-\Delta-\mu)Q&\geq\textup{Tr}(-\Delta-\mu)\Pi_{2\mu}^-Q\Pi_{2\mu}^-\\
&\gtrsim \int_{\mathbb{R}^3} \Bigg\{\big(\rho_{\Pi_\mu^-}+\rho_{\Pi_{2\mu}^-Q\Pi_{2\mu}^-}(x)\big)^{\frac{5}{3}}-(\rho_{\Pi_\mu^-})^{\frac{5}{3}}-\frac{5}{3}(\rho_{\Pi_\mu^-})^{\frac{2}{3}}\rho_{\Pi_{2\mu}^-Q\Pi_{2\mu}^-}(x)\Bigg\}dx\\
&= (\rho_{\Pi_\mu^-})^{\frac{5}{3}}\int_{\mathbb{R}^3}\Bigg\{ \bigg(1+\frac{\rho_{\Pi_{2\mu}^-Q\Pi_{2\mu}^-}(x)}{\rho_{\Pi_\mu^-}}\bigg)^{\frac{5}{3}}-1-\frac{5}{3}\frac{\rho_{\Pi_{2\mu}^-Q\Pi_{2\mu}^-}(x)}{\rho_{\Pi_\mu^-}}\Bigg\}dx.
\end{aligned}$$
Note that $\rho_{\Pi_\mu^-}\sim \rho_{\Pi_{2\mu}^-}\sim \mu^{\frac{3}{2}}$ and $-\rho_{\Pi_\mu^-}\leq\rho_{\Pi_{2\mu}^-Q\Pi_{2\mu}^-}(x)\lesssim\rho_{\Pi_{2\mu}^-}$ for all $x\in\mathbb{R}^3$, because $-\Pi_{\mu}^-\leq \Pi_{2\mu}^-Q\Pi_{2\mu}^-\leq \Pi_{2\mu}^-$. Thus, Taylor's theorem yields 
\begin{equation}\label{eq: low-low LT ineq}
\begin{aligned}
\textup{Tr}(-\Delta-\mu)Q&\gtrsim (\rho_{\Pi_\mu^-})^{\frac{5}{3}}\int_{\mathbb{R}^3} \Bigg(\frac{\rho_{\Pi_{2\mu}^-Q\Pi_{2\mu}^-}(x)}{\rho_{\Pi_\mu^-}}\Bigg)^2dx\\
&\sim (\rho_{\Pi_\mu^-})^{-\frac{1}{3}}\|\rho_{\Pi_{2\mu}^-Q\Pi_{2\mu}^-}\|_{L^2}^2\sim \mu^{-\frac{1}{2}}\|\rho_{\Pi_{2\mu}^-Q\Pi_{2\mu}^-}\|_{L^2}^2.
\end{aligned}
\end{equation}
Consequently, we conclude that 
$$
\big\||\nabla|^{\frac{1}{2}}\rho_{\Pi_{2\mu}^- Q\Pi_{2\mu}^-}\big\|_{L^2_x}
\lesssim \mu^{\frac{1}{4}}\|\rho_{\Pi_{2\mu}^- Q\Pi_{2\mu}^-}\|_{L^2_x}
\lesssim\mu^{\frac{1}{2}}\Big\{\textup{Tr}((-\Delta-\mu)Q)\Big\}^{\frac{1}{2}},
$$
since the Fourier transform of $\rho_{\Pi_{2\mu}^- Q\Pi_{2\mu}^-}$ is supported in $|\xi|\leq 2\sqrt{2\mu}$.
\end{proof}

The Lieb-Thirring inequality \cite[Theorem 7]{LS2} asserts that $\rh_{\ga-\ga_f} \in L^2 + L^{\frac{5}{3}}$ when $\ga\in \CK_f$. As an application of Lemma \ref{lem: LT ineq}, it is extended as:

\begin{proposition}\label{main:Lieb-Thirring}
	Assume that $S:[0,1] \to \BR^+$ is $\CH$-admissible and $f = (S')^{-1}$.
	Then, for any $0 \le \ga \le 1$, we have
$$\|\rh_{\ga-\ga_f}\|_{H^{\frac{1}{2}}} \ls \CH(\ga|\ga_f) + \CH(\ga|\ga_f)^\tw.$$
\end{proposition}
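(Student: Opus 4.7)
The idea is to combine the layer-cake decomposition of $\gamma-\gamma_f$ given in \eqref{eq:perturbation representation} with the Fermi-sea Lieb--Thirring bound of Lemma~\ref{lem: LT ineq}, and then integrate in the spectral parameter $\lambda$ against the entropy lower bound of Lemma~\ref{lem: integral representation of the relative entropy}.

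First, taking the density of both sides of \eqref{eq:perturbation representation} gives
\[
\rho_{\gamma-\gamma_f}(x)=\int_0^1 \rho_{Q_\mu}(x)\,d\lambda,\qquad Q_\mu=\mathbbm{1}(\gamma\ge\lambda)-\Pi_\mu^-,\quad \mu=S'(\lambda),
\]
so by Minkowski's inequality in $H^{1/2}_x$,
\[
\|\rho_{\gamma-\gamma_f}\|_{H^{1/2}}\le \int_0^1 \|\rho_{Q_\mu}\|_{H^{1/2}}\,d\lambda.
\]
For each $\lambda$ with $\mu=S'(\lambda)>0$ (which, since $-S'$ is operator monotone, covers $(0,\lambda^*)$ for some $\lambda^*\in(0,1]$), the operator $\Pi_\mu^-+Q_\mu=\mathbbm{1}(\gamma\ge\lambda)$ satisfies $0\le\Pi_\mu^-+Q_\mu\le1$ and belongs to $\mathcal{K}_\mu$, so Lemma~\ref{lem: LT ineq} applies and yields
\[
\|\rho_{Q_\mu}\|_{H^{1/2}}\lesssim T_\mu + T_\mu^{3/4} + (\mu^{1/4}+\mu^{1/2})T_\mu^{1/2},\qquad T_\mu:=\textup{Tr}\big((-\Delta-\mu)Q_\mu\big)\ge 0.
\]
On the remaining (possibly empty) region $\{\mu\le 0\}$ near $\lambda=1$, $\Pi_\mu^-=0$ and $Q_\mu=\mathbbm{1}(\gamma\ge\lambda)$ is a non-negative projection; here $-\Delta-\mu\ge -\Delta$ together with the classical Lieb--Thirring inequality \eqref{eq: classical LT ineq} gives an analogous bound that can be absorbed into the same estimate.

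Next, writing $\mathcal{H}:=\mathcal{H}(\gamma|\gamma_f)$ and integrating in $\lambda$, Lemma~\ref{lem: integral representation of the relative entropy} gives $\int_0^1 T_\mu\,d\lambda\le\mathcal{H}$. Combined with Hölder's inequality on the bounded interval $(0,1)$, this yields
\[
\int_0^1 T_\mu\,d\lambda\le\mathcal{H},\qquad \int_0^1 T_\mu^{3/4}\,d\lambda\le\mathcal{H}^{3/4},\qquad \int_0^1 T_\mu^{1/2}\,d\lambda\le\mathcal{H}^{1/2}.
\]
For the $\mu$-weighted pieces, the admissibility condition $\|(S')_+\|_{L^{3/2}(0,1)}<\infty$ implies $\mu,\mu^{1/2}\in L^1_\lambda$ on the region $\{\mu>0\}$, so Cauchy--Schwarz gives
\[
\int_0^1\mu^{1/2}T_\mu^{1/2}\,d\lambda\le\Big(\int_0^1\mu\,d\lambda\Big)^{1/2}\mathcal{H}^{1/2}\lesssim \mathcal{H}^{1/2},\qquad \int_0^1\mu^{1/4}T_\mu^{1/2}\,d\lambda\lesssim\mathcal{H}^{1/2}.
\]
Summing the contributions and absorbing $\mathcal{H}^{3/4}\le\mathcal{H}+\mathcal{H}^{1/2}$ gives $\|\rho_{\gamma-\gamma_f}\|_{H^{1/2}}\lesssim\mathcal{H}+\mathcal{H}^{1/2}$, as claimed.

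The main obstacle is making the identity $\rho_{\gamma-\gamma_f}=\int_0^1\rho_{Q_\mu}\,d\lambda$ and the pointwise (in $\lambda$) application of Lemma~\ref{lem: LT ineq} rigorous at this regularity, since Klein's inequality \eqref{eq: Klein's inequality} only places $\gamma-\gamma_f$ in $\mathcal{H}^{1/2}$, which is barely enough to define the density. I would first prove the estimate for smooth finite-rank perturbations of $\gamma_f$, where the Fubini/layer-cake interchange and the computations of $T_\mu$ are elementary, and then pass to the limit via Lemma~\ref{lem: partial finite-dimensional approximation} using the weak lower semi-continuity of the relative entropy from Theorem~\ref{thm: basic properties of relative entropy}(iii) on the right-hand side and Fatou on the left.
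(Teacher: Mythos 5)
Your proof is correct and follows essentially the same route as the paper's: layer-cake decomposition \eqref{eq:perturbation representation}, Minkowski in $H^{1/2}$, Lemma~\ref{lem: LT ineq} applied to each $Q_\mu$, then integration in $\lambda$ using H\"older together with the entropy lower bound of Lemma~\ref{lem: integral representation of the relative entropy}. You are somewhat more explicit than the paper about handling the $\mu$-weights (via $\|(S')_+\|_{L^{3/2}(0,1)}<\infty$), the region where $\mu\le0$, and the regularization/limit argument, but the core idea and structure are identical.
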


\begin{proof}
	By the integral representation \eqref{eq:perturbation representation}, we have
	$$\|\rho_{Q}\|_{H^{\frac{1}{2}}} \leq \int_0^1 \|\rho_{Q_\mu}\|_{H^{\frac{1}{2}}} d\lambda,$$
	where $Q_\mu := \mathds{1}(\gamma\geq\lambda)-\Pi_\mu^-$.
	Then, applying Lemma \ref{lem: LT ineq} to $\|\rho_{Q_\mu}\|_{H^{\frac{1}{2}}}$ and H\"older's inequality in $\lambda$ and Lemma \ref{lem: integral representation of the relative entropy}, we conclude that 
	$\|\rho_{Q}\|_{H^{\frac{1}{2}}}
	\lesssim \mathcal{H}(\gamma|\gamma_f)+\mathcal{H}(\gamma|\gamma_f)^{\frac{3}{4}}+\mathcal{H}(\gamma|\gamma_f)^{\frac{1}{2}}$.
\end{proof}

\section{Density function estimates for linear Schr\"odinger flows}\label{sec: Density function estimates for linear Schrodinger flows}

In this section, we develop various density function estimates for linear Schr\"odinger flows perturbed by time-dependent real-valued potentials in 
$$\mathcal{B}_R^{1+\epsilon}(I):=\big\{g=g(t,x): \|g\|_{\mathcal{D}^{1+\epsilon}(I)}\leq R\big\},$$
where $\epsilon>0$ is a sufficiently small number, $R>0$, $I\subset\mathbb{R}$, and 
$$\|g\|_{\mathcal{D}^{1+\epsilon}(I)}:=\|g\|_{L_t^\infty(I; L_x^2)\cap L_t^2(I; \dot{H}_x^{1+\epsilon})}.$$
In later sections, they will be employed for the local well-posedness of the nonlinear problem (see Remark \ref{remark: norm choice remark} for the choice of the norm $\|\cdot\|_{\mathcal{D}^{1+\epsilon}(I)}$).

\subsection{Hilbert-Schmidt Sobolev initial data}

As a first step, we extend the density function estimates for the free flow (see Lemma \ref{lem: Strichartz estimates for operator kernels} and \ref{lem: Strichartz estimates for density functions}), and show continuous dependence on potential perturbations.

\begin{lemma}[Density function estimates for linear flows I]\label{Density estimates I}
Let $R\geq 1$. Suppose that $t_0\in I$ and $0<|I|\ll R^{-\frac{4}{1+2\epsilon}}$ for sufficiently small $\epsilon>0$. Then, for any $V, V'\in \mathcal{B}_R^{1+\epsilon}(I)$, we have
\begin{align}
\|\rho_{S_{V}(t,t_0)\gamma_0 S_{V'}(t,t_0)^*}\|_{\mathcal{D}^{1+\epsilon}(I)}&\leq c_1\big\||\nabla|^{\frac{1}{2}}\gamma_0|\nabla|^{\frac{1}{2}}\big\|_{\mathcal{H}^{\frac{1+2\epsilon}{4}}},\label{eq: Density estimates I, two-sided}\\
\|\rho_{S_{V}(t,t_0)_\star \gamma_0}-\rho_{S_{V'}(t,t_0)_\star \gamma_0}\|_{\mathcal{D}^{1+\epsilon}(I)}&\leq c_1|I|^{\frac{1+2\epsilon}{4}}\big\||\nabla|^{\frac{1}{2}}\gamma_0|\nabla|^{\frac{1}{2}}\big\|_{\mathcal{H}^{\frac{1+2\epsilon}{4}}}\|V-V'\|_{\mathcal{D}^{1+\epsilon}(I)}. \label{eq: Density estimates I, continuity}
\end{align}
\end{lemma}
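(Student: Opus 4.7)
The plan is to derive both bounds from Duhamel's formula applied to the two-sided propagated operator $\ga_{V,V'}(t) := S_V(t,t_0) \ga_0 S_{V'}(t,t_0)^*$, combined with the free-flow density estimate (Lemma \ref{lem: Strichartz estimates for density functions}) and a Picard iteration. Since $i\pa_t \ga_{V,V'} = [-\De, \ga_{V,V'}] + V \ga_{V,V'} - \ga_{V,V'} V'$, the Duhamel formula reads
\begin{equation*}
\ga_{V,V'}(t) = e^{i(t-t_0)\De} \ga_0 e^{-i(t-t_0)\De} - i\int_{t_0}^t e^{i(t-s)\De} \bigl(V(s) \ga_{V,V'}(s) - \ga_{V,V'}(s) V'(s)\bigr) e^{-i(t-s)\De}\, ds.
\end{equation*}
Lemma \ref{lem: Strichartz estimates for density functions} controls the density of the first term, so I would reduce (\ref{eq: Density estimates I, two-sided}) to estimating the density of the Duhamel correction in $\CD^{1+\ep}(I)$ with a small time-factor.

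The main auxiliary input is an inhomogeneous density-Strichartz bound of the form
\begin{equation*}
\Bigl\|\rho_{\int_{t_0}^t e^{i(t-s)\De} A(s) e^{-i(t-s)\De}\, ds}\Bigr\|_{\CD^{1+\ep}(I)} \ls \bigl\||\nabla|^{\tw} A(s) |\nabla|^{\tw}\bigr\|_{L^1_s(I;\CH^{(1+2\ep)/4})},
\end{equation*}
which I would prove by dualizing against $\phi\in L^2_t\dot H^{-1-\ep}_x$, rewriting $\lg\phi,\rho_\ga\rg_{tx}=\int\textup{Tr}(M_{\phi(t)}\ga(t))\,dt$ for the multiplication operator $M_{\phi(t)}$, applying Schatten-H\"older together with the time-symmetric form of Lemma \ref{lem: Strichartz estimates for density functions} applied to $M_{\phi(t)}$, and using the Christ-Kiselev lemma to pass from the full time integral to the retarded one. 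Combined with the operator-product bound
\begin{equation*}
\bigl\||\nabla|^{\tw}(V\ga - \ga V')|\nabla|^{\tw}\bigr\|_{\CH^{(1+2\ep)/4}} \ls \bigl(\|V\|_{H^{1+\ep}_x} + \|V'\|_{H^{1+\ep}_x}\bigr) \bigl\||\nabla|^{\tw}\ga|\nabla|^{\tw}\bigr\|_{\CH^{(1+2\ep)/4}}
\end{equation*}
(by the fractional Leibniz rule and 3D Sobolev embedding) and H\"older in time producing the factor $|I|^{(1+2\ep)/4}$, the Duhamel map becomes a contraction on a ball in the space of operators with density in $\CD^{1+\ep}(I)$, provided $|I|^{(1+2\ep)/4} R \ll 1$, which is exactly the smallness assumption. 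Picard iteration produces the unique fixed point $\ga_{V,V'}$ and yields (\ref{eq: Density estimates I, two-sided}).

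For the continuity statement (\ref{eq: Density estimates I, continuity}), I would apply the resolvent identity
\begin{equation*}
S_V(t,t_0) - S_{V'}(t,t_0) = -i\int_{t_0}^t S_V(t,s)\bigl(V(s)-V'(s)\bigr) S_{V'}(s,t_0)\, ds
\end{equation*}
to each factor in $\ga_{V,V} - \ga_{V',V'}$. The resulting expression has the same Duhamel structure as above but with a single explicit occurrence of $V-V'$; applying the inhomogeneous density bound, the a priori control (\ref{eq: Density estimates I, two-sided}) already proved, and H\"older in time yields the linear factor $\|V-V'\|_{L^2_t(I; H^{1+\ep}_x)}$ together with the expected time gain $|I|^{(1+2\ep)/4}$.

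The main obstacle I foresee is verifying the inhomogeneous density estimate at exactly the regularity $\CH^{(1+2\ep)/4}$ required to match the claimed bounds: one must place the two half-derivatives correctly around the inhomogeneity, check that Schatten-H\"older at $p=p'=2$ is compatible with Lemma \ref{lem: Strichartz estimates for density functions}, and ensure that Christ-Kiselev is legitimate at the nonendpoint time exponents involved. A secondary subtlety is the operator-product estimate in 3D for small $\ep$, where $H^{1+\ep}$ does not embed into $L^\infty$, forcing a careful use of the fractional Leibniz rule through intermediate $L^p$ spaces.
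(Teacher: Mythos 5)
Your proof is structurally aligned with the paper's at the Duhamel level, but there is a genuine gap at the product-estimate step that the argument, as written, cannot close.

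The issue is your claimed pointwise-in-time operator-product bound
\begin{equation*}
\bigl\||\nabla|^{\tw}(V\ga)|\nabla|^{\tw}\bigr\|_{\CH^{(1+2\ep)/4}}
\ls \|V\|_{H^{1+\ep}_x}\,\bigl\||\nabla|^{\tw}\ga|\nabla|^{\tw}\bigr\|_{\CH^{(1+2\ep)/4}}.
\end{equation*}
Unwinding the Hilbert--Schmidt norm, this is equivalent to the scalar product estimate
$\|Vu\|_{H^s_x}\ls\|V\|_{H^{1+\ep}_x}\|u\|_{H^s_x}$ in $\BR^3$ with $s=\tfrac{3+2\ep}{4}$. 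The standard Sobolev product criterion requires $(1+\ep)+s\geq s+\tfrac{3}{2}$, i.e.\ $\ep\geq\tfrac12$, so the bound fails for small $\ep$; precisely the issue you flag as a ``secondary subtlety'' is in fact the crux, because $H^{1+\ep}(\BR^3)$ does not control $L^\infty$ and no amount of intermediate $L^p$ juggling at fixed time rescues it. As a consequence, your proposed contraction ``on a ball in the space of operators with density in $\CD^{1+\ep}(I)$'' has no norm in which the Duhamel correction gains a power of $|I|$: after $L^1_t$ H\"older you are left needing a pointwise-in-time bound on $\||\nabla|^{\tw}V\ga|\nabla|^{\tw}\|_{\CH^{(1+2\ep)/4}}$ that simply is not available from $L^\infty_t\CH^{(1+2\ep)/4}$ control of $|\nabla|^{\tw}\ga|\nabla|^{\tw}$.

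The paper resolves this by running the \emph{a priori} estimate on the pair $\bigl(\|\rho_\ga\|_{\CD^{1+\ep}(I)},\ \||\nabla|^{\tw}\ga|\nabla|^{\tw}\|_{\CS^{(1+2\ep)/4}(I)}\bigr)$, i.e.\ including the operator Strichartz norm in the bootstrap. The key input, Lemma \ref{lem: useful lemma}, proves
\begin{equation*}
\big\||\nabla|^{\tw}V\gamma|\nabla|^{\tw}\big\|_{L_{t}^1(I; \mathcal{H}^{\frac{1+2\epsilon}{4}})}
\ls |I|^{\frac{1+2\epsilon}{4}}\|V\|_{L_t^2(I; H_x^{1+\epsilon})}\big\||\nabla|^{\tw}\gamma|\nabla|^{\tw}\big\|_{\mathcal{S}^{\frac{1+2\epsilon}{4}}(I)},
\end{equation*}
using the fractional Leibniz rule to place $|\nabla|^{\tw}\ga|\nabla|^{\tw}$ in $L_t^{\frac{4}{1-2\ep}}W_x^{\frac{1+2\ep}{4},\frac{3}{1+\ep}}L_{x'}^2$, a non-endpoint admissible norm controlled by the operator Strichartz norm; the Strichartz gain in the spatial Lebesgue exponent ($\tfrac{3}{1+\ep}>2$) is exactly what supplies the missing room in H\"older/Sobolev. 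So the fix is not a different Leibniz manipulation at fixed time but a different function space for the bootstrap, with Lemma \ref{lem: Strichartz estimates for operator kernels} supplying the inhomogeneous operator Strichartz bound. Two smaller remarks: first, for the inhomogeneous \emph{density} estimate you invoke, since the right-hand side is $L^1_t$ one only needs Minkowski and the homogeneous Lemma \ref{lem: Strichartz estimates for density functions}; Christ--Kiselev and duality are unnecessary machinery. Second, there is no need for a Picard iteration since $\ga(t)=S_V(t,t_0)\ga_0 S_{V'}(t,t_0)^*$ is already defined by the propagators; the lemma is an a priori bound that closes via the smallness $|I|^{\frac{1+2\ep}{4}}R\ll1$, which is the same numerology you found.
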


The following lemma is elementary but useful in our analysis.
\begin{lemma}\label{lem: useful lemma}
Suppose that $|I|\leq1$. Then, for small $\epsilon>0$, we have
$$\big\||\nabla|^{\frac{1}{2}}V\gamma|\nabla|^{\frac{1}{2}}\big\|_{L_{t}^1(I; \mathcal{H}^{\frac{1+2\epsilon}{4}})}\lesssim |I|^{\frac{1+2\epsilon}{4}} \|V\|_{\mathcal{D}^{1+\epsilon}(I)}\big\||\nabla|^{\frac{1}{2}}\gamma|\nabla|^{\frac{1}{2}}\big\|_{\mathcal{S}^{\frac{1+2\epsilon}{4}}(I)}.$$
\end{lemma}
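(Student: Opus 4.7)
Write $\alpha := (1+2\epsilon)/4$ and $D := |\nabla|^{1/2}\langle\nabla\rangle^\alpha$, a fractional-derivative operator of order $\beta := \alpha + 1/2 = (3+2\epsilon)/4 \in (0,1)$. Unwinding the $\mathcal{H}^\alpha$- and $\mathcal{S}^\alpha$-norms, the lemma is equivalent to
$$\|DV\gamma D\|_{L^1_t(I;\mathfrak{S}^2)} \lesssim |I|^\alpha \|V\|_{L^2_t(I;H^{1+\epsilon}_x)} \|D\gamma D\|_{\mathcal{S}(I)}.$$
Since the rightmost $D$ acts on the primed variable and commutes with $V(x)$, the kernel of $DV\gamma D$ at fixed $x'$ is $D_x\bigl[V(x)\tilde\gamma(x,x')\bigr]$ with $\tilde\gamma := \gamma D$. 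I will apply the Kato-Ponce fractional Leibniz rule (valid since $0<\beta<1$) in the $x$-variable at each $x'$,
$$\|D_x[V\tilde\gamma(\cdot,x')]\|_{L^2_x} \lesssim \|DV\|_{L^{p_1}_x}\|\tilde\gamma(\cdot,x')\|_{L^{p_2}_x} + \|V\|_{L^{q_1}_x}\|D\tilde\gamma(\cdot,x')\|_{L^{q_2}_x},$$
then take $L^2_{x'}$ and $L^1_t$, using Minkowski's integral inequality (which gives $L^2_{x'}L^p_x \le L^p_x L^2_{x'}$ whenever $p \ge 2$) to reorder the norms into the form appearing in $\mathcal{S}(I)$.

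The exponents are forced by three simultaneous constraints: a Sobolev embedding in 3D to absorb $V$ or $DV$ into $\|V\|_{H^{1+\epsilon}}$; Strichartz-admissibility on the $\gamma$-side; and, for the first Leibniz term where only the one-sided $\tilde\gamma=\gamma D$ appears, a Hardy-Littlewood-Sobolev step to reinstate the missing second $D$. I take the admissible pair
$(q_\gamma, r_\gamma) := \bigl(4/(1-2\epsilon),\, 3/(1+\epsilon)\bigr)$, which satisfies $2/q_\gamma+3/r_\gamma=3/2$. For the first Leibniz term, set $p_1=12/(5-2\epsilon)$ (so that Sobolev gives $\|DV\|_{L^{p_1}_x}\lesssim \|V\|_{H^{1+\epsilon}_x}$) and $p_2=12/(1+2\epsilon)$; then writing $\gamma D = D^{-1}(D\gamma D)$ and invoking vector-valued HLS in $x$ yields
$$\|\gamma D\|_{L^{p_2}_x L^2_{x'}} \lesssim \|D\gamma D\|_{L^{r_\gamma}_x L^2_{x'}}, \qquad \tfrac{1}{r_\gamma}=\tfrac{1}{p_2}+\tfrac{\beta}{3}=\tfrac{1+\epsilon}{3},$$
and this already sits in $\mathcal{S}(I)$. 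For the second Leibniz term, set $q_1=6/(1-2\epsilon)$ (so $H^{1+\epsilon}\hookrightarrow L^{q_1}$) and $q_2=r_\gamma$, so that $D\tilde\gamma = D\gamma D$ lands directly in $\|D\gamma D\|_{L^{r_\gamma}_x L^2_{x'}} \le \|D\gamma D\|_{\mathcal{S}(I)}$.

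The time integration then reproduces the sharp weight $|I|^\alpha$. Hölder in time with $1/p_t + 1/q_\gamma = 1$ forces $p_t = 4/(3+2\epsilon) < 2$, and the elementary inclusion $\|V\|_{L^{p_t}_t(I)} \le |I|^{1/p_t - 1/2}\|V\|_{L^2_t(I)}$ produces exactly
$$|I|^{1/p_t - 1/2} = |I|^{(3+2\epsilon)/4 - 1/2} = |I|^{(1+2\epsilon)/4} = |I|^\alpha.$$
I expect the main obstacle to be the Leibniz term carrying the single-sided operator $\gamma D$, which is not intrinsically controlled by $\mathcal{S}(I)$; the HLS step is what rescues it, and the delicate point is the arithmetic coincidence that the Sobolev exponent needed for $V$, the HLS pairing, and Strichartz admissibility all close at the same pair $(q_\gamma, r_\gamma) = (4/(1-2\epsilon), 3/(1+\epsilon))$, producing the sharp time weight $|I|^\alpha$ rather than the looser $|I|^{1/2}$ one would get from a naive Hölder.
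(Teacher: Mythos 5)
Your proof is correct and takes essentially the same route as the paper's: a fractional Leibniz rule in $x$ with the identical exponent splitting $\big(\tfrac{12}{5-2\epsilon},\tfrac{12}{1+2\epsilon}\big)$ and $\big(\tfrac{6}{1-2\epsilon},\tfrac{3}{1+\epsilon}\big)$, Sobolev embeddings of $H^{1+\epsilon}_x$ for the $V$-factors, a Sobolev/HLS step to upgrade the one-sided $\gamma D$ to $D\gamma D$ (which the paper leaves implicit in passing from $\|u\|_{L^{12/(1+2\epsilon)}}$ to $\||\nabla|^{1/2}u\|_{W^{(1+2\epsilon)/4,\,3/(1+\epsilon)}}$), and H\"older in time against the admissible pair $\big(\tfrac{4}{1-2\epsilon},\tfrac{3}{1+\epsilon}\big)$ yielding the weight $|I|^{(1+2\epsilon)/4}$.
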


\begin{proof}
By the fractional Leibniz rule and the Sobolev inequality, we obtain
$$\begin{aligned}
\big\||\nabla|^{\frac{1}{2}}(Vu)\big\|_{H_x^{\frac{1+2\epsilon}{4}}}
&\lesssim \|V\|_{W_x^{\frac{3+2\epsilon}{4}, \frac{12}{5-2\epsilon}}}\|u\|_{L_x^{\frac{12}{1+2\epsilon}}}
+\|V\|_{L_x^{\frac{6}{1-2\epsilon}}}\||\na|^\tw u\|_{W_x^{\frac{1+2\epsilon}{4},\frac{3}{1+\epsilon}}}\\
&\lesssim \|V\|_{H_x^{1+\epsilon}}\||\nabla|^{\frac{1}{2}}u\|_{W_x^{\frac{1+2\epsilon}{4},\frac{3}{1+\epsilon}}},
\end{aligned}$$
and consequently, 
$$\big\||\nabla|^{\frac{1}{2}}V\gamma|\nabla|^{\frac{1}{2}}\big\|_{L_{t}^1(I; \mathcal{H}^{\frac{1+2\epsilon}{4}})}\lesssim  |I|^{\frac{1+2\epsilon}{4}}\|V\|_{L_t^2(I;H_x^{1+\epsilon})}\big\||\nabla|^{\frac{1}{2}}\gamma |\nabla|^{\frac{1}{2}}\big\|_{L_{t}^{\frac{4}{1-2\epsilon}}(I;W_x^{\frac{1+2\epsilon}{4}, \frac{3}{1+\epsilon}}H_{x'}^{\frac{1+2\epsilon}{4}})}.$$
Since $(\frac{4}{1-2\epsilon}, \frac{3}{1+\epsilon})$ is admissible and by the definition, $\|V\|_{L_t^2(I;H_x^{1+\epsilon})}\leq |I|^{\frac{1}{2}}\|V\|_{L_t^\infty(I;L_x^2)}+\|V\|_{L_t^2(I;\dot{H}_x^{1+\epsilon})}\leq \|V\|_{\mathcal{D}^{1+\epsilon}(I)}$, the lemma follows.
\end{proof}

\begin{proof}[Proof of Lemma \ref{Density estimates I}]
For \eqref{eq: Density estimates I, two-sided}, let $\gamma(t):=S_{V}(t,t_0)\gamma_0 S_{V'}(t,t_0)^*$ solving the integral equation
\begin{equation}\label{Duhamel for linear evolution}
\gamma(t)=e^{i(t-t_0)\Delta}\gamma_0e^{-i(t-t_0)\Delta}-i\int_{t_0}^t e^{i(t-t_1)\Delta} (V\gamma-\gamma V')(t_1)e^{-i(t-t_1)\Delta}dt_1.
\end{equation}
Then, it follows from Strichartz estimates for the free flow (see Lemma \ref{lem: Strichartz estimates for operator kernels} and \ref{lem: Strichartz estimates for density functions}) and Lemma \ref{lem: useful lemma} that 
$$\begin{aligned}
&\|\rho_\gamma\|_{\mathcal{D}^{1+\epsilon}(I)}+\big\||\nabla|^{\frac{1}{2}}\gamma|\nabla|^{\frac{1}{2}}\big\|_{\mathcal{S}^{\frac{1+2\epsilon}{4}}(I)}\\
&\leq c_0\big\||\nabla|^{\frac{1}{2}}\gamma_0|\nabla|^{\frac{1}{2}}\big\|_{\mathcal{H}^{\frac{1+2\epsilon}{4}}}+c_0\big\||\nabla|^{\frac{1}{2}}V\gamma|\nabla|^{\frac{1}{2}}\big\|_{L_{t}^1(I; \mathcal{H}^{\frac{1+2\epsilon}{4}})}+c_0\big\||\nabla|^{\frac{1}{2}}V'\gamma|\nabla|^{\frac{1}{2}}\big\|_{L_{t}^1(I; \mathcal{H}^{\frac{1+2\epsilon}{4}})}\\
&\leq c_0\big\||\nabla|^{\frac{1}{2}}\gamma_0|\nabla|^{\frac{1}{2}}\big\|_{\mathcal{H}^{\frac{1+2\epsilon}{4}}}+2\tilde{c}_0|I|^{\frac{1+2\epsilon}{4}}R\big\||\nabla|^{\frac{1}{2}}\gamma|\nabla|^{\frac{1}{2}}\big\|_{\mathcal{S}^{\frac{1+2\epsilon}{4}}(I)}.
\end{aligned}$$
Hence, taking an interval $I$ such that $|I|\leq(4\tilde{c}_0R)^{-\frac{4}{1+2\epsilon}}$, we prove that
\begin{equation}\label{eq: Density estimates I, two-sided'}
\|\rho_\gamma\|_{\mathcal{D}^{1+\epsilon}(I)}+\big\||\nabla|^{\frac{1}{2}}\gamma|\nabla|^{\frac{1}{2}}\big\|_{\mathcal{S}^{\frac{1+2\epsilon}{4}}(I)}\leq 2c_0\big\||\nabla|^{\frac{1}{2}}\gamma_0|\nabla|^{\frac{1}{2}}\big\|_{\mathcal{H}^{\frac{1+2\epsilon}{4}}}.
\end{equation}
For \eqref{eq: Density estimates I, continuity}, we write the difference between $\gamma(t):=S_{V}(t,t_0)_\star \gamma_0$ and $\gamma'(t):=S_{V'}(t,t_0)_\star \gamma_0$ as 
$$\gamma(t)-\gamma'(t)=-i\int_{t_0}^t e^{i(t-t_1)\Delta}\big([V-V', \gamma]+[V',\gamma-\gamma']\big)(t_1)e^{i(t-t_1)\Delta}dt_1.$$
Indeed, repeating the argument to show \eqref{eq: Density estimates I, two-sided'} and by the assumption in $I$, that is, $0<|I|\ll R^{-\frac{4}{1+2\epsilon}}\leq1$, one can show that 
$$\begin{aligned}
&\|\rho_{\gamma}-\rho_{\gamma'}\|_{\mathcal{D}^{1+\epsilon}(I)}+\big\||\nabla|^{\frac{1}{2}}(\gamma-\gamma')|\nabla|^{\frac{1}{2}}\big\|_{\mathcal{S}^{\frac{1+2\epsilon}{4}}(I)}\\
&\leq \tilde{c}_0|I|^{\frac{1+2\epsilon}{4}}{\|V-V'\|_{\mathcal{D}^{1+\epsilon}(I)}}\big\||\nabla|^{\frac{1}{2}}\gamma|\nabla|^{\frac{1}{2}}\big\|_{\mathcal{S}^{\frac{1+2\epsilon}{4}}(I)}\\
&\quad+\tilde{c}_0|I|^{\frac{1+2\epsilon}{4}}{\|V'\|_{\mathcal{D}^{1+\epsilon}(I)}}\big\||\nabla|^{\frac{1}{2}}(\gamma-\gamma')|\nabla|^{\frac{1}{2}}\big\|_{\mathcal{S}^{\frac{1+2\epsilon}{4}}(I)}\\
&\leq 2c_0\tilde{c}_0|I|^{\frac{1+2\epsilon}{4}}\big\||\nabla|^{\frac{1}{2}}\gamma_0|\nabla|^{\frac{1}{2}}\big\|_{\mathcal{H}^{\frac{1+2\epsilon}{4}}}{\|V-V'\|_{\mathcal{D}^{1+\epsilon}(I)}}+\frac{1}{2}\big\||\nabla|^{\frac{1}{2}}(\gamma-\gamma')|\nabla|^{\frac{1}{2}}\big\|_{\mathcal{S}^{\frac{1+2\epsilon}{4}}(I)}.
\end{aligned}$$
Thus, \eqref{eq: Density estimates I, continuity} follows.
\end{proof}

\subsection{Perturbation from a Fermi sea at zero temperature}

Next, we consider the linear Schr\"odinger flows from initial data having finite relative kinetic energy, and prove analogous density function estimates. 

\begin{lemma}[Density function estimates for linear flows II]\label{Density estimates II}
Let $R\geq 1$. Suppose that $t_0\in I$ and $0<|I|\ll R^{-\frac{4}{1+2\epsilon}}$ for sufficiently small $\epsilon>0$. 
Then, for any $\gamma_0=\Pi_\mu^-+Q_0\in\mathcal{K}_\mu$ with $\mu>0$, $V, V'\in \mathcal{B}_R^{1+\epsilon}(I)$ and $t_0\in I$, we have
\begin{equation}\label{eq: Density estimates II, one-sided}
\|\rho_{S_{V}(t,t_0)_\star Q_0}\|_{\mathcal{D}^{1+\epsilon}(I)}\leq c_2\big\{\textup{Tr}(-\Delta-\mu)Q_0+1+\mu^{\frac{3}{2}+\epsilon}\big\}
\end{equation}
and
\begin{equation}\label{eq: Density estimates II, continuity}
\begin{aligned}
&\|\rho_{S_{V}(t,t_0)_\star Q_0}-\rho_{S_{V'}(t,t_0)_\star Q_0}\|_{\mathcal{D}^{1+\epsilon}(I)}\\
&\quad \leq c_2 |I|^{\frac{1+2\epsilon}{4}}\big\{\textup{Tr}(-\Delta-\mu)Q_0+1+\mu^{\frac{3}{2}+\epsilon}\big\}{\|V-V'\|_{\mathcal{D}^{1+\epsilon}(I)}}.
\end{aligned}
\end{equation}
\end{lemma}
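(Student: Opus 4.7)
The plan is to mirror the proof of Lemma \ref{Density estimates I}. By Duhamel,
\begin{align}
S_V(t,t_0)_\star Q_0 = e^{i(t-t_0)\Delta}Q_0 e^{-i(t-t_0)\Delta} - i\int_{t_0}^t e^{i(t-t_1)\Delta}[V,\,S_V(t_1,t_0)_\star Q_0]\,e^{-i(t-t_1)\Delta}\,dt_1.
\end{align}
I would take the density, apply Lemma \ref{lem: Strichartz estimates for operator kernels} together with Lemma \ref{lem: useful lemma} to the commutator term, and use the smallness $|I|\ll R^{-4/(1+2\epsilon)}$ to close by bootstrap exactly as in Lemma \ref{Density estimates I}. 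The whole argument then reduces to the \emph{free} density estimate $\|\rho_{e^{i(t-t_0)\Delta}Q_0 e^{-i(t-t_0)\Delta}}\|_{\CD^{1+\epsilon}(I)} \lesssim \textup{Tr}((-\Delta-\mu)Q_0)+1+\mu^{3/2+\epsilon}$. The continuity estimate \eqref{eq: Density estimates II, continuity} will follow by applying the same scheme to the difference $S_V(t,t_0)_\star Q_0 - S_{V'}(t,t_0)_\star Q_0$, whose Duhamel expansion produces $[V-V',\,S_V(t_1,t_0)_\star Q_0]$ (controlled by \eqref{eq: Density estimates II, one-sided} combined with Lemma \ref{lem: useful lemma}, giving the $|I|^{(1+2\epsilon)/4}\|V-V'\|_{L^2_t H^{1+\epsilon}_x}$ prefactor) plus $[V',\,S_V(t_1,t_0)_\star Q_0 - S_{V'}(t_1,t_0)_\star Q_0]$ (absorbed by smallness of $|I|$).

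For the free density I would split $Q_0 = A^{++}+A^{+-}+A^{-+}+A^{--}$ with $A^{\sigma\tau}=\Pi_{2\mu}^\sigma Q_0\Pi_{2\mu}^\tau$, as motivated in the introduction. For the high-high piece $A^{++}=\Pi_{2\mu}^+\gamma_0\Pi_{2\mu}^+\in[0,1]$, the high-frequency support gives $|\nabla|^2 \leq 2(-\Delta-\mu)$, and a trace-cycling argument exploiting $0 \le A^{++} \le 1$ combined with the $\CK_\mu$-bound $\||\Delta+\mu|^{1/2}Q_0\|_{\FS^2}^2 \le \textup{Tr}((-\Delta-\mu)Q_0)$ controls the two-sided weighted Schatten-2 norm $\||\nabla|^{1/2}A^{++}|\nabla|^{1/2}\|_{\CH^{(1+2\epsilon)/4}}$ by a sum of terms of the form $\mu^a\textup{Tr}((-\Delta-\mu)Q_0)^b$, so Lemma \ref{lem: Strichartz estimates for density functions} supplies the desired density bound. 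The cross pieces $A^{+-},A^{-+}$ are treated in the same vein: the low-frequency factor $\Pi_{2\mu}^-$ absorbs half of the weight as a bounded Fourier multiplier with norm $\lesssim(1+\mu)^{(3+2\epsilon)/8}$, leaving the high-frequency side to be handled by the same one-sided relative-kinetic-energy inequality.

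The main obstacle will be the low-low piece $A^{--}=\Pi_{2\mu}^-Q_0\Pi_{2\mu}^-$: since the Fermi sea $\Pi_\mu^-$ sits inside $\Pi_{2\mu}^- L^2$ and has infinite rank, the Hilbert--Schmidt norm $\|A^{--}\|_{\FS^2}$ need not be finite, so Lemma \ref{lem: Strichartz estimates for density functions} is not directly applicable. My plan is to exploit the commutation $[e^{it\Delta},\Pi_{2\mu}^-]=0$ to rewrite $\rho_{e^{it\Delta}A^{--}e^{-it\Delta}} = \rho_{\Pi_{2\mu}^-\,e^{it\Delta}Q_0 e^{-it\Delta}\,\Pi_{2\mu}^-}$, and then apply the low-frequency Lieb--Thirring inequality from Step 3 of the proof of Lemma \ref{lem: LT ineq} at each fixed $t$. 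Since $e^{it\Delta}Q_0 e^{-it\Delta}$ still perturbs $\Pi_\mu^-$ to a state in $[0,1]$, and $\textup{Tr}((-\Delta-\mu)\cdot)$ is conserved along the free flow, this yields the pointwise-in-$t$ bound $\|\rho_{e^{it\Delta}A^{--}e^{-it\Delta}}(t)\|_{L^2_x}^2 \lesssim \mu^{1/2}\textup{Tr}((-\Delta-\mu)Q_0)$; the bounded spatial Fourier support $|\xi| \le 2\sqrt{2\mu}$ then trades $\dot{H}^{1+\epsilon}_x$ for $L^2_x$ at a cost of $\mu^{(1+\epsilon)/2}$. A final round of Young's inequality $\mu^a\textup{Tr}^{1/2} \le \mu^{2a}+\textup{Tr}$ consolidates all four contributions into the advertised form $\textup{Tr}((-\Delta-\mu)Q_0)+1+\mu^{3/2+\epsilon}$.
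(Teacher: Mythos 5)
Your high-level decomposition and your diagnosis of the free-flow low--low term are essentially the paper's, but the framework you set up at the outset cannot deliver them: the bootstrap ``exactly as in Lemma \ref{Density estimates I}'' never starts for the piece $A^{--}=\Pi_{2\mu}^-Q_0\Pi_{2\mu}^-$. The closing mechanism in Lemma \ref{Density estimates I} is Lemma \ref{lem: useful lemma}, which reproduces a norm of the form $\||\nabla|^{1/2}\gamma(t)|\nabla|^{1/2}\|_{\mathcal{S}^{(1+2\epsilon)/4}(I)}$ on both sides; at $t=t_0$ this norm is $\||\nabla|^{1/2}Q_0|\nabla|^{1/2}\|_{\mathcal{H}^{(1+2\epsilon)/4}}$, and for the low--low piece that quantity is not controlled by the relative kinetic energy (indeed $\|A^{--}\|_{\FS^2}$ can be infinite, as you correctly observe). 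So the iteration is not a small perturbation of a finite quantity, and the argument does not reduce to the free density estimate as you claim. Your pointwise-in-$t$ Lieb--Thirring bound handles only the zeroth Duhamel term $\rho_{e^{it\Delta}A^{--}e^{-it\Delta}}$; it gives you no handle on the commutator terms $\int_{t_0}^t e^{i(t-t_1)\Delta}[V(t_1),\,S_V(t_1,t_0)_\star A^{--}]e^{-i(t-t_1)\Delta}\,dt_1$, and you have not proposed any substitute there.

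The paper's way around this is genuinely different and worth absorbing. Rather than iterating, it expands Duhamel \emph{once}, using $S_V(t,t_0)=e^{i(t-t_0)\Delta}-i\int_{t_0}^t S_V(t,t_1)V(t_1)e^{i(t_1-t_0)\Delta}\,dt_1$ to write $S_V(t,t_0)_\star A^{--}$ as the free evolution (your $A_\mu$ term, treated precisely as you suggest) plus terms carrying exactly one explicit factor of $V$. The key observation, which your proposal is missing, is that \emph{once $V$ appears, the low-frequency projector upgrades the operator to a weighted Hilbert--Schmidt class}: by Plancherel, $\|\langle\nabla\rangle^{(3+2\epsilon)/4}V\Pi_{2\mu}^-\|_{\FS^2}\lesssim(1+\mu)^{(9+2\epsilon)/8}\|V\|_{H^{1+\epsilon}_x}$, and since $\|Q_0\|_{\mathcal{L}(L^2)}\leq 1$, the whole composition $V\Pi_{2\mu}^-Q_0\Pi_{2\mu}^-$ lands back in the setting where Lemma \ref{Density estimates I} and Lemma \ref{boundedness of linear propagator} apply. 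The smallness factor $|I|^{(1+2\epsilon)/4}$ then comes from the $L_t^1\hookrightarrow L_t^2$ H\"older step on $V$, not from a contraction. You should replace the bootstrap claim for $A^{--}$ by this once-expanded Duhamel argument; everything else in your proposal (the treatment of $Q_0-A^{--}$ via Lemma \ref{Density estimates I}, the bound \eqref{high freq part initial data estimate} by complex interpolation, and the commutation-plus-Lieb--Thirring estimate for the free low--low density) matches the paper.
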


For the proof, we need the following lemma.
\begin{lemma}[Boundedness of the linear propagator in Sobolev norms]\label{boundedness of linear propagator}
Let $\alpha\in({\frac{1}{2}}, \frac{3}{2})$, $R\geq 1$ and $0<T\ll R^{-\frac{4}{2\alpha-1}}$.
Then, for any $V, V'\in \mathcal{B}_R^\alpha(I)$ and $t_0\in I$, we have
\begin{align}
\sup_{t\in I}\|S_V(t,t_0)\|_{\mathcal{L}(\dot{H}_x^\alpha)}&\leq 2,\label{eq: boundedness of wave operator1}\\
\sup_{t\in I}\|S_{V}(t,t_0)-S_{V'}(t,t_0)\|_{\mathcal{L}(\dot{H}_x^\alpha)}&\leq \tilde{c}|I|^{\frac{2\alpha-1}{4}}\|V-V'\|_{L_t^2(I;\dot{H}_x^\alpha)},\label{eq: boundedness of wave operator2}
\end{align}
where $\|\cdot\|_{\mathcal{L}(\dot{H}_x^\alpha)}$ is the operator norm on $\dot{H}_x^\alpha$.
\end{lemma}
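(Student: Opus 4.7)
The plan is to run a Strichartz-based contraction argument on the Duhamel integral formulation of $S_V(t,t_0)u_0$, with every estimate on the short interval $I$ closing by smallness-absorption thanks to the hypothesis $|I|\ll R^{-4/(2\al-1)}$. First I would write
$$u(t) = e^{i(t-t_0)\De}u_0 - i\int_{t_0}^t e^{i(t-s)\De}(Vu)(s)\,ds,$$
apply $|\na|^\al$, and use Strichartz on $I$ with the admissible pair $(\t q, \t r) = (\tfrac{4}{3-2\al}, \tfrac{3}{\al})$ (for which $\tfrac{2}{\t q}+\tfrac{3}{\t r} = \tfrac{3}{2}$) to handle the forcing term in its dual $L^{\t q'}_t L^{\t r'}_x$, obtaining
$$\||\na|^\al u\|_{L^\I_t(I; L^2_x)} \le C\|u_0\|_{\dH^\al_x} + C\||\na|^\al(Vu)\|_{L^{\t q'}_t(I; L^{\t r'}_x)}.$$

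The heart of the argument is the product estimate
$$\||\na|^\al(Vu)\|_{L^{\t q'}_t(I; L^{\t r'}_x)} \ls |I|^{(2\al-1)/4}\|V\|_{L^2_t(I; \dH^\al_x)}\|u\|_{L^\I_t(I; \dH^\al_x)}.$$
Spatially, the algebraic identity $\tfrac{1}{\t r'} = \tfrac{3-\al}{3} = \tfrac{1}{2}+\tfrac{3-2\al}{6}$ lets the Kato--Ponce fractional Leibniz rule yield
$$\||\na|^\al(Vu)\|_{L^{\t r'}_x} \ls \|V\|_{L^{6/(3-2\al)}_x}\||\na|^\al u\|_{L^2_x} + \||\na|^\al V\|_{L^2_x}\|u\|_{L^{6/(3-2\al)}_x},$$
and on each term the Sobolev embedding $\dH^\al_x \hookrightarrow L^{6/(3-2\al)}_x$ (valid because $\al < 3/2$) upgrades the undifferentiated factor to $\dH^\al_x$, giving a pointwise-in-time bound $\ls \|V\|_{\dH^\al_x}\|u\|_{\dH^\al_x}$. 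In time, $\tfrac{1}{\t q'} = \tfrac{1+2\al}{4} = \tfrac{1}{2}+\tfrac{2\al-1}{4}$, so one H\"older step produces $\|V\|_{L^2_t\dH^\al}\|u\|_{L^{4/(2\al-1)}_t\dH^\al}$, and trading $L^{4/(2\al-1)}_t(I)$ for $L^\I_t(I)$ on the finite interval delivers the advertised time loss $|I|^{(2\al-1)/4}$.

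Feeding the product estimate back into the Duhamel bound produces
$$\||\na|^\al u\|_{L^\I_t(I; L^2_x)} \le C\|u_0\|_{\dH^\al_x} + C'R|I|^{(2\al-1)/4}\||\na|^\al u\|_{L^\I_t(I; L^2_x)},$$
and the hypothesis on $|I|$ makes $C'R|I|^{(2\al-1)/4} \le \tfrac{1}{2}$, so absorption on the left gives \eqref{eq: boundedness of wave operator1}. For the continuity bound \eqref{eq: boundedness of wave operator2}, I would apply the same machinery to $w := (S_V-S_{V'})u_0$, which satisfies the perturbative Duhamel identity
$$w(t) = -i\int_{t_0}^t e^{i(t-s)\De}\big\{(V-V')u(s) + V'w(s)\big\}ds, \qquad u := S_V(\cdot,t_0)u_0;$$
the product estimate bounds the first forcing term by $\t c |I|^{(2\al-1)/4}\|V-V'\|_{L^2_t\dH^\al}\|u\|_{L^\I_t\dH^\al}$ and the second by $\t c|I|^{(2\al-1)/4}R\|w\|_{L^\I_t\dH^\al}$. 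Using \eqref{eq: boundedness of wave operator1} to dominate $\|u\|_{L^\I_t\dH^\al(I)}\le 2\|u_0\|_{\dH^\al}$ and absorbing the $V'w$ term yields \eqref{eq: boundedness of wave operator2}.

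The main subtlety will be verifying that the Kato--Ponce exponents align so that both factors in $Vu$ land in $\dH^\al_x$ \emph{without} any $L^\I_x$ control on $V$ or $u$; this hinges on the identity $\tfrac{1}{\t r'} = \tfrac{1}{2}+\tfrac{3-2\al}{6}$ associated with the specific admissible pair $(\t q,\t r)=(\tfrac{4}{3-2\al},\tfrac{3}{\al})$. Provided $\al \in (1,3/2)$, the intermediate exponents $\tfrac{6}{3-2\al}\in(6,\I)$, $\tfrac{3}{\al}\in(2,3)$ and $\tfrac{4}{2\al-1}\in(4,\I)$ all sit in admissible ranges for Kato--Ponce, Sobolev, and Strichartz respectively, and the sharp time loss $|I|^{(2\al-1)/4}$ drops out of the H\"older identity $1/c=(2\al-1)/4$ on the finite interval.
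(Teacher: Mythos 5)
Your proof is correct and follows essentially the same route as the paper's: Duhamel formula, Strichartz with the dual admissible pair $L_t^{4/(1+2\alpha)}L_x^{3/(3-\alpha)}$, fractional Leibniz plus Sobolev embedding $\dot H^\alpha_x\hookrightarrow L^{6/(3-2\alpha)}_x$ to control $|\nabla|^\alpha(Vu)$, H\"older in time to extract $|I|^{(2\alpha-1)/4}$, and absorption; the difference estimate is treated identically. One small point: to obtain the \emph{explicit} bound $\le 2$ in \eqref{eq: boundedness of wave operator1} you should note (as the paper implicitly does) that the homogeneous piece $e^{i(t-t_0)\Delta}u_0$ satisfies $\|e^{i(t-t_0)\Delta}u_0\|_{\dot H^\alpha_x}=\|u_0\|_{\dot H^\alpha_x}$ by unitarity, i.e.\ with constant exactly $1$, so that after absorbing the Duhamel term you get $\le 2\|u_0\|_{\dot H^\alpha}$ rather than $\le 2C\|u_0\|_{\dot H^\alpha}$; also $\tfrac{4}{2\alpha-1}\in(2,4)$ for $\alpha\in(1,\tfrac32)$, not $(4,\infty)$, though this does not affect the argument.
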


\begin{proof}
Let $u(t)=S_V(t,t_0)u_0$ and $u'(t)=S_{V'}(t,t_0)u_0$. Then, applying Strichartz estimates \eqref{eq: one-particle Strichartz} to 
$$u(t)=e^{i(t-t_0)\Delta}u_0-i\int_{t_0}^t e^{i(t-t_1)\Delta}(Vu)(t_1)dt_1,$$
we obtain that 
$$\|u\|_{C_t(I;\dot{H}_x^\alpha)}\leq \|u_0\|_{\dot{H}_x^\alpha}+c\big\||\nabla|^\alpha (Vu)\big\|_{L_t^{\frac{4}{1+2\alpha}}(I; L_x^{\frac{3}{3-\alpha}})}.$$
Note that by the fractional Leibniz rule,
$$\big\||\nabla|^\alpha (Vu)\big\|_{L_t^{\frac{4}{1+2\alpha}}(I; L_x^{\frac{3}{3-\alpha}})}\lesssim |I|^{\frac{2\alpha-1}{4}} \|V\|_{L_t^2(I; \dot{H}_x^\alpha)}\|u\|_{C_t(I; \dot{H}_x^\alpha)}.$$
Hence, it follows that 
$$\|u\|_{C_t(I; \dot{H}_x^\alpha)}\leq \|u_0\|_{\dot{H}_x^\alpha}+\tilde{c}|I|^{\frac{2\alpha-1}{4}}R\|u\|_{C_t(I;\dot{H}_x^\alpha)}.$$
Then, taking an interval $I$ such that $|I|\leq (2\tilde{c}R)^{-\frac{4}{2\alpha-1}}$, we prove \eqref{eq: boundedness of wave operator1}.

For \eqref{eq: boundedness of wave operator2}, we consider the difference
$$(u-u')(t)=-i\int_{t_0}^t e^{i(t-t_1)\Delta}|\nabla|^s(Vu-V'u')(t_1) dt_1.$$
Indeed, by Strichartz estimates and the fractional Leibniz rule as before, we have
$$\begin{aligned}
\|u-u'\|_{C_t(I;\dot{H}_x^\alpha)}&\leq c|I|^{\frac{2\alpha-1}{4}}\|V-V'\|_{L_t^2(I; \dot{H}_x^\alpha)}\|u\|_{C_t(I; \dot{H}_x^\alpha)}\\
&\quad+ \tilde{c}|I|^{\frac{2\alpha-1}{4}}\|V'\|_{L_t^2(I;\dot{H}_x^\alpha)}\|u-u'\|_{C_t(I;\dot{H}_x^\alpha)}.
\end{aligned}$$
Then, by the choice of $I$, \eqref{eq: boundedness of wave operator2} follows.
\end{proof}

\begin{proof}[Proof of Lemma \ref{Density estimates II}]
We decompose the initial perturbation into the high and the low frequency parts as 
$$Q_0=(Q_{0}-\Pi_{2\mu}^-Q_{0}\Pi_{2\mu}^-)+\Pi_{2\mu}^-Q_{0}\Pi_{2\mu}^-.$$
For the high frequency contribution, we apply Lemma \ref{Density estimates I} to obtain
$$\|\rho_{S_{V}(t,t_0)_\star (Q_{0}-\Pi_{2\mu}^-Q_{0}\Pi_{2\mu}^-)}\|_{\mathcal{D}^{1+\epsilon}(I)}\lesssim \big\||\nabla|^{\frac{1}{2}}(Q_{0}-\Pi_{2\mu}^-Q_{0}\Pi_{2\mu}^-)|\nabla|^{\frac{1}{2}}\big\|_{\mathcal{H}^{\frac{1+2\epsilon}{4}}}$$
and
$$\begin{aligned}
&\|\rho_{S_{V}(t,t_0)_\star (Q_{0}-\Pi_{2\mu}^-Q_{0}\Pi_{2\mu}^-)}-\rho_{S_{V'}(t,t_0)_\star (Q_{0}-\Pi_{2\mu}^-Q_{0}\Pi_{2\mu}^-)}\|_{\mathcal{D}^{1+\epsilon}(I)}\\
&\lesssim |I|^{\frac{1+2\epsilon}{4}}{\|V-V'\|_{\mathcal{D}^{1+\epsilon}(I)}}\big\||\nabla|^{\frac{1}{2}}(Q_{0}-\Pi_{2\mu}^-Q_{0}\Pi_{2\mu}^-)|\nabla|^{\frac{1}{2}}\big\|_{\mathcal{H}^{\frac{1+2\epsilon}{4}}}.
\end{aligned}$$
Hence, it suffices to show that
\begin{equation}\label{high freq part initial data estimate}
\begin{aligned}
&\big\||\nabla|^{\frac{1}{2}}\Pi_{2\mu}^+Q_{0}\Pi_{2\mu}^+|\nabla|^{\frac{1}{2}}\big\|_{\mathcal{H}^{\frac{1+2\epsilon}{4}}}+\big\||\nabla|^{\frac{1}{2}}\Pi_{2\mu}^+Q_{0}\Pi_{2\mu}^-|\nabla|^{\frac{1}{2}}\big\|_{\mathcal{H}^{\frac{1+2\epsilon}{4}}}\\
&\quad \lesssim \textup{Tr}(-\Delta-\mu)Q_0+1+\mu^{\frac{3}{2}+\epsilon}.
\end{aligned}
\end{equation}
For the high-high frequency component, we have
$$\big\||\nabla|\Pi_{2\mu}^+Q_{0}\Pi_{2\mu}^+|\nabla|\big\|_{\mathfrak{S}^2}\lesssim \big\||\Delta+\mu|^{\frac{1}{2}}\Pi_\mu^+ Q_{0}\Pi_\mu^+|\Delta+\mu|^{\frac{1}{2}}\big\|_{\mathfrak{S}^1}\leq \textup{Tr}(-\Delta-\mu)Q_0$$
while 
$$\begin{aligned}
\big\||\nabla|^{\frac{1}{2}}\Pi_{2\mu}^+Q_{0}\Pi_{2\mu}^+|\nabla|^{\frac{1}{2}}\big\|_{\mathfrak{S}^2}&=\big\{\textup{Tr}\big(Q_{0}|\nabla|\Pi_{2\mu}^+Q_{0}\Pi_{2\mu}^+|\nabla|\big)\big\}^{\frac{1}{2}}\lesssim \|Q_{0}\|^{\frac{1}{2}}\big\||\nabla|\Pi_{2\mu}^+Q_{0}\Pi_{2\mu}^+|\nabla|\big\|_{\mathfrak{S}^1}^{\frac{1}{2}}\\
&\lesssim \big\{\textup{Tr}(-\Delta-\mu)Q_0\big\}^{\frac{1}{2}}\lesssim\textup{Tr}(-\Delta-\mu)Q_0+1.
\end{aligned}$$
Hence, complex interpolation yields $\||\nabla|^{\frac{1}{2}}\Pi_{2\mu}^+Q_{0}\Pi_{2\mu}^+|\nabla|^{\frac{1}{2}}\|_{\mathcal{H}^{\frac{1+2\epsilon}{4}}}\lesssim \textup{Tr}(-\Delta-\mu)Q_0+1$. On the other hand, for the high-low component, by \eqref{eq: relative kinetic energy controls schatten 2}, we have
$$\big\||\nabla|^{\frac{1}{2}}\Pi_{2\mu}^+Q_{0}\Pi_{2\mu}^-|\nabla|^{\frac{1}{2}}\big\|_{\mathcal{H}^{\frac{1+2\epsilon}{4}}}
\lesssim \mu^{\frac{1}{4}}\langle\mu\rangle^{\frac{1+2\epsilon}{8}}\big\||\Delta+\mu|^{\frac{1}{2}} Q_{0}\big\|_{\mathfrak{S}^2}\lesssim \textup{Tr}(-\Delta-\mu)Q_0+1+\mu^{\frac{3+2\epsilon}{4}}.$$
Therefore, \eqref{high freq part initial data estimate} follows.

For the low frequency contribution, using the formula 
$$S_V(t,t_0)=e^{i(t-t_0)\Delta}-i\int_{t_0}^t S_V(t,t_1)V(t_1)e^{i(t_1-t_0)\Delta}dt_1,$$
we write $S_{V}(t,t_0)_\star \Pi_{2\mu}^-Q_{0}\Pi_{2\mu}^-$ as 
\begin{equation}\label{eq: difference for low frequencies}
\begin{aligned}
&e^{i(t-t_0)\Delta} \Pi_{2\mu}^-Q_{0}\Pi_{2\mu}^-\bigg(e^{-i(t-t_0)\Delta}+i\int_{t_0}^te^{-i(t_1-t_0)\Delta}{V}(t_1)S_{V}(t,t_1)^*dt_1\bigg)\\
&+\bigg(-i\int_{t_0}^tS_{V}(t,t_1){V}(t_1)e^{i(t_1-t_0)\Delta}dt_1\bigg)\Pi_{2\mu}^-Q_{0}\Pi_{2\mu}^-S_{V}(t,t_0)^*.
\end{aligned}
\end{equation}
Thus, we have
$$\begin{aligned}
\|\rho_{S_{V}(t,t_0)_\star \Pi_{2\mu}^-Q_{0}\Pi_{2\mu}^-}\|_{\mathcal{D}^{1+\epsilon}(I)}&\leq \|\rho_{e^{i(t-t_0)\Delta}\Pi_{2\mu}^-Q_{0}\Pi_{2\mu}^-e^{-i(t-t_0)\Delta}}\|_{\mathcal{D}^{1+\epsilon}(I)}\\
&\quad+\int_I \|\rho_{e^{i(t-t_0)\Delta}\Pi_{2\mu}^-Q_{0}\Pi_{2\mu}^-e^{-i(t_1-t_0)\Delta}{V}(t_1)S_{V}(t,t_1)^*}\|_{\mathcal{D}^{1+\epsilon}(I)}dt_1\\
&\quad+\int_I \|\rho_{S_{V}(t,t_1){V}(t_1)e^{i(t_1-t_0)\Delta}\Pi_{2\mu}^-Q_{0}\Pi_{2\mu}^-S_{V}(t,t_0)^*}\|_{\mathcal{D}^{1+\epsilon}(I)}dt_1\\
&=:A_\mu+B_{\mu}+C_\mu.
\end{aligned}$$
For $A_\mu$, we note that the Fourier transform of $\rho_{e^{i(t-t_0)\Delta}\Pi_{2\mu}^-Q_{0}\Pi_{2\mu}^-e^{-i(t-t_0)\Delta}}$ is supported in frequency $|\xi|\leq 2\sqrt{2\mu}$ and that
$$0\leq\Pi_\mu^-+e^{i(t-t_0)\Delta}\Pi_{2\mu}^-Q_{0}\Pi_{2\mu}^-e^{-i(t-t_0)\Delta}=e^{i(t-t_0)\Delta}\Pi_{2\mu}^-\gamma_0\Pi_{2\mu}^-e^{-i(t-t_0)\Delta}\leq1.$$
Hence, it follows from the inequality \eqref{eq: low-low LT ineq} with {$0\leq |I|\leq 1$} that
$$\begin{aligned}
A_\mu&\lesssim \langle\mu\rangle^{\frac{1+\epsilon}{2}}\|\rho_{e^{i(t-t_0)\Delta} \Pi_{2\mu}^-Q_{0}\Pi_{2\mu}^-e^{-i(t-t_0)\Delta}}\|_{L_{t}^\infty(I; L_x^2)}\\
&\lesssim \langle\mu\rangle^{\frac{1+\epsilon}{2}}\mu^{\frac{1}{4}}\big\{\textup{Tr}(-\Delta-\mu)Q_0\big\}^{\frac{1}{2}}\lesssim \textup{Tr}(-\Delta-\mu)Q_0+1+\mu^{\frac{3}{2}+\epsilon}.
\end{aligned}$$
For $B_\mu$ and $C_\mu$, it follows from Lemma \ref{Density estimates I}, it follows  that 
\begin{equation}\label{eq: B+C first bound}
\begin{aligned}
B_\mu+C_\mu&\lesssim\|e^{i(t_1-t_0)\Delta}\Pi_{2\mu}^-Q_{0}\Pi_{2\mu}^-e^{-i(t_1-t_0)\Delta}{V}(t_1)S_{V}(t,t_1)^*\|_{L_{t_1}^1(I;\mathcal{H}^{\frac{3+2\epsilon}{4}})}\\
&\quad+\|S_{V}(t,t_1){V}(t_1)e^{i(t_1-t_0)\Delta}\Pi_{2\mu}^-Q_{0}\Pi_{2\mu}^-S_{V}(t_1,t_0)^*\|_{L_{t_1}^1(I;\mathcal{H}^{\frac{3+2\epsilon}{4}})},
\end{aligned}
\end{equation}
where on the right hand side, the operator norm $\|\cdot\|_{\mathcal{H}^{\frac{3+2\epsilon}{4}}}$ is used instead of $\||\nabla|^{\frac{1}{2}}\cdot|\nabla|^{\frac{1}{2}}\|_{\mathcal{H}^{\frac{1+2\epsilon}{4}}}$ for notational convenience, because there is no loss in the end. We observe from Lemma \ref{boundedness of linear propagator} and the unitarity $\|S_{V}(t,t_0)\|_{\mathcal{L}(L_x^2)}=1$ that $\|\langle\nabla\rangle^{\frac{3+2\epsilon}{4}}S_{V}(t,t_0)\langle\nabla\rangle^{-\frac{3+2\epsilon}{4}}\|_{\mathcal{L}(L_x^2)}\lesssim1$. Therefore, we obtain that 
$$\begin{aligned}
B_\mu+C_\mu&\lesssim\big\|\langle\nabla\rangle^{\frac{3+2\epsilon}{4}}{V}(t)e^{i(t-t_0)\Delta}\Pi_{2\mu}^-Q_0\Pi_{2\mu}^-\langle\nabla\rangle^{\frac{3+2\epsilon}{4}}\big\|_{L_{t}^1(I;\mathfrak{S}^2)}\\
&\lesssim\|\langle\nabla\rangle^{\frac{3+2\epsilon}{4}}{V}(t)\Pi_{2\mu}^-\|_{L_{t}^1(I;\mathfrak{S}^2)}\|Q_0\|_{\mathcal{L}(L^2)}\|\Pi_{2\mu}^-\langle\nabla\rangle^{\frac{3+2\epsilon}{4}}\|_{\mathcal{L}(L^2)}\\
&\lesssim(1+\mu)^{\frac{3+2\epsilon}{8}}\|\langle\nabla\rangle^{\frac{3+2\epsilon}{4}}{V}(t)\Pi_{2\mu}^-\|_{L_{t}^1(I;\mathfrak{S}^2)}.
\end{aligned}$$
Note that by direct calculation, the Fourier transform of the kernel of $V\Pi_{2\mu}^-$ is given by $\hat{V}(\xi+\xi')\mathds{1}_{(|\xi'|^2\leq 2\mu)}$. Hence, by the Plancherel theorem, it follows that 
$$\big\|\langle\nabla_x\rangle^{\frac{3+2\epsilon}{4}}(V\Pi_\mu^-)(x,x')\big\|_{L_{x,x'}^2}\sim \big\|\langle\xi\rangle^{\frac{3+2\epsilon}{4}}\hat{V}(\xi+\xi')\mathds{1}_{(|\xi'|^2\leq \mu)}\big\|_{L_{\xi, \xi'}^2}\lesssim (1+ \mu)^{\frac{9+2\epsilon}{8}}\|V\|_{H_x^{1+\epsilon}},$$
and so
\begin{equation}\label{eq: B+C final bound}
B_\mu+C_\mu\lesssim (1+\mu)^{\frac{3+\epsilon}{2}}|I|^{\frac{1}{2}}{\|V\|_{\mathcal{D}^{1+\epsilon}(I)}}\leq (1+\mu)^{\frac{3+\epsilon}{2}}|I|^{\frac{1}{2}}R\lesssim (1+\mu)^{\frac{3+\epsilon}{2}}|I|^{\frac{1-2\epsilon}{4}},
\end{equation}
{where in the first inequality, we used that $\|V\|_{L_t^2(I;H_x^{1+\epsilon})}\lesssim \|V\|_{\mathcal{D}^{1+\epsilon}(I)}$ when $|I|\leq1$.} Collecting all, we prove the first inequality \eqref{eq: Density estimates II, one-sided}.

For the difference, by \eqref{eq: difference for low frequencies}, we write 
\begin{equation}\label{eq: the difference decomposition}
\begin{aligned}
&S_{V}(t,t_0)_\star \Pi_{2\mu}^-Q_{0}\Pi_{2\mu}^--S_{V'}(t,t_0)_\star \Pi_{2\mu}^-Q_{0}\Pi_{2\mu}^-\\
&=i\int_{t_0}^te^{i(t-t_0)\Delta}\Pi_{2\mu}^-Q_{0}\Pi_{2\mu}^-e^{-i(t_1-t_0)\Delta}\big({V}(t_1)S_{V}(t,t_1)^*-{V'}(t_1)S_{V'}(t,t_1)^*\big)dt_1\\
&\quad-i\int_{t_0}^t\big(S_{V}(t,t_1){V}(t_1)-S_{V'}(t,t_1){V'}(t_1)\big)e^{i(t_1-t_0)\Delta}\Pi_{2\mu}^-Q_{0}\Pi_{2\mu}^-S_{V}(t,t_0)^*dt_1\\
&\quad-i\int_{t_0}^tS_{V'}(t,t_1){V'}(t_1)e^{i(t_1-t_0)\Delta}\Pi_{2\mu}^-Q_{0}\Pi_{2\mu}^-\big(S_{V}(t,t_0)^*-S_{V'}(t,t_0)^*\big)dt_1,
\end{aligned}
\end{equation}
and expand further using 
$$S_{V}(t,t_1)V(t_1)-S_{V'}(t,t_1)V'(t_1)=\big(S_{V}(t,t_1)-S_{V'}(t,t_1)\big)V(t_1)+S_{V'}(t,t_1)(V-V')(t_1)$$
in the integral. Then, we repeat the above estimates for $B_\mu$ and $C_\mu$. For instance, for the first integral in the decomposition, we apply Lemma \ref{Density estimates I} to obtain 
$$\begin{aligned}
&\bigg\|\rho\bigg[\int_{t_0}^te^{i(t-t_0)\Delta}\Pi_{2\mu}^-Q_{0}\Pi_{2\mu}^-e^{-i(t_1-t_0)\Delta}\big({V}(t_1)S_{V}(t,t_1)^*-{V'}(t_1)S_{V'}(t,t_1)^*\big)dt_1\bigg]\bigg\|_{\mathcal{D}^{1+\epsilon}(I)}\\
&\lesssim \big\|e^{i(t_1-t_0)\Delta}\Pi_{2\mu}^-Q_{0}\Pi_{2\mu}^-e^{-i(t_1-t_0)\Delta}V(t_1)\big(S_{V}(t,t_1)-S_{V'}(t,t_1)\big)^*\big\|_{L_{t_1}^1(I;\mathcal{H}^{\frac{3+2\epsilon}{4}})}\\
&\quad+ \big\|e^{i(t_1-t_0)\Delta}\Pi_{2\mu}^-Q_{0}\Pi_{2\mu}^-e^{-i(t_1-t_0)\Delta}(V-V')(t_1)S_{V'}(t,t_1)^*\big\|_{L_{t_1}^1(I;\mathcal{H}^{\frac{3+2\epsilon}{4}})}\\
&=:D_\mu+E_\mu.
\end{aligned}$$
Indeed, $E_\mu$ can be estimated exactly same as before, since it has the same structure as \eqref{eq: B+C first bound} (see also \eqref{eq: B+C final bound} to make sure that the bound include the $|I|^{\frac{1-2\epsilon}{4}}$ factor). For $D_\mu$, we estimate similarly but employ \eqref{eq: boundedness of wave operator2} to $\big(S_{V}(t,t_1)-S_{V'}(t,t_1)\big)^*$. Then, the desired bound 
$$D_\mu+E_\mu\lesssim |I|^{\frac{1+2\epsilon}{4}}\big\{\textup{Tr}(-\Delta-\mu)Q_0+1+\mu^{\frac{3}{2}+\epsilon}\big\}{\|V-V'\|_{\mathcal{D}^{1+\epsilon}(I)}}$$
is obtained. The other terms in \eqref{eq: the difference decomposition} can be estimated similarly. 
\end{proof}

\subsection{Finite relative entropy initial data}

Finally, we establish the density estimates when the initial state has finite relative entropy.

\begin{proposition}[Density function estimates for linear flows III]\label{Density estimates III}
Let $R\geq 1$. Suppose that $t_0\in I$ and $0<|I|\ll R^{-\frac{4}{1+2\epsilon}}$ for sufficiently small $\epsilon>0$. Then, for any $\gamma_0=\gamma_f+Q_0\in\mathcal{K}_f$, $V, V'\in \mathcal{B}_R^{1+\epsilon}(I)$, we have
\begin{equation}\label{eq: Density estimates III, one-sided}
\|\rho_{S_{V}(t,t_0)_\star Q_{0}}\|_{\mathcal{D}^{1+\epsilon}(I)}\leq c_2\big\{\mathcal{H}(\gamma_0|\gamma_f)+1+\|\mu^{\frac{3}{2}+\epsilon}\|_{L_\lambda^1(0,1)}\big\}
\end{equation}
and
\begin{equation}\label{eq: Density estimates III, continuity}
\begin{aligned}
&\|\rho_{S_{V}(t,t_0)_\star Q_{0}}-\rho_{S_{V'}(t,t_0)_\star Q_{0}}\|_{\mathcal{D}^{1+\epsilon}(I)}\\
&\quad \leq c_2|I|^{\frac{1+2\epsilon}{4}}\big\{\mathcal{H}(\gamma_0|\gamma_f)+1+\|\mu^{\frac{3}{2}+\epsilon}\|_{L_\lambda^1(0,1)}\big\}{\|V-V'\|_{\mathcal{D}^{1+\epsilon}(I)}}.
\end{aligned}
\end{equation}
\end{proposition}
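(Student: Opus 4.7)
The plan is to reduce the proposition to Lemma \ref{Density estimates II} via the layer-cake decomposition \eqref{eq:perturbation representation} of $Q_0$ into perturbations from Fermi seas at zero temperature, and then integrate in the slicing parameter $\lambda$. By functional calculus applied jointly to $\gamma_0$ and $-\Delta$, we have
$$Q_0 = \gamma_0 - \gamma_f = \int_0^1 Q_{0,\mu}\,d\lambda, \qquad Q_{0,\mu} := \mathbbm{1}(\gamma_0\geq\lambda)-\Pi_\mu^-, \qquad \mu = S'(\lambda).$$
Since the conjugation map $S_V(t,t_0)_\star$ and the density map are linear, this yields
$$\rho_{S_V(t,t_0)_\star Q_0}(t,x) = \int_0^1 \rho_{S_V(t,t_0)_\star Q_{0,\mu}}(t,x)\,d\lambda,$$
and Minkowski's inequality in $\lambda$ gives
$$\|\rho_{S_V(t,t_0)_\star Q_0}\|_{\mathcal{D}^{1+\epsilon}(I)} \leq \int_0^1 \|\rho_{S_V(t,t_0)_\star Q_{0,\mu}}\|_{\mathcal{D}^{1+\epsilon}(I)}\,d\lambda.$$

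Next, I would apply \eqref{eq: Density estimates II, one-sided} of Lemma \ref{Density estimates II} slicewise: for a.e.\ $\lambda\in(0,1)$, the perturbation satisfies $\Pi_\mu^-+Q_{0,\mu}\in\mathcal{K}_\mu$, with the finiteness of $\textup{Tr}(-\Delta-\mu)Q_{0,\mu}$ ensured by Lemma \ref{lem: integral representation of the relative entropy} and the assumption $\mathcal{H}(\gamma_0|\gamma_f)<\infty$; thus
$$\|\rho_{S_V(t,t_0)_\star Q_{0,\mu}}\|_{\mathcal{D}^{1+\epsilon}(I)} \leq c_2\big\{\textup{Tr}(-\Delta-\mu)Q_{0,\mu} + 1 + \mu^{\frac{3}{2}+\epsilon}\big\}.$$
Integrating in $\lambda\in(0,1)$ and invoking Lemma \ref{lem: integral representation of the relative entropy} to control $\int_0^1\textup{Tr}(-\Delta-\mu)Q_{0,\mu}\,d\lambda\leq \mathcal{H}(\gamma_0|\gamma_f)$ yields \eqref{eq: Density estimates III, one-sided}. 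The continuity estimate \eqref{eq: Density estimates III, continuity} follows from the identical decomposition of $S_V(t,t_0)_\star Q_0 - S_{V'}(t,t_0)_\star Q_0$, applying \eqref{eq: Density estimates II, continuity} slicewise and factoring the common $\|V-V'\|_{L_t^2(I;H_x^{1+\epsilon})}$ out of the $\lambda$-integral.

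The main technical obstacle is the rigorous passage from the slicewise bound to its integrated form: Lemma \ref{Density estimates II} is stated on the completion $\mathcal{K}_\mu$ of smooth finite-rank perturbations, whereas for generic $\lambda$ the slice $Q_{0,\mu}$ is only the difference of two orthogonal projections and the Bochner integral $\int_0^1 Q_{0,\mu}\,d\lambda$ must be interpreted in a suitable operator topology. To handle this, I would first prove the proposition for the finite-rank smooth approximants $Q_0^{(k)}$ provided by Lemma \ref{lem: partial finite-dimensional approximation}, for which the layer-cake integral reduces to a finite sum and $\mathcal{K}_\mu$-membership of each slice is automatic, and then pass to the limit $k\to\infty$ using that the right-hand side is uniformly bounded by $c_2\{\mathcal{H}(\gamma_f+Q_0^{(k)}|\gamma_f)+1+\|\mu^{\frac{3}{2}+\epsilon}\|_{L^1_\lambda(0,1)}\}$, which converges to the desired quantity, together with weak-$*$ convergence of $\rho_{S_V(t,t_0)_\star Q_0^{(k)}}$ in $\mathcal{D}^{1+\epsilon}(I)$ and lower semicontinuity of the norm under this convergence.
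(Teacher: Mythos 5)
Your proposal is correct and follows essentially the same route as the paper: the layer-cake representation $S_V(t,t_0)_\star Q_0=\int_0^1 S_V(t,t_0)_\star Q_0^{(\mu)}\,d\lambda$ with $Q_0^{(\mu)}=\mathbbm{1}(\gamma_0\geq\lambda)-\Pi_\mu^-$, Minkowski's inequality in $\lambda$, the slicewise application of Lemma \ref{Density estimates II}, and Lemma \ref{lem: integral representation of the relative entropy} to bound $\int_0^1\textup{Tr}(-\Delta-\mu)Q_0^{(\mu)}\,d\lambda$ by the relative entropy. The extra approximation step you sketch (finite-rank approximants plus lower semicontinuity) is a reasonable precaution but is not part of, nor needed beyond, the paper's direct argument.
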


\begin{proof}
Applying Lemma \ref{Density estimates II} to the integral representation of an initial perturbation
\begin{equation}\label{density of the linear evolution}
S_V(t,t_0)_\star Q_0=\int_0^1 S_V(t,t_0)_\star Q_0^{(\mu)} d\lambda,
\end{equation}
where $\mu=S'(\lambda)$, $Q_0^{(\mu)}=\mathds{1}_{(\gamma_0\geq\lambda)}-\Pi_\mu^-$ and $\Pi_\mu^-=\mathds{1}_{(\gamma_f\geq\lambda)}$, we obtain
$$\begin{aligned}
\|\rho_{S_{V}(t,t_0)_\star Q_0}\|_{\mathcal{D}^{1+\epsilon}(I)}&\leq\int_0^1 \|\rho_{S_{V}(t,t_0)_\star Q_0^{(\mu)}}\|_{\mathcal{D}^{1+\epsilon}(I)} d\lambda\\
&\leq c_2\int_0^1 \big\{\textup{Tr}(-\Delta-\mu)Q_0^{(\mu)}+1+\mu^{\frac{3}{2}+\epsilon}\big\} d\lambda
\end{aligned}$$
and
$$\begin{aligned}
&\|\rho_{S_{V}(t,t_0)_\star Q_0}-\rho_{S_{V'}(t,t_0)_\star Q_0}\|_{\mathcal{D}^{1+\epsilon}(I)}\\
&\leq\int_0^1 \|\rho_{S_{V}(t,t_0)_\star Q_0^{(\mu)}}-\rho_{S_{V'}(t,t_0)_\star Q_0^{(\mu)}}\|_{\mathcal{D}^{1+\epsilon}(I)}d\lambda\\
&\leq c_2 |I|^{\frac{1+2\epsilon}{4}}{\|V-V'\|_{\mathcal{D}^{1+\epsilon}(I)}}\int_0^1 \big\{\textup{Tr}(-\Delta-\mu)Q_0^{(\mu)}+1+\mu^{\frac{3}{2}+\epsilon}\big\}d\lambda.
\end{aligned}$$
Therefore, the proposition follows from Lemma \ref{lem: integral representation of the relative entropy}.
\end{proof}

\section{Local well-posedness of the NLH with finite relative entropy initial data}\label{sec: local well-posedness of the NLH}

In this section, we establish the local well-posedness of the nonlinear Hartree equation for perturbations around a reference state of infinite particles. For mathematical formulation, we employ the linear propagator $S_{w*\rho_Q}(t,t_1)$ to write the perturbation equation \eqref{NLH} with initial data $Q_0$ as  
\begin{equation}\label{eq: NLH original}
Q(t)=S_{w*\rho_Q}(t,0)_\star Q_0-i\int_0^t S_{w*\rho_Q}(t,t_1)_\star[w*\rho_Q(t_1),\gamma_f]dt_1.
\end{equation}
Then, taking density, we derive an equation
\begin{equation}\label{eq: NLH density}
g(t)=\rho_{S_{w*g}(t,0)_\star Q_0}-i\rho\bigg(\int_0^t S_{w*g}(t,t_1)_\star[w*g(t_1),\gamma_f]dt_1\bigg)
\end{equation}
with only one type of unknown as a density function. Using this equation, we establish the following local well-posedness.

\begin{proposition}[Local well-posedness of the Hartree equation]\label{LWP for density NLH}
Let $\epsilon>0$ be sufficiently small. Suppose that $S:[0,1] \to \BR^+$ is $\CH$-admissible, and set $f := (S')^{-1}$.
Moreover, we assume that
\begin{equation}\label{LWP assumption}
	\|\lxr^{\frac{3}{2}+\epsilon}f(|\xi|^2)\|_{L^2_\xi} < \I.
\end{equation}
Let $w$ be a finite measure on $\BR^3$.
Take large 
\begin{equation}\label{eq: R}
R\geq 2c_2(1+\|w\|_{\mathcal{M}})\Big(\mathcal{H}(\gamma_0|\gamma_f)+1+\|\mu^{\frac{3}{2}+\epsilon}\|_{L^1(0,1)}\Big)+1,
\end{equation}
where $\mu = S'(\lm)$ and $c_2$ is a constant given in Proposition \ref{Density estimates III}.
Then, for any $\ga_0 = Q_0 + \ga_f \in \CK_f$, we have the following.
\begin{enumerate}[$(i)$]
\item (Local well-posedness) There exist $0<T\ll R^{-\frac{4}{1+2\epsilon}}$, depending only on $R$ and $\|\langle\xi\rangle^{\frac{3+\epsilon}{2}}f(|\xi|^2)\|_{L^2}$ (see \eqref{eq: choice of T} below), and a unique solution $g\in{\mathcal{D}^{1+\epsilon}(I)}$ to the density function equation \eqref{eq: NLH density} such that $\|g\|_{{\mathcal{D}^{1+\epsilon}(I)}}\leq R$, where $I=[-T,T]$.
\item (Construction of solution to NLH) Let $g$ be the solution to \eqref{eq: NLH density} in $(i)$. Then, 
$$Q(t):=S_{w*g}(t,0)_\star Q_0-i\int_0^t S_{w*g}(t,t_1)_\star[w*g(t_1),\gamma_f]dt_1\in C_t(I;\mathcal{H}^{\frac{1}{2}})$$
is a unique solution to the perturbation equation \eqref{eq: NLH original} such that $\|\rho_Q\|_{\mathcal{D}^{1+\epsilon}(I)}\leq R$.
\end{enumerate}
\end{proposition}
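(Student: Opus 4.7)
The plan is to run a contraction mapping argument on the closed ball
$$B_R := \Bigl\{g \in L_t^2\bigl(I; H_x^{1+\ep}\bigr) : \|g\|_{L_t^2 H_x^{1+\ep}} \le R\Bigr\}, \qquad I := [-T,T],$$
for the solution map
$$\Phi(g)(t) := \rho_{S_{w*g}(t,0)_\st Q_0} - i\int_0^t \rho_{S_{w*g}(t,t_1)_\st [w*g(t_1),\ga_f]}\,dt_1.$$
The basic observation is that for $g\in B_R$ the potential $V := w*g$ satisfies $\|V\|_{L_t^2 H_x^{1+\ep}} \le \|w\|_{\CM} R$, so that $V$ lies in (the $L^2_t H^{1+\ep}_x$ portion of) $\CB_{(1+\|w\|_{\CM})R}^{1+\ep}(I)$, and both Proposition \ref{Density estimates III} and Lemma \ref{Density estimates I} become available as soon as $T \ll ((1+\|w\|_{\CM})R)^{-4/(1+2\ep)}$.

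The linear piece is controlled directly by \eqref{eq: Density estimates III, one-sided}, which yields the bound $c_2(\CH(\ga_0|\ga_f) + 1 + \|\mu^{3/2+\ep}\|_{L^1(0,1)}) \le R/(2(1+\|w\|_{\CM}))$ by the hypothesis on $R$. For the Duhamel piece, apply \eqref{eq: Density estimates I, two-sided} with operator initial datum $[w*g(t_1),\ga_f]$ at each time $t_1$ and integrate in $t_1$ by Minkowski; everything then reduces to the commutator estimate
\begin{equation}\label{eq: commutator estimate plan}
\Bigl\||\na|^{1/2} [V,\ga_f]|\na|^{1/2}\Bigr\|_{\CH^{(1+2\ep)/4}} \lesssim \|V\|_{H_x^{1+\ep}}\,\bigl\|\lxr^{3/2+\ep} f(|\xi|^2)\bigr\|_{L^2_\xi},
\end{equation}
which is the one new analytic input. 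To prove \eqref{eq: commutator estimate plan}, write the double-Fourier kernel of $[V,\ga_f]$ as $\hat V(\xi_1-\xi_2)\bigl(f(|\xi_2|^2) - f(|\xi_1|^2)\bigr)$, bound $|f(|\xi_2|^2) - f(|\xi_1|^2)| \le |f(|\xi_1|^2)| + |f(|\xi_2|^2)|$, and absorb each weight $|\xi_i|^{1/2}\lg\xi_i\rg^{(1+2\ep)/4}$ into $\lg\xi_i\rg^{(3+2\ep)/4}$; the Peetre inequality $\lg\xi_2\rg \le \lg\xi_1\rg\lg\xi_1-\xi_2\rg$ then splits each resulting piece into a product of the weighted $L^2_\xi$-norm $\|\lxr^{3/2+\ep}f(|\xi|^2)\|_{L^2}$ (finite by \eqref{LWP assumption}) and a Sobolev norm $\|V\|_{H^{(3+2\ep)/4}} \le \|V\|_{H^{1+\ep}}$. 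Cauchy--Schwarz in $t_1$ then produces a gain of $T^{1/2}$, so the Duhamel contribution is $\le CT^{1/2}\|w\|_{\CM} R\,\|\lxr^{3/2+\ep}f\|_{L^2}$, which is also $\le R/(2(1+\|w\|_{\CM}))$ once $T$ is chosen small enough.

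For contractivity, write $V_i := w*g_i$ and $A_i(t_1) := [V_i(t_1),\ga_f]$ and decompose
$$ S_{V_1}(t,t_1)A_1 S_{V_1}(t,t_1)^* - S_{V_2}(t,t_1)A_2 S_{V_2}(t,t_1)^* = S_{V_1}(A_1-A_2)S_{V_1}^* + \bigl(S_{V_1}A_2 S_{V_1}^* - S_{V_2}A_2 S_{V_2}^*\bigr).$$
The first summand is handled by \eqref{eq: Density estimates I, two-sided} combined with \eqref{eq: commutator estimate plan} applied to $A_1-A_2 = [w*(g_1-g_2),\ga_f]$, and the second by \eqref{eq: Density estimates I, continuity} together with \eqref{eq: commutator estimate plan} for the fixed operator $A_2$. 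The linear-piece difference is handled analogously via \eqref{eq: Density estimates III, continuity}. All error terms carry a positive power of $T$, so setting
\begin{equation}\label{eq: choice of T}
T := c\,\min\!\Bigl(\bigl((1+\|w\|_{\CM})R\bigr)^{-\frac{4}{1+2\ep}}, \ (1+\|w\|_{\CM})^{-8}\bigl(1+\|\lxr^{3/2+\ep}f(|\xi|^2)\|_{L^2}\bigr)^{-4}\Bigr)
\end{equation}
for a small absolute constant $c$ closes the contraction and gives the unique fixed point $g\in B_R$ of part $(i)$. Part $(ii)$ follows by defining $Q(t)$ via the stated formula and using \eqref{eq: Density estimates I, two-sided}, the operator Strichartz estimates of Lemma \ref{lem: Strichartz estimates for operator kernels}, and \eqref{eq: commutator estimate plan} to verify $Q \in C_t(I;\CH^{1/2})$ and $\rho_Q = g$; operator-level uniqueness reduces to density-level uniqueness from (i).

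The main obstacle is the exponent arithmetic in \eqref{eq: commutator estimate plan}: the weights $|\nabla|^{1/2}$ and $\lg\nabla\rg^{(1+2\ep)/4}$ on either side of $[V,\ga_f]$ combine to order $(3+2\ep)/4$, and the Peetre splitting has to produce precisely a $3/2+\ep$ weight on $f$ (used to invoke hypothesis \eqref{LWP assumption}) and no more than a $1+\ep$ weight on $V$ (to match the $H^{1+\ep}$ norm that one can control via $V = w*g$). The rest is standard contraction bookkeeping, aided by the smallness of $T$.
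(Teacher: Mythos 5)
Your proposal is correct and follows essentially the same line as the paper: a contraction on a ball of density functions on $[-T,T]$, with the linear piece controlled by Proposition \ref{Density estimates III}, the Duhamel piece by Lemma \ref{Density estimates I} together with a kernel-side Sobolev-weight estimate for $[w*g,\gamma_f]$, and part $(ii)$ by bootstrapping the fixed point back to the operator equation. The only cosmetic differences are that the paper phrases the key inner estimate as a bound on $\|(w*g)\gamma_f\|_{\mathcal{H}^{(3+2\epsilon)/4}}$ rather than directly on $\||\nabla|^{1/2}[w*g,\gamma_f]|\nabla|^{1/2}\|_{\mathcal{H}^{(1+2\epsilon)/4}}$, which is equivalent after dominating $|\xi|^{1/2}\langle\xi\rangle^{(1+2\epsilon)/4}\le\langle\xi\rangle^{(3+2\epsilon)/4}$ and splitting the commutator by the triangle inequality (exactly as you do), and that the paper works with the full $\mathcal{D}^{1+\epsilon}(I)$ ball rather than your $L^2_tH^{1+\epsilon}_x$ ball, which makes the self-mapping bookkeeping slightly cleaner but is substantively the same argument.
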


\begin{remark}
The assumption \eqref{LWP assumption} is sufficient for Strichartz estimates in Proposition \ref{Density estimates III}. Indeed, by H\"older's inequality, we have 
$$\|\lxr^{\frac{\epsilon}{2}}f(|\xi|^2)\|_{L^1_\xi(\mathbb{R}^3)}\leq \|\lxr^{-\frac{3+\epsilon}{2}}\|_{L^2_\xi(\mathbb{R}^3)}\|\lxr^{\frac{3}{2}+\epsilon}f(|\xi|^2)\|_{L^2_\xi(\mathbb{R}^3)}\lesssim \|\lxr^{\frac{3}{2}+\epsilon}f(|\xi|^2)\|_{L^2_\xi(\mathbb{R}^3)},$$
but one can also check that by changing variables, $\|\lxr^{\frac{\epsilon}{2}}f(|\xi|^2)\|_{L^1_\xi} < \I$ if and only if $\|\mu^{\frac{3}{2}+\frac{\epsilon}{4}}\|_{L_\lambda^1(0,1)}<\infty$. Therefore, from the assumption \eqref{LWP assumption}, replacing $\epsilon$ by $\frac{\epsilon}{4}$, we obtain $\|\mu^{\frac{3}{2}+\epsilon}\|_{L_\lambda^1(0,1)}<\infty$ and $\|\lxr^{\frac{3}{2}+4\epsilon}f(|\xi|^2)\|_{L^2_\xi} < \I$.
\end{remark}

\begin{remark}\label{remark: norm choice remark}
The choice of the norm $\|\cdot\|_{\mathcal{D}^{1+\epsilon}(I)}$ is technical, but it is important for the local well-posedness of the NLH (Proposition \ref{LWP for density NLH}); in Section \ref{sec: proof of the conservation law}, based on this, the conservation of the relative free energy will be proved approximating by regular solutions with the continuity of initial-data-to-solution. Indeed, in our local well-posedness, we want both the density function $\rho_Q$ and the potential $w*\rho_Q$ for NLH to reside in the complete function space $\mathcal{B}_R^{1+\epsilon}(I)$. Note that in order to make the potential energy well defined with a measurable interaction potential $w$, $\|\rho_Q(t)\|_{L^2(\mathbb{R}^3)}$ must be bounded, and thus the norm $\|\cdot\|_{L_t^\infty(I; L_x^2)}$ is included in the choice of the norm $\|\cdot\|_{\mathcal{D}^{1+\epsilon}(I)}$. However, by the linear estimate \eqref{eq: pointwise bound for density}, one may expect that $\|\rho_Q(t)\|_{L^2(\mathbb{R}^3)}$ can be bounded if the operator norm $\||\nabla|^{\frac{1}{2}}Q_0|\nabla|^{\frac{1}{2}}\|_{\mathcal{H}^{\frac{1+2\epsilon}{4}}}$ is finite. On the other hand, if initial perturbation $Q_0$ satisfies such an operator bound, it would be possible that $\|\rho_Q\|_{L_t^2(I; \dot{H}_x^{1+\epsilon})}$ is bounded from the linear estimate (see Lemma \ref{lem: Strichartz estimates for density functions}) via the standard perturbative argument. For this reason, it is natural to include the norm $\|\cdot\|_{L_t^2(I; \dot{H}_x^{1+\epsilon})}$ in $\|\cdot\|_{\mathcal{D}^{1+\epsilon}(I)}$. Indeed, at first, one may attempt to employ a simpler norm, for instance, $\|g\|_{L_t^\infty\cap L_t^2(I; H_x^{1+\epsilon})}$. Then, more regularity is required to bound $\||\nabla|^{1+\epsilon}\rho_Q\|_{L^2(\mathbb{R}^3)}$, and it makes impossible to include singular interaction potentials.
\end{remark}

\begin{proof}
For $(i)$, we aim to show that 
$$\Phi(g)(t):=\rho_{S_{w*g}(t,0)_\star Q_0}-i\rho\bigg(\int_0^t S_{w*g}(t,t_1)_\star[w*g(t_1),\gamma_f]dt_1\bigg)$$
is contractive on the ball $\mathcal{B}_R^{1+\epsilon}(I):=\{g: \|g\|_{\mathcal{D}^{1+\epsilon}(I)}\leq R\}$, where $I=[-T,T]$ and $T>0$ is a small number to be chosen later. Indeed, for $g\in \mathcal{B}_R^{1+\epsilon}(I)$, applying Proposition \ref{Density estimates III} for the first term and Lemma \ref{Density estimates I} for the integral term in $\Phi(g)$, we have 
$$\|\Phi(g)\|_{\mathcal{D}^{1+\epsilon}(I)}\leq \frac{R}{2}+2c_1\|(w*g)\gamma_f\|_{L_t^1(I;\mathcal{H}^{\frac{3+2\epsilon}{4}})}.$$
On the right hand side, by the fractional Leibniz rule and the Plancherel theorem,
\begin{equation}\label{eq: NLH integral term estimate}
\begin{aligned}
\|(w*g)\gamma_f\|_{\mathcal{H}^{\frac{3+2\epsilon}{4}}}&=\big\|\langle\nabla_x\rangle^{\frac{3+2\epsilon}{4}}\big((w*g)\gamma_f\langle\nabla\rangle^{\frac{3+2\epsilon}{4}}\big)(x,x')\big\|_{L_{x'}^2L_x^2}\\
&\lesssim\|g\|_{H_x^{1+\epsilon}}\|\langle\xi\rangle^{\frac{3}{2}+\epsilon}f(|\xi|^2)\|_{L_\xi^2},
\end{aligned}
\end{equation}
since $\|w\|_{\mathcal{M}}<\I$. Hence, it follows that 
\begin{equation}\label{eq: LWP proof 1}
\|\Phi(g)\|_{\mathcal{D}^{1+\epsilon}(I)}\leq \frac{R}{2}+\tilde{c}_1T^{\frac{1}{2}}R\|\langle\xi\rangle^{\frac{3}{2}+\epsilon}f(|\xi|^2)\|_{L_\xi^2}.
\end{equation}
Similarly, by Proposition \ref{Density estimates III} and Lemma \ref{Density estimates I}, we estimate the difference 
$$\begin{aligned}
\Phi(g)(t)-\Phi(g')(t)&=\rho_{S_{w*g}(t,0)_\star Q_0}-\rho_{S_{w*g'}(t,0)_\star Q_0}\\
&\quad-i\rho\bigg(\int_0^t \Ck{S_{w*g}(t,t_1)_\star[w*g(t_1),\gamma_f]-S_{w*g'}(t,t_1)_\star[w*g(t_1),\gamma_f]}dt_1\bigg)\\
&\quad-i\rho\bigg(\int_0^t S_{w*g'}(t,t_1)_\star[w*(g-g')(t_1),\gamma_f]dt_1\bigg)
\end{aligned}$$
with $g,g'\in \mathcal{B}_R(I)$. Then, one can show that 
\begin{equation}\label{eq: LWP proof 2}
\begin{aligned}
&\|\Phi(g)-\Phi(g')\|_{\mathcal{D}^{1+\epsilon}(I)}\\
&\leq \frac{1}{2}\|g-g'\|_{L_t^2(I; H_x^{1+\epsilon})}+c_1T^{\frac{1+2\epsilon}{4}}\|(w*g)\gamma_f\|_{L_t^1(I; \mathcal{H}^{\frac{3+2\epsilon}{4}})}\|g-g'\|_{L_t^2(I; H_x^{1+\epsilon})}\\
&\quad+c_1T^{\frac{1+2\epsilon}{4}}\|(w*(g-g'))\gamma_f\|_{L_t^1(I; \mathcal{H}^{\frac{3+2\epsilon}{4}})}\\
&\leq \bigg\{\frac{1}{2}+\tilde{c}_1T^{\frac{1+2\epsilon}{4}}(R+1)T^{\frac{1}{2}}\|\langle\xi\rangle^{\frac{3}{2}+\epsilon}f(|\xi|^2)\|_{L_\xi^2}\bigg\}\|g-g'\|_{L_t^2(I; H_x^{1+\epsilon})},
\end{aligned}
\end{equation}
where \eqref{eq: NLH integral term estimate} is used in the last step. Therefore, taking 
\begin{equation}\label{eq: choice of T}
T=\min\bigg\{\frac{1}{16\tilde{c}_1^2\|\langle\xi\rangle^{\frac{3}{2}+\epsilon}f(|\xi|^2)\|_{L_\xi^2}^2}, (R+1)^{-\frac{4}{1+2\epsilon}} \bigg\}
\end{equation}
in \eqref{eq: LWP proof 1} and \eqref{eq: LWP proof 2}, we conclude that $\Phi(g)$ is contractive on $\mathcal{B}_R^{1+\epsilon}(I)$.

For $(ii)$, using the fixed point $g\in\mathcal{B}_R^{1+\epsilon}(I)$ such that $g=\Phi(g)$, we construct a quantum state 
\begin{equation}\label{eq: Q construction}
Q_g(t):=S_{w*g}(t,0)_\star Q_0-i\int_0^t S_{w*g}(t,t_1)_\star\big[w*g(t_1),\gamma_f\big]dt_1.
\end{equation}
By the completely same argument as the proof of \cite[Lemma 2.1]{H1}, we have $Q(t)\in C(I;\CH^\tw)$.
Next, taking the density of the equation \eqref{eq: Q construction} and repeating the estimates in the proof of $(i)$, one can show that $\rho_{Q_g}$ is a fixed point for $\Phi$ in $\mathcal{B}_R^{1+\epsilon}(I)$. Then, by uniqueness for the density equation \eqref{eq: NLH density}, it implies that $g=\rho_{Q_g}$ and $Q_g$ is the solution to the operator equation \eqref{eq: NLH original}. For uniqueness, we assume that $\tilde{Q}(t)$ is a solution to \eqref{eq: NLH original} with the same initial data $Q_0$ such that $\|\rho_{\tilde{Q}}\|_{\mathcal{D}^{1+\epsilon}(I)}\leq R$. Then, by the standard argument, it follows that $\rho_{\tilde{Q}}$ is also a fixed point for \eqref{eq: NLH density} in $\mathcal{B}_R^{1+\epsilon}(I)$. Therefore, we conclude that $\rho_{\tilde{Q}}=\rho_Q$ and $\tilde{Q}=Q$ in $C_t(I;\mathcal{H}^{\frac{1}{2}})$. 
\end{proof}

\section{Global well-posedness of the NLH: Proof of the main theorem}\label{sec: Global well-posedness of the NLH}

In this last section, we show that the local-in-time solution $\gamma(t)$ to the NLH \eqref{NLH0} with initial data $\gamma_0=\gamma_f+Q_0\in\mathcal{K}_f$, obtained in the previous section, preserves the relative free energy $\mathcal{F}_f(\gamma(t)|\gamma_f)=\mathcal{F}_f(\gamma_0|\gamma_f)$, and then we employ the conservation law to establish its global existence. For the proof, we follow the strategy in Lewin-Sabin \cite{LS3}; we construct a sequence $\{Q_{n}(t)\}_{n=1}^\infty$ of solutions to the regularized equation
\begin{equation}\label{regularized NLH}
\left\{\begin{aligned}i\partial_t Q_n&=\big[-\Delta+w_n*\rho_{Q_n}, Q_n+\gamma_{f_n}\big],\\
Q_n(0)&=Q_{0;n}
\end{aligned}\right.
\end{equation}
obeying the relative free energy conservation law, and we prove the conservation law for $\gamma(t)$ sending $n\to\infty$. 

\subsection{Choice of the regularized equation}\label{sec: choice of the regularized equation}
First, we take a sequence of $\{w_n\}_{n=1}^\infty$ of regular interaction potentials such that 
\begin{equation}\label{eq: wn construction}
 C_c^\infty(\mathbb{R}^3) \ni w_n \to w\quad \textup{in }\mathcal{M}(\mathbb{R}^3).
\end{equation}
For $\mathcal{H}$-admissible $S: [0,1]\to\mathbb{R}^+$, it is shown in \cite[Lemma 10]{LS3} that there exists a sequence $\{S_n\}_{n=1}^\infty$ of $\mathcal{H}$-admissible functions on $[0,1]$ such that
\begin{align}\label{eq: Sn construction}
&S_n'(x)\leq S'(x)\textup{ on }[0,1],\\
&S_n^{(j)}(x)\to S^{(j)}(x) \textup{ uniformly on any compact subset of } (0,1) \textup{ for } j=1,2, \label{eq:uniform}
\end{align}
and 
\begin{equation}\label{eq: fn construction}
\lim_{n\to\infty}\|f_n(|\xi|^2)-f(|\xi|^2)\|_{L^1(\mathbb{R}^3)}=\lim_{n\to\infty}\big\|\langle\xi\rangle^{\frac{3}{2}+\epsilon}\big(f_n(|\xi|^2)-f(|\xi|^2)\big)\big\|_{L^2(\mathbb{R}^3)}=0,
\end{equation}
where $f=(S')^{-1}$ and $f_n=(S_n')^{-1}$. Note that \eqref{eq: Sn construction} implies $f(\lambda)\leq f_n(\lambda)$ for all $\lambda$.

A sequence $\{Q_{0;n}\}_{n=1}^\infty$ of smooth finite-rank initial perturbations is also constructed in \cite[Lemma 10]{LS3}, but we include an improved convergence for density functions (see \eqref{eq: density approx seq} below).

\begin{lemma}[Construction of initial data sequence]\label{lem: construction of initial data}
Suppose that $S: [0,1]\to\mathbb{R}^+$ is $\mathcal{H}$-admissible, $\{S_n\}_{n=1}^\infty$ satisfies \eqref{eq: Sn construction} and \eqref{eq: fn construction}, and $\gamma_0=\gamma_f+Q_0\in\mathcal{K}_f$.
For $R\geq1$, we assume that $\|V\|_{L_t^2(I; H_x^{1+\epsilon})}\leq R$, $I\ni t_0$ and $0 < |I| \ll R^{-\frac{4}{1+2\epsilon}}$.
Then, there exist a sequence $\{Q_{0;n}\}_{n=1}^\infty$ of finite-rank smooth operators such that the followings hold:
\begin{align}
&\lim_{n\to\infty}\|\rho_{S_V(t,t_0)_\star Q_0}-\rho_{S_V(t,t_0)_\star Q_{0;n}}\|_{\mathcal{D}^{1+\epsilon}(I)}+\|Q_0-Q_{0;n}\|_{\FS^2}=0,\label{eq: density approx seq} \\
&\lim_{n\to\infty}\mathcal{H}_{S_n}(\gamma_{f_n}+Q_{0;n}|\gamma_{f_n})=\mathcal{H}_S(\gamma_f+Q_0|\gamma_f).\label{eq: relative entropy approx seq}
\end{align}
\end{lemma}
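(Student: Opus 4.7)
The construction of $\{Q_{0;n}\}$ yielding $\|Q_{0;n}-Q_0\|_{\FS^2}\to 0$ and \eqref{eq: relative entropy approx seq} is contained in Lewin--Sabin \cite[Lemma 10]{LS3}: combine a spectral truncation of $Q_0$ onto its $k_n$ leading eigenvectors with a Fourier-space projection tuned to $\gamma_{f_n}$, and extract diagonally in $(k_n,\delta_n)\to(\infty,0)$; the verification uses Theorem \ref{thm: basic properties of relative entropy}, Klein's inequality \eqref{eq: Klein's inequality}, and the convergences \eqref{eq: Sn construction}--\eqref{eq: fn construction}. The novel content is the density-convergence component of \eqref{eq: density approx seq}, on which my plan focuses.

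My plan is to use the functional-calculus representation from Lemma \ref{lem: integral representation of the relative entropy} to write
\begin{equation}
Q_0-Q_{0;n}=\int_0^1\bigl(Q_0^{(\mu)}-Q_{0;n}^{(\mu_n)}\bigr)\,d\lambda,
\end{equation}
with $\mu=S'(\lambda)$, $\mu_n=S_n'(\lambda)$, $Q_0^{(\mu)}=\mathbbm{1}(\gamma_0\ge\lambda)-\Pi_\mu^-$, and $Q_{0;n}^{(\mu_n)}=\mathbbm{1}(\gamma_{f_n}+Q_{0;n}\ge\lambda)-\Pi_{\mu_n}^-$, and to reduce matters to (i) pointwise convergence $\|\rho_{S_V(t,t_0)_\star(Q_0^{(\mu)}-Q_{0;n}^{(\mu_n)})}\|_{\mathcal{D}^{1+\epsilon}(I)}\to 0$ for a.e. $\lambda\in(0,1)$, together with (ii) a uniform-in-$n$ integrable majorant in $\lambda$. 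For the initial-data convergence underlying (i), $\gamma_{f_n}+Q_{0;n}\to\gamma_0$ in the strong operator topology (a consequence of $\|Q_{0;n}-Q_0\|_{\FS^2}\to 0$ and of the strong-operator convergence $\gamma_{f_n}\to\gamma_f$, itself deduced from the weighted-$L^2$ estimate in \eqref{eq: fn construction}); hence $\mathbbm{1}(\gamma_{f_n}+Q_{0;n}\ge\lambda)\to\mathbbm{1}(\gamma_0\ge\lambda)$ strongly for $\lambda$ outside the at-most-countable point spectrum of $\gamma_0$, and combined with $\mu_n\to\mu$ from \eqref{eq:uniform} (so $\Pi_{\mu_n}^-\to\Pi_\mu^-$ strongly), this yields $Q_0^{(\mu)}-Q_{0;n}^{(\mu_n)}\to 0$ strongly pointwise in $\lambda$. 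For the majorant (ii), I would apply Proposition \ref{Density estimates III} to $Q_0^{(\mu)}$ and $Q_{0;n}^{(\mu_n)}$ separately and invoke \eqref{LWP assumption}, which forces $\|\mu^{3/2+\epsilon}\|_{L^1_\lambda(0,1)}<\infty$.

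The principal obstacle is (i): converting pointwise strong convergence of the initial data into convergence of the densities in $\mathcal{D}^{1+\epsilon}(I)$, uniformly in $V\in\mathcal{B}_R^{1+\epsilon}(I)$. To this end, my plan is to re-examine the proof of Lemma \ref{Density estimates II} and extract a ``difference version'' of its density bound applicable to $Q_0^{(\mu)}-Q_{0;n}^{(\mu_n)}$, which need not be of the Fermi-sea-plus-admissible-perturbation form. In that proof, the high-frequency contributions are controlled via Lemma \ref{Density estimates I} by a Hilbert--Schmidt Sobolev norm (which vanishes here by interpolation between the $\FS^2$-convergence and the uniform $\mathcal{H}^{1/2}$-boundedness supplied by Klein's inequality), and the low-frequency Duhamel terms depend linearly on $\textup{Tr}(-\Delta-\mu)(\cdot)^{1/2}$ and on the operator norm of the initial datum, both of which vanish by the pointwise convergences above. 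Lebesgue dominated convergence in $\lambda$ then closes the argument.
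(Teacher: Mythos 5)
Your handling of \eqref{eq: relative entropy approx seq} and the $\mathfrak{S}^2$-convergence is fine and matches the paper's (which in fact uses its own Lemma \ref{lem: partial finite-dimensional approximation} and the operators $X_n = (\gamma_{f_n}/\gamma_f)^{1/2}$ rather than merely citing \cite{LS3}, but the substance is the same). The density-convergence part of your plan, however, has a genuine gap and does not follow the paper's route.

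The critical problem is the pointwise-in-$\lambda$ comparison between $Q_0^{(\mu)}$ and $Q_{0;n}^{(\mu_n)}:=\mathbbm{1}(\gamma_{f_n}+Q_{0;n}\ge\lambda)-\Pi_{\mu_n}^-$. All you can extract for fixed $\lambda$ is strong operator convergence, and that is far too weak for the density estimates: Lemma \ref{Density estimates I} and Lemma \ref{Density estimates II} require control in the Schatten--Sobolev norm $\big\||\nabla|^{1/2}(\cdot)|\nabla|^{1/2}\big\|_{\mathcal{H}^{(1+2\epsilon)/4}}$, i.e., essentially $\mathcal{H}^{(3+2\epsilon)/4}$, on the difference. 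Your proposed interpolation between $\mathfrak{S}^2 = \mathcal{H}^0$ and the $\mathcal{H}^{1/2}$ bound from Klein's inequality caps at $\mathcal{H}^{1/2}$ and cannot reach $\mathcal{H}^{(3+2\epsilon)/4}$. Worse, the $\mathfrak{S}^2$-smallness you want to feed into this interpolation is the convergence of the integrated quantity $Q_0-Q_{0;n}$; it does not pass to the slices $Q_0^{(\mu)}-Q_{0;n}^{(\mu_n)}$, since spectral projections such as $\mathbbm{1}(\gamma_{f_n}+Q_{0;n}\ge\lambda)$ are highly discontinuous in any Schatten norm even when the underlying operators are $\mathfrak{S}^2$-close. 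Note also that $Q_{0;n}^{(\mu_n)}$ is neither finite rank nor smooth even though $Q_{0;n}$ is, so the regularity gained in the construction is lost the moment you perform this decomposition. Finally, the low-frequency part of your plan invokes $\textup{Tr}(-\Delta-\mu)(Q_0^{(\mu)}-Q_{0;n}^{(\mu_n)})$, but this difference is not a perturbation of a single Fermi sea $\Pi_\mu^-$, so the positivity and cancellation structure that makes such traces well-behaved (and makes Lemma \ref{Density estimates II} applicable) is simply absent, and there is no reason for the quantity to vanish under the convergences you invoke.

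The paper's argument sidesteps the pointwise-in-$\lambda$ comparison entirely. It applies a frequency cutoff $\Pi_{k_n}^{-}=\mathbbm{1}(-\Delta\le k_n)$ to the \emph{full} difference $Q_0-Q_{0;n}$, splitting it as $(Q_0-\Pi_{k_n}^-Q_0\Pi_{k_n}^-)+\Pi_{k_n}^-(Q_0-Q_{0;n})\Pi_{k_n}^-+(\Pi_{k_n}^-Q_{0;n}\Pi_{k_n}^--Q_{0;n})$. The middle piece uses the crude bound $\lesssim k_n^{3/2+\epsilon}\|Q_0-Q_{0;n}\|_{\mathfrak{S}^2}$, which is viable because $k_n\to\infty$ can be chosen slowly relative to the rate of $\mathfrak{S}^2$-convergence. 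The two tail pieces are small by the relative-entropy control: only for the $Q_0$-tail does the paper invoke the $\lambda$-integral representation, and there it is applied to $Q_0^{(\mu)}$ alone (not to a difference), further split into $n\ge 2\mu$ and $n<2\mu$ regions, and controlled by $\textup{Tr}(-\Delta-\mu)Q_0^{(\mu)}$ plus dominated convergence via Lemma \ref{lem: integral representation of the relative entropy}. This preserves the Fermi-sea-perturbation structure at every step, which your decomposition destroys.
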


\begin{proof}
\textbf{(Step 1: Proof of \eqref{eq: relative entropy approx seq})}
By Lemma \ref{lem: partial finite-dimensional approximation}, there exists a sequence $\{\wt{Q}_{0;n}\}_{n=1}^\I$ of finite-rank smooth operators such that $\CH_S(\ga_f + \wt{Q}_{0;n}|\ga_f) \to \CH_S(\ga_f + Q_0 | \ga_f)$ and $0\leq\ga_f + \wt{Q}_{0;n}\leq 1$. Then, for $Q_{0;n}:= X_n \wt{Q}_{0;n} X_n$ with $X_n:= (\ga_{f_{n}}/\ga_f)^\tw$ by \eqref{eq: Sn construction}, we have
\begin{equation}\label{eq: gamma n Qn}
	0 \le \ga_{f_n} + Q_{0;n} = X_n (\ga_f + \wt{Q}_{0;n}) X_n \le X_n^2 \le 1,
\end{equation}
where $f_n$ will be chosen later and $S_n$ will be given accordingly.
By the monotonicity of the relative entropy (Theorem \ref{thm: basic properties of relative entropy} $(i)$) and the construction of $S_n$ (see the proof of \cite[Lemma 10]{LS2}), it follows that 
$$\begin{aligned}
\mathcal{H}_{S}(\ga_f+ \wt{Q}_{0;n}|\gamma_{f})&\geq \mathcal{H}_{S}(\gamma_{f_n}+Q_{0;n}|\gamma_{f_n})=\CH_{S_n}(\ga_{f_n}+Q_{0;n}|\ga_{f_n})+\mathcal{H}_{S-S_n}(\gamma_{f_n}+Q_{0;n}|\gamma_{f_n})\\
&\geq \CH_{S_n}(\ga_{f_n}+Q_{0;n}|\ga_{f_n}),
\end{aligned}$$
and consequently, 
$$\limsup_{n \to \I} \CH_{S_n}(\ga_{f_n}+Q_{0;n}|\ga_{f_n}) \le \CH_S(\ga_f + Q_0 | \ga_f).$$
On the other hand, by the weak lower semi-continuity (see Theorem \ref{thm: basic properties of relative entropy}), we have
$$\begin{aligned}
	\CH_S(\ga_f+Q | \ga_f) &\le \liminf_{n\to \I} \CH_S (\ga_{f_n} + Q_{0;n} | \ga_{f_n}) \\
	&= \liminf_{n\to \I} \Ck{ \CH_{S_n}(\ga_{f_n} + Q_{0;n} | \ga_{f_n}) + \CH_{\varphi_n} (\ga_{f_n} + Q_{0;n} | \ga_{f_n})  },
\end{aligned}$$
where $\ph_n := S-S_n$. For the second term, we note that by \eqref{eq: gamma n Qn}, the monotonicity and Lemma \ref{lem: partial finite-dimensional approximation},  
$$\begin{aligned}
0&\le \CH_{\varphi_n} (\ga_{f_n} + Q_{0;n} | \ga_{f_n})\le \CH_{\varphi_n} (\ga_f + \wt{Q}_{0;n} | \ga_f).
\end{aligned}$$
Hence, it suffices to show that $\CH_{\varphi_n} (\ga_f + \wt{Q}_{0;n} | \ga_f)\to 0$.
To show this, we claim that $\wt{Q}_{0;n}$ is replaced by $\Pi'_{1/j_n} \wt{Q}_{0;n} \Pi'_{1/j_n}$, where $\Pi'_\de := \II_{\de \le f(-\De) \le 1-\de}$ and $\{j_n\}_{n=1}^\infty$ is a sequence, going to infinity, to be chosen later. Indeed, $0 \le \ga_f + \Pi'_\de \wt{Q}_{0;n} \Pi'_\de \le 1$ for any $\de > 0$. Moreover, by the definition of the relative entropy and the monotonicity (Theorem \ref{thm: basic properties of relative entropy} $(ii)$), we have
$$\begin{aligned}
		\CH_{\ph_n}(\ga_f+ \Pi'_\de \wt{Q}_{0;n} \Pi'_\de |\ga_f)
		= \CH_{\ph_n}\big(\Pi'_\de (\ga_f + \wt{Q}_{0;n}) \Pi'_\de \big| \Pi'_\de \ga_f \Pi'_\de\big)
		\to \CH_{\ph_n}(\ga_f+\wt{Q}_{0;n}|\ga_f) \mbox{ as } \de \to 0.
\end{aligned}$$
By the claim, we have
$$\begin{aligned}
	&\CH_{\ph_n}(\ga_f+\wt{Q}_{0;n} | \ga_f)
	= \CH_{\ph_n}\big(\Pi_{1/j_n}' (\ga_f + \wt{Q}_{0;n} ) \Pi_{1/j_n}' \big| \Pi_{1/j_n}'\ga_f \Pi_{1/j_n}'\big).
\end{aligned}$$
Note that \cite[Theorem 4]{DHS1} enables us to write
$$\begin{aligned}
	&\CH_{\ph_n} (\Pi_{1/j_n}' \ga_n \Pi_{1/j_n}' | \Pi_{1/j_n}' \ga_f \Pi_{1/j_n}') \\
	&= \Tr \Big(\ph_n(\Pi_{1/j_n}' \ga_n \Pi_{1/j_n}') - \ph_n(\Pi_{1/j_n}' \ga_f \Pi_{1/j_n}') - \ph_n'(\Pi_{1/j_n}' \ga_f \Pi_{1/j_n}') \Pi_{1/j_n}' \wt{Q}_{0;n} \Pi_{1/j_n}' \Big),
\end{aligned}$$
where $\ga_n=\ga_f + \wt{Q}_{0;n}$.
By replacing $(\ph_n)_{n=1}^\I$ by its subsequence if necessary, we may assume that
$$\sup_{t \in [S'(1-\frac{1}{j_n}), S'(\frac{1}{j_n})]} |\ph_n(t)|+|\ph'_n(t)|+|\ph''_n(t)| \to 0 \quad \mbox{as } n \to \I$$
sufficiently rapidly. 
There exist $\wt{\ph}_n \in C_c^\I((0,1))$ satisfying $\ph_n = \wt{\ph}_n$ on $[S'(1-\frac{1}{j_n}), S'(\frac{1}{j_n})]$.
Then, $\wt{\ph}_n$ satisfies
$\wt{\ph}_n^{(j)} \to 0 \mbox{ uniformly on } \BR$ for $j =0,1,2$ sufficiently rapidly.
On the one hand, by \eqref{eq:uniform}, we get
$$\abs{\Tr\Big(\wt{\ph}_n'(\Pi_{1/j_n}' \ga_f \Pi_{1/j_n}') \Pi_{1/j_n}' \wt{Q}_{0;n} \Pi_{1/j_n}'\Big)}\le \|\wt{\ph}'_n\|_{L^\I(\BR)} \|\wt{Q}_{0;n}\|_{\FS^1} \to 0$$
as $n\to\infty$. On the other hand, by \cite[Theorem 1.6.1]{AP1}, we obtain 
$$\abs{\Tr \Big(\wt{\ph}_n(\Pi'_{1/j_n} \ga \Pi_{1/j_n}') - \wt{\ph}_n(\Pi'_{1/j_n} \ga_f \Pi_{1/j_n}') \Big)}\ls \|\wt{\ph}_n\|_{B^1_{\I,1}(\BR)} \|\wt{Q}_{0;n}\|_{\FS^1} \to 0$$
as $n\to\infty$, since $\|\wt{\ph}_n\|_{B^1_{\I,1}(\BR)} \ls \|\wt{\ph}_n\|_{L^\I(\BR)} + \|\wt{\ph}_n''\|_{L^\I(\BR)} \to 0$,
where $B^1_{\I,1}$ is the standard Besov norm.
	
\noindent \textbf{(Step 2: Proof of \eqref{eq: density approx seq})}
Since $\|Q_0-Q_{0;n}\|_{\FS^2} \to 0$ as $n \to \I$, replacing $Q_{0;n}$ by its subsequence but still denoting by $Q_{0;n}$, we can assume that the convergence $\|Q_0-Q_{0;n}\|_{\FS^2} \to 0$ is arbitrary fast. First, we write 
$$\begin{aligned}
	\|\rho_{S_V(t,t_0)_\star (Q_0-Q_{0;n})}\|_{\mathcal{D}^{1+\epsilon}(I)}&\leq\|\rho_{S_V(t,t_0)_\star (Q_0-\Pi_n^- Q_0 \Pi_n^-)}\|_{\mathcal{D}^{1+\epsilon}(I)}\\
	&\quad+\|\rho_{S_V(t,t_0)_\star (\Pi_n^- (Q_0 - Q_{0;n})\Pi_n^- )}\|_{\mathcal{D}^{1+\epsilon}(I)}\\
	&\quad+\|\rho_{S_V(t,t_0)_\star (\Pi_n^- Q_{0;n}\Pi_n^- - Q_{0;n})}\|_{\mathcal{D}^{1+\epsilon}(I)}\\
	&=:\textup{(I)}_n+\textup{(II)}_n+\textup{(III)}_n.
\end{aligned}$$
For the first term, by the representation \eqref{density of the linear evolution}, we have
$$\begin{aligned}
	\textup{(I)}_n&\leq \int_{n\geq 2\mu}+\int_{n< 2\mu}
	\|\rho_{S_V(t,t_0)_\star (Q_0^{(\mu)}-\Pi_n^-Q_0^{(\mu)}\Pi_n^-)}\|_{\mathcal{D}^{1+\epsilon}(I)} d\lambda=:A_n + B_n,
\end{aligned}$$
where $Q_0^{(\mu)}=\mathds{1}_{(\gamma_0\geq\lambda)}-\mathds{1}_{(\gamma_f\geq\lambda)}$, $\Pi_{\mu}^-=\mathds{1}_{(\gamma_f\geq\lambda)}$ and $\mu=S'(\lambda)$.
For $A_n$, applying Lemma \ref{Density estimates I} to the decomposition 
$$\begin{aligned}
	Q_0^{(\mu)}-\Pi_n^-Q_0^{(\mu)}\Pi_n^-&=\Pi_n^+Q_0^{(\mu)}\Pi_n^++\Pi_n^-\Pi_{2\mu}^+Q_0^{(\mu)}\Pi_n^++\Pi_{2\mu}^-Q_0^{(\mu)}\Pi_n^+\\
	&\quad+\Pi_n^+Q_0^{(\mu)}\Pi_{2\mu}^-+\Pi_n^+Q_0^{(\mu)}\Pi_{2\mu}^+\Pi_n^-.
\end{aligned}$$
and by symmetry, we obtain that 
$$A_n \lesssim\int_{n\geq 2\mu}\|\Pi_n^+Q_0^{(\mu)}\Pi_n^+\|_{\mathcal{H}^{\frac{3+2\epsilon}{4}}}+\|\Pi_n^-\Pi_{2\mu}^+Q_0^{(\mu)}\Pi_n^+\|_{\mathcal{H}^{\frac{3+2\epsilon}{4}}}+\|\Pi_{2\mu}^-Q_0^{(\mu)}\Pi_n^+\|_{\mathcal{H}^{\frac{3+2\epsilon}{4}}} d\lambda.$$
We observe that if $n\geq 2\mu$, then
$$\|\Pi_n^+Q_0^{(\mu)}\Pi_n^+\|_{\mathcal{H}^{\frac{3+2\epsilon}{4}}}\lesssim n^{-\frac{1-2\epsilon}{4}} \textup{Tr}(-\Delta-\mu)Q_0^{(\mu)}.$$
Moreover, by cyclicity of the trace and \eqref{eq: relative kinetic energy controls schatten 2}, we prove that 
$$\begin{aligned}
	\|\Pi_n^-\Pi_{2\mu}^+Q_0^{(\mu)}\Pi_n^+\|_{\mathcal{H}^{\frac{1}{2}}}&\lesssim n^{-\frac{1}{4}} \big\||\Delta+\mu|^{\frac{1}{4}}\Pi_{\mu}^+Q_0^{(\mu)}\Pi_{\mu}^+|\Delta+\mu|^{\frac{1}{2}}\big\|_{\mathfrak{S}^2}\\
	&=n^{-\frac{1}{4}}\Big\{\textup{Tr}\big(|\Delta+\mu|^{\frac{1}{2}}\Pi_{\mu}^+Q_0^{(\mu)}\Pi_{\mu}^+|\Delta+\mu|^{\frac{1}{2}}\Pi_{\mu}^+Q_0^{(\mu)}\Pi_{\mu}^+|\Delta+\mu|^{\frac{1}{2}}\big)\Big\}^{\frac{1}{2}}\\
	&\lesssim n^{-\frac{1}{4}}\big\||\Delta+\mu|^{\frac{1}{2}}\Pi_{\mu}^+Q_0^{(\mu)}\Pi_{\mu}^+|\Delta+\mu|^{\frac{1}{2}}\big\|_{\mathfrak{S}^2}^{\frac{1}{2}}\big\| Q_0^{(\mu)}|\Delta+\mu|^{\frac{1}{2}}\big\|_{\mathfrak{S}^2}^{\frac{1}{2}}\\
	&\lesssim n^{-\frac{1}{4}}\big\{\textup{Tr}(-\Delta-\mu)Q_0^{(\mu)}\big\}^{\frac{3}{4}}\lesssim n^{-\frac{1}{4}}\big\{\textup{Tr}(-\Delta-\mu)Q_0^{(\mu)}+1\big\}.
\end{aligned}$$
Thus, by complex interpolating with $\|\Pi_n^-\Pi_{2\mu}^+Q_0^{(\mu)}\Pi_n^+\|_{\mathcal{H}^1}\lesssim \textup{Tr}(-\Delta-\mu)Q_0^{(\mu)}$, we obtain 
$$\begin{aligned}
	\|\Pi_n^-\Pi_{2\mu}^+Q_0^{(\mu)}\Pi_n^+\|_{\mathcal{H}^{\frac{3+2\epsilon}{4}}}&\leq \|\Pi_n^-\Pi_{2\mu}^+Q_0^{(\mu)}\Pi_n^+\|_{\mathcal{H}^{\frac{1}{2}}}^{\frac{1-2\epsilon}{2}} \|\Pi_n^-\Pi_{2\mu}^+Q_0^{(\mu)}\Pi_n^+\|_{\mathcal{H}^1}^{\frac{1+2\epsilon}{2}}\\
	&\lesssim n^{-\frac{1-2\epsilon}{8}} \Big\{\textup{Tr}(-\Delta-\mu)Q_0^{(\mu)}+1\Big\},
\end{aligned}$$
and similarly,
$$\begin{aligned}
	\|\Pi_{2\mu}^- Q_0^{(\mu)}\Pi_n^+\|_{\mathcal{H}^{\frac{3+2\epsilon}{4}}}&\lesssim n^{-\frac{1-2\epsilon}{8}} (1+\mu)^{\frac{3+2\epsilon}{8}}\big\|Q_0^{(\mu)}|\Delta+\mu|^{\frac{1}{2}}\big\|_{\mathfrak{S}^2}\\
	&\lesssim n^{-\frac{1-2\epsilon}{8}} \big\{\textup{Tr}(-\Delta-\mu)Q_0^{(\mu)}+1+\mu^{\frac{3+2\epsilon}{4}}\big\}.
\end{aligned}$$
Therefore, it follows from the lower bound for the relative entropy (Lemma \ref{lem: integral representation of the relative entropy}) and H\"older's inequality that $A_n \to0$. For $B_n$, applying Strichartz estimates (Lemma \ref{Density estimates II}), we prove that 
$$B_n\lesssim\int_{n< 2\mu} \Big(\textup{Tr}(-\Delta-\mu)Q_0^{(\mu)}+1+\mu^{\frac{3}{2}+\epsilon}\Big) d\lambda.$$
Note that $n<2\mu=2S'(\lambda)$ if and only if $0\leq\lambda<(S')^{-1}(\frac{n}{2})=f(\frac{n}{2})\to 0$ as $n\to\infty$. Therefore, it follows that $B_n\to 0$.

For $\textup{(II)}_n$, by Lemma \ref{Density estimates I}, we obtain
$$\textup{(II)}_n\lesssim \|\Pi_n^- (Q_0 - Q_{0;n}) \Pi_n^-\|_{\CH^{\frac{3+2\epsilon}{4}}} 
\lesssim n^{\frac{3+2\ep}{4}} \|Q_0-Q_{0;n}\|_{\FS^2}\to 0$$
because we can assume that the convergence $\|Q_0-Q_{0;n}\|_{\FS^2}\to 0$ is arbitrary fast.

We can deal with $\textup{(III)}_n$ in the same way as $\textup{(I)}_n$ because we have already proved \eqref{eq: relative entropy approx seq}. Then, collecting all, we complete the proof.
\end{proof}

\subsection{Proof of global well-posedness}
Now, we are ready to prove our main result (Theorem \ref{main theorem}).

\subsubsection{Construction of approximate solutions}\label{sec: Construction of approximate solutions}
Suppose that $\gamma_0=\gamma_f+Q_0\in\mathcal{K}_f$, and take $\{w_n\}_{n=1}^\infty$, $\{S_n\}_{n=1}^\infty$ and $\{Q_{0;n}\}_{n=1}^\infty$ satisfying \eqref{eq: wn construction}-\eqref{eq: fn construction} and the properties in Lemma \ref{lem: construction of initial data}. Then, by Proposition \ref{LWP for density NLH}, one can construct the solution $Q(t)\in C_t(I;\mathcal{H}^{\frac{1}{2}})$ to the perturbation equation \eqref{NLH} with initial data $Q_0$ such that $\|\rho_Q\|_{\mathcal{D}^{1+\epsilon}(I)}\leq R$, where $R>0$ is given by \eqref{eq: R}, $I=[-T,T]$ and $0<T\ll R^{-\frac{4}{1+2\epsilon}}$. Moreover, one can employ Proposition \ref{LWP for density NLH} again to show that replacing $T>0$ by a smaller number independent of $n\gg1$ if necessary, for large $n\gg1$, the regularized equation \eqref{regularized NLH} has a unique solution $Q_n(t)\in C_t(I;\mathcal{H}^{\frac{1}{2}})$ such that $\|\rho_{Q_n}\|_{\mathcal{D}^{1+\epsilon}(I)}\leq R$.
Moreover, we have $Q_n(t) \in C_t(\R;\mathfrak{H}^4)$ by Proposition \ref{prop:GWP trace}.

\subsubsection{Approximation by $Q_n$}
Next, we claim that 
\begin{equation}\label{eq: approx. sol claim 2}
\lim_{n\to\infty}\|\rho_{Q_n}-\rho_{Q}\|_{\mathcal{D}^{1+\epsilon}(I)}
+\lim_{n \to \I}\|Q_n-Q\|_{C_t(I; \FS^2)}=0,
\end{equation}
\begin{equation}\label{eq: approx. sol claim 3}
	\CH_S(\ga_f + Q(t) | \ga_f )\le \liminf_{n \to \I} \CH_{S_n}(\ga_{f_n}+Q_n(t) | \ga_{f_n}).
\end{equation}
Indeed, by the same argument as Step 2 in the proof of Lemma \ref{lem: construction of initial data},
we obtain \eqref{eq: approx. sol claim 3}.
For the difference  
\begin{equation}\label{eq: operator approx}
\begin{aligned}
(Q-Q_n)(t)&=\big(S_{w*\rho_{Q}}(t,0)_\star Q_{0}-S_{w_n*\rho_{Q_n}}(t,0)_\star Q_{0}\big)\\
&\quad+S_{w_n*\rho_{Q_n}}(t,0)_\star(Q_0-Q_{0;n})\\
&\quad-i\int_0^t \Ck{S_{w*\rho_{Q}}(t,t_1)_\star[w*\rho_{Q}(t_1),\gamma_{f}]-S_{w_n*\rho_{Q_n}}(t,t_1)_\star\big[w*\rho_{Q}(t_1),\gamma_{f}\big]}dt_1\\
&\quad-i\int_0^t S_{w*\rho_{Q_n}}(t,t_1)_\star\big([w*\rho_{Q}(t_1),\gamma_f]-[w_n*\rho_{Q_n}(t_1), \gamma_{f_n}]\big)dt_1,
\end{aligned}
\end{equation}
we apply Proposition \ref{Density estimates III} and Lemma \ref{Density estimates I} again. Then, repeating the estimates in \eqref{eq: NLH integral term estimate}, one can show that
$$\begin{aligned}
\|\rho_{Q_n}-\rho_Q\|_{\mathcal{D}^{1+\epsilon}(I)}&\lesssim o_n(1)+\frac{1}{2}\|\rho_{Q_n}-\rho_Q\|_{L_t^2(I; H_x^\alpha)}\\
&\quad+\|\langle\xi\rangle^{\frac{3}{2}+\epsilon}(f_n-f)(|\xi|^2)\|_{L^2}+\|w_n-w\|_{\mathcal{M}},
\end{aligned}$$
where the implicit constant in the above inequality is independent of $n\gg1$ but may depend on the quantities $R$, $\|\rho_Q\|_{\mathcal{D}^{1+\epsilon}(I)}$, $\|w\|_{\mathcal{M}}$, $\|\langle\xi\rangle^{\frac{3}{2}+\epsilon}f(|\xi|^2)\|_{L^2}$ and $\|\mu^{\frac{3}{2}+\epsilon}\|_{L^1(0,1)}$. Therefore, we obtain $\|\rho_{Q_n}-\rho_Q\|_{\mathcal{D}^{1+\epsilon}(I)}\to 0$. Then, having this density convergence, coming back to \eqref{eq: operator approx}, one can show that $Q_n\to Q$ in $C_t(I; \mathcal{H}^\frac{1}{2})$ by the boundedness of the linear propagator (Lemma \ref{boundedness of linear propagator}) and the convergence of the initial data $Q_{0;n}\to Q_0$ in $\mathcal{H}^{\frac{1}{2}}$ (see \eqref{eq: density approx seq}).

\subsubsection{Proof of the conservation law}\label{sec: proof of the conservation law}
For each $t'\in[-T,T]$, the density convergence \eqref{eq: approx. sol claim 2} implies the potential energy convergence 
\begin{equation}\label{eq: potential energy convergence}
\iint_{\mathbb{R}^3\times\mathbb{R}^3}w(x-y)\rh_{Q_n}(t',x)\rh_{Q_n}(t',y)dxdy
\to \iint_{\mathbb{R}^3\times\mathbb{R}^3}w(x-y)\rh_Q(t',x)\rh_Q(t',y)dxdy.
\end{equation}
Therefore, by the lower semi-continuity of the relative entropy \eqref{eq: approx. sol claim 3} and the conservation of the relative free energy for regular solutions \cite[Proposition 7]{LS2}, it follows that 
$$\begin{aligned}
\mathcal{F}_f(\gamma_f + Q(t')|\gamma_f)
&\leq \liminf_{n \to \I} \mathcal{F}_{f_n}(\gamma_{f_n} + Q_n(t')|\gamma_{f_n}) \\
&=\lim_{n \to \I} \mathcal{F}(\gamma_f + Q_{0;n}|\gamma_f) = \mathcal{F}(\gamma_f + Q_{0}|\gamma_f).
\end{aligned}$$
Next, repeating the same argument to the backward evolution from $t=t'$ to $t=0$, one can show the reverse inequality $\mathcal{F}(\gamma_f + Q_0|\gamma_f)\leq\mathcal{F}(\gamma_f + Q(t')|\gamma_f)+o_n(1)$. Since $t'$ is arbitrary, these prove the conservation of the relative free energy.

\subsubsection{Proof of global well-posedness}
By the conservation law, we have $\mathcal{F}(\gamma_f + Q_0|\gamma_f)\gtrsim \mathcal{H}(\gamma_f + Q(T)|\gamma_f)$. Then, making the solution evolve forward/backward-in-time from $t=\pm T$, one can show that the solution exists until $t=\pm 2T$ with the relative free energy conservation law. Repeating this, one can extend the existence time arbitrarily long.

\appendix
\section{1D case}\label{sec: 1D case}

The same problem can be formulated in lower dimensional cases ($d=1,2$), where the equation \eqref{NLH0}, the perturbation equation \eqref{NLH},  the relative entropy $\mathcal{H}_S(\gamma|\gamma_f)$ (see \eqref{eq: relative entropy}), the relative free energy $\mathcal{F}_f(\gamma|\gamma_f)$ (see \eqref{eq: relative free energy}) and the operator space $\mathcal{K}_f$ (see \eqref{eq: operator class}) are defined by the same way. However, the last condition in $\mathcal{H}$-admissibility (see Definition \ref{def: H-admissible}) is replaced by $\|(S')_+\|_{L^{\frac{d}{2}}(0,1)}<\infty$. We refer to Lewin-Sabin \cite{LS3} for more details.

In this appendix, we provide the statement of the analogous global well-posedness in 1D and give a sketch of the proof.

\begin{theorem}\label{thm: 1D GWP}
Let $d=1$. Suppose that $S:[0.1] \to \BR^+$ is $\CH$-admissible, and that $f := (S')^{-1}$ satisfies $\|\lxr^{\frac{1+\epsilon}{2}} f(|\xi|^2)\|_{L^2_\xi}<\infty$ for small $\epsilon>0$, and $w\in\mathcal{M}(\mathbb{R})$ is a finite measure on $\BR^d$ such that $\|\hat{w}_-\|_{L^\infty}<\frac{K_{\textup{LT,S}}}{\sqrt{2\pi}}$, where $K_{\textup{LT,S}}$ is the best constant for the Lieb-Thirring inequality $\mathcal{H}_S(\gamma|\gamma_f)\geq K_{\textup{LT,S}}\|\rho_{\gamma-\gamma_f}\|_{L^2}^2$ (see \cite[Lemma 9]{LS3}). Then, given initial data $\ga_0  \in \CK_f$, there exists a unique global solution $\gamma(t)$ to the NLH $i\partial_t\gamma=[-\Delta+w*\rho_\gamma,\gamma]$ with initial data $\ga_0$ such that $\gamma(t)\in \CK_f$ and $\CF_f(\gamma(t)|\ga_f) = \CF_f(\ga_0 | \ga_f)$ for all $t\in\mathbb{R}$.
\end{theorem}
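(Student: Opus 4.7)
My plan is to follow the Lewin-Sabin strategy of Section~\ref{sec: Global well-posedness of the NLH} essentially verbatim, exploiting one key simplification that is special to $d=1$: Klein's inequality \eqref{eq: Klein's inequality} already controls an operator norm strong enough to bound $\|\rho_Q\|_{L^2_x}$. The starting point is the one-dimensional density estimate
\[
\|\rho_\gamma\|_{L^2_x}\;\lesssim\;\|\gamma\|_{\CH^{(1+\epsilon)/4}},
\]
valid for any $\epsilon>0$, which follows by applying Cauchy-Schwarz to $\widehat{\rho_\gamma}(\xi)=(2\pi)^{-1/2}\int_\R \widehat\gamma(\xi_1,\xi-\xi_1)d\xi_1$ with the weight $\langle\xi_1\rangle^{(1+\epsilon)/2}\langle\xi-\xi_1\rangle^{(1+\epsilon)/2}$; the reciprocal of this weight is uniformly bounded in $\xi$ via the $L^2(\R)*L^2(\R)\hookrightarrow L^\infty(\R)$ inclusion applied to $\langle\cdot\rangle^{-(1+\epsilon)/2}\in L^2(\R)$. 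No half-derivative factors are required, in contrast to the three-dimensional Lemma~\ref{lem: Strichartz estimates for density functions}. A $TT^*$ argument then upgrades this to an $L^\infty_tL^2_x\cap L^2_t\dot H^{(1+\epsilon)/2}_x$ bound on $\rho_{e^{it\Delta}\gamma_0e^{-it\Delta}}$ in terms of $\|\gamma_0\|_{\CH^{(1+\epsilon)/4}}$, and the 1D analogs of Lemmas~\ref{Density estimates I}--\ref{Density estimates II} and Proposition~\ref{Density estimates III} for the perturbed flow follow by exactly the arguments of Section~\ref{sec: Density function estimates for linear Schrodinger flows}, using $\|w*g\|_{L^2_x}\le\|w\|_{\CM}\|g\|_{L^2_x}$ for the finite measure $w$.

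With these estimates in hand, local well-posedness of the density equation \eqref{eq: NLH density} and of the perturbation equation \eqref{NLH} follows by the contraction mapping argument of Proposition~\ref{LWP for density NLH}, on a time interval $I$ whose length depends only on $\|w\|_{\CM}$ and $\|\langle\xi\rangle^{(1+\epsilon)/2}f(|\xi|^2)\|_{L^2_\xi}$. Crucially, for self-adjoint $Q$ the following chain of inequalities holds:
\[
\|Q\|_{\CH^{(1+\epsilon)/4}}\;\le\;\|\langle\nabla\rangle^{(1+\epsilon)/2}Q\|_{\FS^2}\;\le\;\|\langle\nabla\rangle Q\|_{\FS^2}\;\lesssim\;\CH(\gamma_f+Q|\gamma_f)^{1/2},
\]
where the first inequality uses the AM-GM bound $\langle\xi\rangle^{\alpha}\langle\xi'\rangle^{\alpha}\le\tfrac12(\langle\xi\rangle^{2\alpha}+\langle\xi'\rangle^{2\alpha})$ together with the symmetry $|\widehat Q(\xi,\xi')|=|\widehat Q(\xi',\xi)|$ from self-adjointness, the second uses $(1+\epsilon)/2\le 1$, and the third is Klein's inequality \eqref{eq: Klein's inequality}. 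Consequently, the local existence time depends only on $\CH(\gamma_0|\gamma_f)$ and $\|w\|_{\CM}$, and the data-to-solution map is continuous in the $\CH^{(1+\epsilon)/4}$-topology.

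For conservation of the relative free energy, I would mimic Section~\ref{sec: Global well-posedness of the NLH}: choose regularizations $w_n\in C_c^\infty(\R)\to w$ in $\CM$, $\CH$-admissible $S_n$ with $f_n\to f$ as in the 1D analog of \eqref{eq: fn construction}, and smooth finite-rank perturbations $Q_{0;n}$ as in the 1D version of Lemma~\ref{lem: construction of initial data}; solve the regularized problem \eqref{regularized NLH} to obtain sufficiently regular solutions $Q_n(t)$ whose relative free energy is conserved by a direct trace computation (cf.\ \cite[Proposition~7]{LS2}); and let $n\to\infty$. The local estimates above yield $\rho_{Q_n}\to\rho_Q$ in $L^\infty_tL^2_x$ on the local interval, so the potential-energy term passes to the limit uniformly in $t$; combined with the weak lower semi-continuity of the relative entropy (Theorem~\ref{thm: basic properties of relative entropy} $(iii)$) applied in both time directions, this yields the equality $\CF_f(\gamma(t)|\gamma_f)=\CF_f(\gamma_0|\gamma_f)$.

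Finally, to rule out blow-up I would combine Plancherel with the 1D Lieb-Thirring inequality and the coercivity assumption $\|\widehat w_-\|_{L^\infty}<K_{\mathrm{LT},S}/\sqrt{2\pi}$ to obtain
\[
\CF_f(\gamma|\gamma_f)\;\ge\;\eta\,\CH(\gamma|\gamma_f)
\]
for some $\eta>0$; together with the conservation law this yields the a priori bound $\CH(\gamma(t)|\gamma_f)\le\eta^{-1}\CF_f(\gamma_0|\gamma_f)$ for all $t$ in the interval of existence, and since the local existence time depends only on this quantity and $\|w\|_{\CM}$, iterating the local theory produces the global solution. The main delicate point, exactly as in three dimensions, is ensuring that the density convergence $\rho_{Q_n}\to\rho_Q$ is quantitative enough that the potential energy, and hence the conservation law, pass to the limit; this is precisely the reason the local well-posedness is phrased in a topology controlled by Klein's inequality, which is possible in one dimension thanks to the lower admissible regularity $(1+\epsilon)/4<1/2$.
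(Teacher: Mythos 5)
Your proposal is correct and follows essentially the same route as the paper's Appendix A: the 1D density Strichartz estimate at the low regularity $\langle\nabla\rangle^{(1+\epsilon)/4}\gamma_0\langle\nabla\rangle^{(1+\epsilon)/4}\in\FS^2$, local well-posedness of the coupled $(Q,\rho_Q)$ system in $C_t\CH^{(1+\epsilon)/4}\times\mathcal{D}^s(I)$, Klein's inequality to control that operator norm by the relative entropy (your chain $\|Q\|_{\CH^{(1+\epsilon)/4}}\le\|\langle\nabla\rangle Q\|_{\FS^2}\lesssim\CH(\gamma_f+Q|\gamma_f)^{1/2}$ is exactly the point the paper leaves implicit when it cites \eqref{eq: Klein's inequality} and the $\CH^{1/2}$-convergence of the approximants), then the Lewin--Sabin regularization scheme and the coercivity $\CF_f\gtrsim\CH$ for the a priori bound. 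Two inessential discrepancies: the paper's Step~0 obtains the $L^2_t\dot H^s_x$ bound by a direct Plancherel/delta-measure computation as in Lemma~\ref{lem: Strichartz estimates for density functions} rather than a $TT^*$ argument, and it works with $s=(3+\epsilon)/4$ rather than your $s=(1+\epsilon)/2$; both choices lie in the admissible range $1/2<s\le 1+\epsilon/2$ for which the free-flow density estimate holds and the Sobolev embedding $H^s(\BR)\hookrightarrow L^\infty$ closes the contraction, so this does not affect the argument.
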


\begin{proof}[Sketch of Proof]
\textbf{(Step 0. Preliminaries)} Repeating the proof of Lemma \ref{lem: Strichartz estimates for density functions}, one can show its 1D analogue,
\begin{equation}\label{eq: 1D density Strichartz}
\|\rho_{e^{it\Delta}\gamma_0e^{-it\Delta}}\|_{\mathcal{D}^{\frac{3+\epsilon}{4}}(I)}\lesssim\|\langle\nabla\rangle^{\frac{1+\epsilon}{4}}\gamma_0\langle\nabla\rangle^{\frac{1+\epsilon}{4}}\|_{\mathfrak{S}^2},
\end{equation}
where $\|g\|_{\mathcal{D}^s(I)}:=\|g\|_{L_t^\infty(I; L_x^2)\cap L_t^2(I; \dot{H}_x^s)}$ and $\|\gamma\|_{\mathcal{H}^\alpha}:=\|\langle\nabla\rangle^\alpha\gamma\langle\nabla\rangle^\alpha\|_{\mathfrak{S}^2}$.\\
\textbf{(Step 1. Local well-posedness)} Let $T>0$ be a small number to be chosen later. For sufficiently small $\epsilon>0$, we set 
$$\mathcal{B}_R^{\frac{1+\epsilon}{4}}(I):= \Ck{(Q,g) : \|Q\|_{C_t(I; \mathcal{H}^{\frac{1+\epsilon}{4}})} + \|g\|_{\mathcal{D}^{\frac{3+\epsilon}{4}}(I)} \le R},$$
where $R\gg \|Q_0\|_{\mathcal{H}^{\frac{1+\epsilon}{4}}} + \|w\|_{\CM_x} + \|\lxr^{\frac{1+\epsilon}{2}} f(|\xi|^2)\|_{L^2_\xi}$ and $I=[-T,T]$, and define $\mathbf{\Phi}=(\Ph_1, \Ph_2)$ by 
$$\left\{\begin{aligned}
&\Ph_1(Q,g)(t):= e^{it\De}Q_0e^{-it\De} - i \int_0^t e^{i(t-t_1)\De}\big[w \ast g, Q+\ga_f\big](t_1) e^{-i(t-t_1)\De} dt_1, \\
&\Ph_2(Q,g)(t):= \rh_{\Ph_1(Q,g)(t)}.
\end{aligned}\right.$$
Then, by the unitarity of the free flow, the density estimate in Step 0, and by symmetry for the inhomogeneous terms, it follows that
$$\begin{aligned}
&\|\Ph_1(Q,g)\|_{C_t(I; \mathcal{H}^{\frac{1+\epsilon}{4}})} + \|\Ph_2(Q,g)\|_{\mathcal{D}^{\frac{3+\epsilon}{4}}(I)}\\
&\lesssim \|Q_0\|_{\mathcal{H}^{\frac{1+\epsilon}{4}}}+\|(w \ast g) Q\|_{L_t^1(I;\mathcal{H}^{\frac{1+\epsilon}{4}})}+\|(w \ast g)\ga_f\|_{L_t^1(I;\mathcal{H}^{\frac{1+\epsilon}{4}})}.
\end{aligned}$$
Note that by the fractional Leibniz rule and the Sobolev inequality, 
$$\begin{aligned}
\|(w \ast g) Q\|_{L_t^1(I;\mathcal{H}^{\frac{1+\epsilon}{4}})}&=\big\|\langle\nabla_x\rangle^{\frac{1+\epsilon}{4}}\langle\nabla_{x'}\rangle^{\frac{1+\epsilon}{4}}\big((w \ast g)(t,x) Q(t,x,x')\big)\big\|_{L_t^1(I;L_{x}^2 L_{x'}^2)}\\
&\lesssim T^{\frac{1}{2}}\|w \ast g\|_{L_t^2 W_x^{4,\frac{1+\epsilon}{4}}}\|\langle\nabla_{x'}\rangle^{\frac{1+\epsilon}{4}}Q\|_{C_t(I;L_{x}^4 L_{x'}^2)}\\
&\quad+T^{\frac{1}{2}}\|w \ast g\|_{L_t^2 L_x^\infty}\|\langle\nabla_x\rangle^{\frac{1+\epsilon}{4}}\langle\nabla_{x'}\rangle^{\frac{1+\epsilon}{4}}Q\|_{C_t(I;L_{x}^2 L_{x'}^2)}\\
&\lesssim T^{\frac{1}{2}}\|w\|_{\CM_x}\|g\|_{\mathcal{D}^{\frac{3+\epsilon}{4}}(I)}\|Q\|_{C_t(I; \mathcal{H}^{\frac{1+\epsilon}{4}})}
\end{aligned}$$
and
$$\begin{aligned}
\|(w \ast g)\ga_f\|_{L_t^1(I;\mathcal{H}^{\frac{1+\epsilon}{4}})}&=\big\|\langle\nabla_x\rangle^{\frac{1+\epsilon}{4}}\langle\nabla_{x'}\rangle^{\frac{1+\epsilon}{4}}\big((w \ast g)(t,x) f(|\xi|^2)^\vee(x-x')\big)\big\|_{L_t^1(I;L_{x}^2 L_{x'}^2)}\\
&\lesssim T^{\frac{1}{2}}\|w\|_{\CM_x}\|g\|_{\mathcal{D}^{\frac{3+\epsilon}{4}}(I)}\|\lxr^{\frac{1+\epsilon}{2}} f(|\xi|^2)\|_{L^2_\xi}.
\end{aligned}$$
Therefore, taking $0<T\ll \frac{1}{R^2}$, we prove that $\mathbf{\Phi}$ maps from $\mathcal{B}_R^{\frac{1+\epsilon}{4}}(I)$ to itself. Similarly estimating the difference, one can show that $\mathbf{\Phi}$ is contractive in $\mathcal{B}_R^{\frac{1+\epsilon}{4}}(I)$. Then, we conclude that the perturbation equation $i\partial_t Q=[-\Delta+w*\rho_Q, Q+\gamma_f]
$ has a unique strong solution in $C_t(I; \mathcal{H}^{\frac{1+\epsilon}{4}})$. In addition, by repeating the same analysis, one can prove the continuity of map $Q_0\mapsto (Q,\rho_Q)$ from $\mathcal{H}^{\frac{1+\epsilon}{4}}$ to $\mathcal{B}_R^{\frac{1+\epsilon}{4}}(I)$.\\
\textbf{(Step 3. Proof of conservation law and global well-posedness)} Once we have the local well-posedness, we can prove the conservation of the relative free energy
$$\mathcal{F}_f(\gamma|\gamma_f)=\mathcal{H}(\gamma|\gamma_f)+\frac{1}{2}\iint_{\mathbb{R}\times\mathbb{R}}w(x-y)\rho_{\gamma-\gamma_f}(x)\rho_{\gamma-\gamma_f}(y)dxdy$$
and global well-posedness following the proof of Lewin and Sabin \cite{LS3}. Indeed, it is already shown that initial data having finite relative entropy can be approximated by a smooth finite-rank operator with the convergence $\|Q_{n,0}-Q_0\|_{\mathcal{H}^\frac{1}{2}}\to 0$ (see \cite[Lemma 10]{LS3}). In 1D, by \eqref{eq: 1D density Strichartz}, this convergence is sufficient to obtain the convergence in density, 
$$\lim_{n\to\infty}\|\rho_{e^{it\Delta}Q_{n,0}e^{-it\Delta}}-\rho_{e^{it\Delta}Q_{0}e^{-it\Delta}}\|_{\mathcal{D}^{\frac{3+\epsilon}{4}}(I)}=0.$$ 
For more details, we refer to \cite[Section 7]{LS3}.
\end{proof}

\section{Global well-posedness in the trace class}\label{sec: global well-posedness in the trace class}
We give a proof of the global well-posedness result for the perturbation equation \eqref{NLH} in the trace class.
Before stating it, we define the amalgam space $\ell^p L^q(\R^3)$ by
\begin{equation}
    \|u\|_{\ell^p L^q}
    := \K{\sum_{\vn \in \BZ^3}\|u(x)\|_{L^q(C_\vn)}^p}^{1/p},
\end{equation}
where $C_\vn = [n_1,n_1+1)\times[n_2,n_2+1)\times[n_3,n_3+1)$ for $\vn = (n_1,n_2,n_3)$.
\begin{proposition}\label{prop:GWP trace}
	Let $w \in \ell^1 L^2(\R^3) \cap L^\I(\R^3)$ and $f \in \ell^1 L^2(\R^3)$. 
	Then, for any $Q_0 \in \FS^1$, there exists a unique global solution $Q(t)\in C(\R;\FS^1)$ to \eqref{NLH} with initial condition $Q(0) = Q_0$ such that $\rh_Q \in L^\I_t(\R;L^1_x)$.
	Moreover, if we further assume that $Q_0 \in \FH^m$, $\sd^m w \in \ell^1 L^2(\R^3) \cap L^\I(\R^3)$ and $\lxr^{m} f \in \ell^1 L^2(\R^3)$ for $m \in \BZ_+$, then $Q(t) \in C(\R;\FH^m)$.
\end{proposition}
\begin{proof}
We assume $t \ge 0$ because we can do the same argument for $t \le 0$. 
\\
\noindent \textbf{Step 1: Local well-posedness in $\FS^1$.}
	Define
	$$\CB_R:=\{Q\in C(I; \FS^1) : \|Q\|_{C(I;\FS^1)} \le R\},$$
	where $R:= 2\|Q_0\|_{\FS^1}$, $I:= [0,T]$ and $T>0$ will be chosen later.
	Let
	$$\Ph[Q](t) := e^{it\De}Q_0e^{-it\De} - \int_0^t e^{i(t-t_1)\De}[w \ast \rh_Q(t_1), Q(t_1) + \ga_f] e^{-i(t-t_1)\De} dt_1.$$
	First, we have
	\begin{equation*}
		\|\Ph[Q]\|_{C(I;\FS^1)} \le \|Q_0\|_{\FS^1} + 2\int_{0}^T \K{\|w\ast \rh_Q(t_1) Q(t_1)\|_{\FS^1} + \|w\ast \rh_Q(t_1)\ga_f\|_{\FS^1}} dt_1.
	\end{equation*}
	On the one hand, since $\|\rh_A\|_{L^1_x} \le \|A\|_{\FS^1}$, we have 
	\begin{align*}
		&\|w\ast \rh_Q(t_1) Q(t_1)\|_{\FS^1}
		\le \|w \ast \rh_Q(t_1)\|_{L^\I_x} \|Q(t_1)\|_{\FS^1} \\
		&\quad \le \|w\|_{L^\I_x} \|\rh_Q(t_1)\|_{L^1_x} \|Q(t_1)\|_{\FS^1}
		\le \|w\|_{L^\I_x} \|Q(t_1)\|_{\FS^1}^2.
	\end{align*}
	On the other hand, Birman-Solomjak's inequality (see \cite{BS1}, \cite{BS2}; see also \cite[Theorem 4.5]{Simon}) implies
	\begin{align*}
		&\|w\ast \rh_Q(t_1)\ga_f\|_{\FS^1} \le \|w\ast \rh_Q(t_1)\|_{\ell^1 L^2} \|f\|_{\ell^1 L^2} \\
		&\quad \ls \|w\|_{\ell^1 L^2} \|\rh_Q(t_1)\|_{L^1_x} \|f\|_{\ell^1 L^2}
		\le \|w\|_{\ell^1 L^2} \|Q(t_1)\|_{\FS^1_x} \|f\|_{\ell^1 L^2}.
	\end{align*}
	Collecting the above all, we obtain
	\begin{equation*}
		\|\Ph[Q]\|_{C(I;\FS^1)} \le R + 2T\|w\|_{L^\I_x}R^2 + 2 T \|w\|_{\ell^1 L^2} \|f\|_{\ell^1 L^2} R \le 2R,
	\end{equation*}
	if we choose sufficiently small $T=T(R,\|w\|_{\ell^1L^2}, \|f\|_{\ell^1 L^2})>0$. 
	Therefore, $\Ph:\CB_R \to \CB_R$ is well-defined.
	By the standard argument, we can prove that $\Ph$ is a contraction map and the solution is unique.
	Moreover, by the standard argument, we have the following blowup alternative:
	If $Q(t) \in C([0,T_\mx); \FS^1)$ is the maximal solution, then
	\begin{equation}\label{eq:blowup alternative}
		T_\mx < \I \implies \lim_{t \uparrow T_\mx} \|Q(t)\|_{\FS^1} = \I.
	\end{equation}
\\
	\noindent \textbf{Step 2: Global well-posedness in $\FS^1$.}
		Now we extend the local solution to the global one by the Gronwall's inequality.
	The local solution satisfies
	\begin{align}
		Q(t) = S_{w \ast \rh_Q}(t)_\st Q_0 - \int_0^t S_{w \ast \rh_Q}(t,t_1)_\st [w \ast \rh_Q(t_1), \ga_f] dt_1.
	\end{align}
	Since $S_V(t,s):L^2_x \to L^2_x$ is unitary, we have
	\begin{align*}
		\|Q(t)\|_{\FS^1}
		\le \|Q_0\|_{\FS^1} + 2 \int_0^t \|w \ast \rh_{Q}(t_1,x) \ga_f\|_{\FS^1} dt_1.
	\end{align*}
	By using Birman-Solomjak's inequality again, we have
	\begin{align*}
		\|Q(t)\|_{\FS^1}
		\le \|Q_0\|_{\FS^1} + C \int_0^t \|Q(t_1)\|_{\FS^1} dt_1,
	\end{align*}
	where $C=C(\|w\|_{\ell^1 L^2}, \|f\|_{\ell^1 L^2})$.
	Since $\|Q(t)\|_{\FS^1}$ cannot blowup in finite time by Gronwall's inequality, we obtain $T_\mx = \I$ from \eqref{eq:blowup alternative}.
\\
	\noindent \textbf{Step 3: Local and Global well-posedness in $\FH^m$.}
	When $\sd^m w \in \ell^1 L^2 (\R^3) \cap L^\I(\R^3)$ and $\lxr^{m} f \in \ell^1 L^2(\R^3)$, we can prove the local well-posedness in $\FH^m$ in the same way as Step 1, so we omit details.
Let $Q_0 \in \FH^m$ and $Q(t) \in C([0,T_\mx);\FH^m)$ be the maximal solution to \eqref{NLH} with $Q(0)=Q_0$.
	Then, we have
	\begin{equation*}
		\|Q(t)\|_{\FH^m} 
		\le \|Q_0\|_{\FH^m}
		+ 2\int_0^t \|w \ast \rh_Q(t_1) Q(t_1) \|_{\FH^m} dt_1
		+ 2 \int_0^t \|w \ast \rh_Q(t_1) \ga_f\|_{\FH^m} dt_1.
	\end{equation*}
	Note that we have
	\begin{equation*}
		\|w \ast \rh_Q(t_1) Q(t_1) \|_{\FH^m}
		\ls \|\sd^m w\|_{L^\I_x} \|Q(t_1)\|_{\FS^1} \|Q(t_1)\|_{\FH^m},
	\end{equation*}
	and Birman-Solomjak's inequality implies implies
	\begin{equation*}
		\|w \ast \rh_Q(t_1) \ga_f \|_{\FH^m}
	\ls \|\sd^m w\|_{\ell^1 L^2}
		\|\lxr^{m} f\|_{\ell^1 L^2}
		\|Q(t_1)\|_{\FS^1}.
	\end{equation*}
	Therefore, we have
	\begin{align*}
		\|Q(t)\|_{\FH^m} 
		&\le \|Q_0\|_{\FH^m}
		+C \int_0^t \|Q(t_1)\|_{\FS^1} dt_1
		+C \sup_{0 \le t_1 \le t} \|Q(t_1)\|_{\FS^1} \int_0^t \|Q(t_1)\|_{\FH^m} dt_1,
	\end{align*}
	and Gronwall's inequality implies $\|Q(t)\|_{\FH^m}$ cannot blowup in finite time.
	Therefore, by the blowup alternative, we obtain $T_\mx = \I$. 
\end{proof}

\end{document}